\DeclareMathOperator{\C}{\mathcal{C}}
\DeclareMathOperator{\GammaL}{\Gamma\mathrm{L}}
\DeclareMathOperator{\Aut}{Aut}
\newtheorem{theorem}{Theorem}[section]
\newtheorem{lemma}[theorem]{Lemma}
\newtheorem{corollary}[theorem]{Corollary}
\newtheorem{proposition}[theorem]{Proposition}
\newtheorem{construction}[theorem]{Construction}
\newtheorem{remark}[theorem]{Remark}
\newcommand{\fqn}{\mathbb{F}_{q^n}}
\newcommand{\F}{{\mathbb F}}
\newcommand{\fq}{{\mathbb F}_{q}}
\newcommand{\PG}{\mathrm{PG}}
\newcommand{\PGL}{\mathrm{PGL}}
\newcommand{\N}{\mathrm{N}}
\title{Clubs and their applications}
\date{}
\author[V. Napolitano]{Vito Napolitano}
\author[O. Polverino]{Olga Polverino}
\author[P. Santonastaso]{Paolo Santonastaso}
\author[F. Zullo]{Ferdinando Zullo}
\address{Vito Napolitano, Olga Polverino, Paolo Santonastaso and Ferdinando Zullo, \textnormal{Dipartimento di Matematica e Fisica, Universit\`a degli Studi della Campania ``Luigi Vanvitelli'', Viale Lincoln, 5, I--\,81100 Caserta, Italy}}
\email{\{vito.napolitano,olga.polverino,paolo.santonastaso,ferdinando.zullo\}@unicampania.it}
\begin{document}

\maketitle

\begin{abstract}
Clubs of rank $k$ are well-celebrated objects in finite geometries introduced by Fancsali and Sziklai in 2006. After the connection with a special type of arcs known as KM-arcs, they renewed their interest.
This paper aims to study clubs of rank $n$ in $\PG(1,q^n)$.
We provide a classification result for $(n-2)$-clubs of rank $n$, we analyze the $\GammaL(2,q^n)$-equivalence of the known subspaces defining clubs, for some of them the problem is then translated in determining whether or not certain scattered spaces are equivalent. Then we find a polynomial description of the known families of clubs via some linearized polynomials. Then we apply our results to the theory of blocking sets, KM-arcs, polynomials and rank metric codes, obtaining new constructions and classification results.
\end{abstract}

\bigskip
{\it AMS subject classification:} 51E20; 51E21; 94B05.

\bigskip
{\it Keywords:} Club; linear set;  linearized polynomial;  KM-arc; Blocking set; rank metric code.

\section{Introduction}

Linear sets have been shown to be a powerful tool in several classification results and constructions of objects in finite geometry and coding theory, such as semifields, blocking sets, translation ovoids, KM-arcs, rank metric codes, etc. For more details on their applications we refer to \cite{LavVdV,Polverino,PZ}. The main topic of this paper regards a special class of linear sets on the projective line called \emph{clubs}.
Let $\Lambda=\PG(V,\fqn)=\PG(1,q^n)$, where $V$ is an $\fqn$-vector space of dimension $2$. 
Let $U$ be an $\fq$-subspace of $V$, then the set
\[ L_U=\{\langle {u} \rangle_{\fqn} \colon {u}\in U\setminus\{{0}\}\} \]
is said to be an $\fq$-\emph{linear set} of rank $\dim_{\fq}(U)$ of $\Lambda$. Another important notion associated with linear sets is those of \emph{weight of a point} $P$ which, roughly speaking, represents a measure of how much of the linear set is contained in the point. We say that a linear set is \emph{scattered} if all of its points have weight one, that is its points share with the linear set the minimum possible. 
The term scattered was introduced for the first time in \cite{BL2000} by Blokhuis and Lavrauw, in which the authors studied this notion in a more general framework. Scattered linear sets recently gained more attention due to their applications in the theory of Maximum Rank Distance codes and more generally to rank metric codes; see \cite{PZ}.
A quite close to the scattered linear sets family is the one of \emph{clubs}. An $i$-\emph{club} of rank $k$ is an $\fq$-linear set $L_U$ of rank $k$ for which all but one of the points have weight one, the remaining one has weight $i$. In the case in which the rank is $n$ we will simply say that $L_U$ is an $i$-\emph{club}.
They were originally introduced by in Fancsali and Sziklai in \cite{FanSzi1} (see also \cite{FanSzi2}) when studying maximal partial $2$-spreads in $\PG(8,q)$. These linear sets renewed their interest when De Boeck and Van de Voorde in \cite{DeBoeckVdV2016} characterized the translation KM-arcs exactly as those that can be described by $i$-clubs in even characteristic and such a connection enabled them to give new constructions and classifications of KM-arcs. Moreover, $i$-clubs also define linear blocking sets of R\'edei type once we look to the $i$-club as the set of determined directions of an affine pointset. Apart for their own interesting combinatorial properties, these pointsets turn out to be also very interesting since they also define Hamming metric codes with \emph{few weights}; see \cite{NPSZ2022codes}.
Furthermore, $i$-clubs have a very natural algebraic description via linearized polynomials, which have been recently investigated in \cite{BMZZ}, under the name of $1$-\emph{fat polynomials}.
In particular, in \cite{BMZZ} it has been proved that \emph{exceptional} $1$-fat polynomials do not exist, which means that the nature of these polynomial heavily rely on the extension field in which we are looking for this kind of polynomials.

\vspace{0.5cm}

In this paper we start by providing a classification for $(n-2)$-clubs in $\PG(1,q^n)$, by first studying a class of linear sets which, up to equivalence, contains all the $(n-2)$-clubs and then using an algebraic result which extends the linear analogue of Vosper's theorem under certain assumptions. Then we study the $\GammaL(2,q^n)$-equivalence of the known subspaces defining clubs (not necessarily $(n-2)$-clubs), in which the scattered spaces play an important role. Then we find linearized polynomials defining the known families of clubs, in particular for the $(n-2)$-clubs we find all of them (up to equivalence). Finally we apply our results to the theory of blocking sets, KM-arcs and rank metric codes, in such a way that we obtain constructions and classification results. In particular, our results imply, as a special case, the classification results of translation KM-arcs of type $q/4$ provided in \cite{DeBoeckVdV2016}, but with an explicit description of the associated clubs.

\vspace{0.5cm}

The paper is organized as follows: in Section \ref{sec:prel} we will briefly describe some of the objects and related results we will use. More precisely, we recall linearized polynomials, linear sets and clubs, dual basis and some other algebraic results. In Section \ref{sec:classn-2}, we first detect a family of linear sets of rank $h+2$ containing a point of weight $h$, then we show that, up to equivalence, all the $(n-2)$-clubs belong to such a family. Therefore, we characterize the linear sets belonging to such a family which turn out to be clubs via an algebraic description which allowed us to use a generalization (under certain assumption) of the linear analogue of the Vosper's theorem. Section \ref{sec:equivalence} is devoted to the study of the $\GammaL(2,q^n)$-equivalence of the known subspaces whose associated linear set defines an $i$-club. Our analysis shows that there is a wide variety of examples. Indeed, for the constructions shown by G\'acs and Weiner in \cite{GW2003} we showed that the number of inequivalent subspaces is related to the number of scattered subspaces in a smaller extension. We also detect some inequivalence subspaces also when considering $(n-2)$-clubs.
In Section \ref{sec:polsoficlub} we describe linearized polynomials that define the known families of clubs and in particular our description is complete when considering $(n-2)$-clubs, answering to an open problem posed in \cite{CsMPq5}.
Section \ref{sec:blocksetsKMarcs} is devoted to the study of blocking sets of R\'edei type naturally associated with $i$-clubs and to translation KM-arcs, for which we are able to provide classification results and non-equivalent constructions, as a consequence of the results described for $i$-clubs.
In Section \ref{sec:rankmetric} we deal with rank metric codes. Using the connection between $q$-systems and linear rank metric codes proved by Randrianarisoa in \cite{Randrianarisoa2020ageometric}, we are able to classify and construct linear rank metric codes with certain parameters. The interest in these codes is again due to the fact that the number of the non zero weights is only three, and for these kind of codes it seems very difficult to find examples.

\section{Preliminaries}\label{sec:prel}

\subsection{Linearized polynomials}

Let $q=p^h$, where $h$ is a positive integer and $p$ is a prime.
A \emph{linearized polynomial} (or a $q$-\emph{polynomial}) over $\F_{q^n}$ is a polynomial of the form
\[f(x)=\sum_{i=0}^t a_i x^{q^i},\]
where $a_i\in \F_{q^n}$ and $t$ is a non-negative integer.
Furthermore, if $a_t \neq 0$ we say that $t$ is the $q$-\emph{degree} of $f(x)$.
The set of all $q$-polynomials over $\F_{q^n}$ will be denoted by $\mathcal{L}_{n,q}$.
A well-known example of linearized polynomial is given by the trace, that is $\mathrm{Tr}_{q^n/q}(x)=x+x^q+\ldots+x^{q^{n-1}}$.

Sheekey in \cite{John}, in connection with linear sets and rank metric codes, gave the following definition.  Let $f(x) \in \mathcal{L}_{n,q}$. Then $f$ is said to be \emph{scattered} if for every $x,y \in \fqn^*$
\[ \frac{f(x)}{x}=\frac{f(y)}{y}\,\, \Leftrightarrow\,\, \frac{y}x\in \fq, \]
or equivalently
\[ \dim_{\fq}(\ker(f(x)-mx))\leq 1, \,\,\,\text{for every}\,\,\, m \in \fqn. \]
The term \emph{scattered} arises from the theory of linear sets; see \cite{BL2000}. Scattered polynomials have attracted a lot of attention because of their connections with several objects, such as maximum distance rank metric codes as we will see later.

For more details on linearized polynomials see \cite{wl}.

\subsection{Linear sets}\label{sec:linset}

In this paper, we deal with linear sets on the projective line and in some cases also on the projective plane.
More precisely, let $V$ be a $r$-dimensional $\fqn$-vector space and let $\Lambda=\PG(V,\fqn)=\PG(r-1,q^n)$. 
Let $U$ be an $\fq$-subspace of $V$, then the set
\[ L_U=\{\langle {u} \rangle_{\fqn} \colon {u}\in U\setminus\{{0}\}\} \]
is said to be an $\fq$-\emph{linear set} of rank $\dim_{\fq}(U)$.
The rank of $L_U$ will also be denoted by $\mathrm{Rank}(L_U)$.
The \emph{weight} of a projective subspace $S=\PG(W,\fqn) \subseteq \Lambda$ in $L_U$ is defined naturally as $w_{L_U}(S)=\dim_{\fq}(U \cap W)$.
%One of the most difficult task when studying linear sets is to establish whether or not two linear sets are $\mathrm{P\Gamma L}(2,q^n)$-equivalent.
%However, there are some necessary conditions which can help to solve this problem when $\dim_{\fq}(U)=n$.
In some cases, linearized polynomials can be used to describe linear sets. Indeed, suppose that $L_U$ is a linear set in $\PG(1,q^n)$ and it has rank $n$, then there exists a $q$-polynomial $f(x) \in \mathcal{L}_{n,q}$ such that $L_U$ is mapped by a collineation of $\mathrm{PGL}(2,q^n)$ into $L_f=L_{U_f}$ where
\[ U_f=\{ (x,f(x)) \colon x \in \fqn\}. \]
%The aforementioned necessary conditions involve the coefficients of $f(x)$, see e.g.\ \cite[Lemma 3.6]{CsMP}.
Nevertheless, it could be hard to find polynomials describing a fixed linear set.

Let now recall some basics relations between the size of a linear set, the number of point of a certain weight and its rank.
If $N_i$ denotes the number of points of $\Lambda$ having weight $i\in \{0,\ldots,k\}$  in $L_U$, the following relations hold:
\begin{equation}\label{eq:card}
    |L_U| \leq \frac{q^k-1}{q-1},
\end{equation}
\begin{equation}\label{eq:pesicard}
    |L_U| =N_1+\ldots+N_k,
\end{equation}
\begin{equation}\label{eq:pesivett}
    N_1+N_2(q+1)+\ldots+N_k(q^{k-1}+\ldots+q+1)=q^{k-1}+\ldots+q+1.
\end{equation}
Moreover, the following holds
\begin{equation}\label{eq:wpointsrank}
    w_{L_U}(P)+w_{L_U}(Q)\leq \mathrm{Rank}(L_U),
\end{equation}
for any $P,Q \in \mathrm{PG}(r-1,q^n)$ with $P\ne Q$.
An $\fq$-linear set $L_U$ is $\PG(1,q^n)$ for which all of its points have weight one is called a \emph{scattered} linear set. If all of its points have weight one except for one which has weight $i$, we call it $i$-\emph{club}.

%As proved in \cite[Theorem 1.2]{DeBeuleVdV} (and for $k=n$ by \cite[Lemma 2.2]{BoPol}), an $\fq$-linear set $L_U$ of rank $k$ in $\PG(1,q^n)$ with at least one point point of weight one has at least $q^{k-1}+1$ points.

By the above relations, it is easy to see that a scattered linear set of rank $k$ has $\frac{q^k-1}{q-1}$ points and an $i$-club of rank $k$ has size $q^{k-1}+\ldots+q^i+1$.

We refer to \cite{LavVdV} and \cite{Polverino} for comprehensive references on linear sets.

\subsection{Known examples of clubs}\label{sec:clubsexamp}

Up to now, very few examples of $i$-clubs are known, which have been found and summarized in \cite{DeBoeckVdV2016}, where the authors investigated such a family of linear sets in connection with KM-arcs; see Section \ref{sec:blocksetsKMarcs}.

The linear set $L_{\mathrm{Tr}_{q^n/q}(x)}$ is an example of $(n-1)$-club and in \cite[Theorem 3.7]{CsMP}, it has been proved that every $(n-1)$-club is $\mathrm{PGL}(2,q^n)$-equivalent to $L_{\mathrm{Tr}_{q^n/q}(x)}$.

A further important example is the following, which extends the above example. Let $n=\ell m$, $i=m(\ell-1)$, $\gcd(s,m)=1$ and $\sigma\colon x \in \fqn \mapsto x^{q^s} \in \fqn$. Then the linear set $L_T$ where 
\begin{equation}\label{eq:firstKMarc}
T(x)=\mathrm{Tr}_{q^{\ell m}/q^m}\circ \sigma(x) \in \mathcal{L}_{n,q}
\end{equation}
is an $i$-club in $\mathrm{PG}(1,q^{n})$ with $i=m(\ell-1)$.
Apart from these examples, polynomials defining $i$-clubs are not known except for \cite[Corollary 5.5]{BMZZ} for $n=4$ (see also \cite{CsZ2018}) and \cite[Corollary 6.3]{BMZZ}.
However, in \cite{DeBoeckVdV2016} two other examples of clubs were described.

\begin{construction}\label{constr:(n-2)clubpolynomial}
In \cite[Lemma 2.12]{DeBoeckVdV2016} it was proved that for any $\lambda \in \fqn^*$ such that $\{1,\lambda,\ldots,\lambda^{n-1}\}$ is an $\fq$-basis of $\fqn$, the $\fq$-linear set
\begin{equation}\label{eq:exKMnopol} L_{\lambda}=\{ \langle (t_1\lambda+\ldots+t_{n-1}\lambda^{n-1},t_{n-1}+t_n \lambda) \rangle_{\fqn} \colon (t_1,\ldots,t_n)\in \fq^n\setminus\{\mathbf{0}\} \},
\end{equation}
is an $(n-2)$-club of rank $n$ in $\PG(1,q^n)$.
\end{construction}

\begin{construction}\label{constr:clubviascattered}
In \cite[Lemma 3.6]{DeBoeckVdV2016} was detected the following $i$-clubs.
Let $n=rt$, $t,r>1$ and $f$ a scattered polynomial in $\mathcal{L}_{t,q}$. Let 
\begin{equation}\label{eq:Ua,b}U_{a,b}=\left\{ \left(f(x_0)-ax_0,bx_0+\sum_{i=1}^{r-1}x_i \omega^i\right) \colon x_i \in \F_{q^t} \right\},\end{equation}
for some fixed $a,b \in \F_{q^t}$ with $b\ne 0$ and $\{1,\omega,\ldots,\omega^{r-1}\}$ an $\F_{q^t}$-basis of $\F_{q^n}$. Then $L_{U_{a,b}}$ is an $i$-club, with
\[ i=\left\{ \begin{array}{ll} t(r-1), & \text{if}\,\, f(x)-ax\,\, \text{is invertible over}\,\,\F_{q^t},\\
t(r-1)+1, & \text{otherwise}.\end{array} \right. \]
\end{construction}

\subsection{Algebraic preliminaries}

In this section we will first recall the notion of dual basis and properties, then we will briefly describe the linear analogue of Cauchy-Davenport inequality and Vosper's theorem and some related results.

We start by recalling that the trace function from $\fqn$ over $\fq$ is a linear map that defines a nondegenerate symmetric bilinear form as follows:
\[
(a,b) \in \F_{q^n} \times \F_{q^n} \mapsto \mathrm{Tr}_{q^n/q}(ab) \in \F_q.
\]
This allows us to give the following definition.
Two ordered $\F_{q}$-bases $\mathcal{B}=(\xi_0,\ldots,\xi_{n-1})$ and $\mathcal{B}^*=(\xi_0^*,\ldots,\xi_{n-1}^*)$ of $\F_{q^n}$ are said to be \emph{dual bases} if 
\[
\mathrm{Tr}_{q^n/q}(\xi_i \xi_j^*)= 
\left\{\begin{array}{ll} 
1 & \text{if } i=j,\\
0 & \text{if } i\ne j.
\end{array}\right. 
\] 
For any $\F_q$-basis $\mathcal{B}=(\xi_0,\ldots,\xi_{n-1})$ there exists a unique dual basis $\mathcal{B}^*=(\xi_0^*,\ldots,\xi_{n-1}^*)$ of $\mathcal{B}$, see e.g.\ \cite[Definition 2.30]{lidl_finite_1997}. 

When $\mathcal{B}$ is a polynomial basis, then using the minimal polynomial of the element defining the polynomial basis over $\fq$, we can easily construct the relative dual basis.

\begin{corollary}\label{cor:dualbasis}\cite[Corollary 2.7]{NPSZ2022complwei}
Let $\lambda \in \fqn$ such that $\mathcal{B}=(1,\lambda,\ldots,\lambda^{n-1})$ is an ordered $\fq$-basis of $\fqn$.
Let $f(x)=a_0+a_1x+\ldots+a_{n-1}x^{n-1}+x^n$ be the minimal polynomial of $\lambda$ over $\fq$. 
Then the dual basis $\mathcal{B}^*$ of $\mathcal{B}$ is 
\[ \mathcal{B}^*=(\delta^{-1}\gamma_0,\ldots,\delta^{-1}\gamma_{n-1}), \]
where $\delta=f'(\lambda)$ and $\gamma_i=\sum_{j=1}^{n-i} \lambda^{j-1}a_{i+j}$, for every $i \in \{0,\ldots,n-1\}$. 
\end{corollary}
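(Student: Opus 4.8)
The plan is to verify directly that the proposed system $\mathcal{B}^*$ satisfies the defining trace relations of the dual basis, namely $\mathrm{Tr}_{q^n/q}(\lambda^i\,\delta^{-1}\gamma_j)=\delta_{ij}$ for all $i,j\in\{0,\ldots,n-1\}$. The starting point is the classical Lagrange interpolation description of the trace-dual of a polynomial basis. Denote by $\lambda_1=\lambda,\lambda_2,\ldots,\lambda_n$ the conjugates of $\lambda$ over $\fq$, i.e.\ the roots of $f$, with $\lambda_k=\lambda^{q^{k-1}}$; these are pairwise distinct since $f$ is irreducible, hence separable. For each $k$ set
\[ L_k(x)=\frac{f(x)}{(x-\lambda_k)\,f'(\lambda_k)}\in\fqn[x], \]
the $k$-th Lagrange basis polynomial for the nodes $\lambda_1,\ldots,\lambda_n$, so that $L_k(\lambda_l)=\delta_{kl}$ (the Kronecker symbol, not to be confused with the scalar $\delta=f'(\lambda)$) and $\deg L_k\le n-1$. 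Applying Lagrange interpolation to the polynomial $x^i$, which has degree $\le n-1$, at these nodes yields the identity $x^i=\sum_{k=1}^n\lambda_k^i\,L_k(x)$ for every $i\in\{0,\ldots,n-1\}$.

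The key step is to convert a comparison of coefficients into a trace. Writing $L_k(x)=\sum_{j=0}^{n-1}c_{k,j}x^j$ and comparing the coefficient of $x^j$ in the identity above gives $\sum_{k=1}^n\lambda_k^i c_{k,j}=\delta_{ij}$. Now, since $f$ and $f'$ have coefficients in $\fq$, applying the Frobenius automorphism $\sigma\colon t\mapsto t^q$ to the coefficients of $L_1(x)$ sends it to $L_2(x)$, as it only replaces $\lambda$ by $\lambda^q=\lambda_2$; iterating shows $c_{k,j}=c_{1,j}^{q^{k-1}}$. Consequently $\lambda_k^i c_{k,j}=(\lambda^i c_{1,j})^{q^{k-1}}$, and summing over the whole Frobenius orbit collapses the left-hand side to a trace:
\[ \mathrm{Tr}_{q^n/q}(\lambda^i c_{1,j})=\sum_{k=1}^n(\lambda^i c_{1,j})^{q^{k-1}}=\delta_{ij}. \]
Thus $(c_{1,0},\ldots,c_{1,n-1})$ is exactly the dual basis of $\mathcal{B}$, and by uniqueness of the dual basis it suffices to identify these coefficients with $\delta^{-1}\gamma_j$.

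It remains to compute the $c_{1,j}$ explicitly, which is the routine part. Since $\lambda$ is a root, $f(x)=(x-\lambda)g(x)$ with $g(x)=f(x)/(x-\lambda)=\sum_{i=0}^{n-1}g_i x^i$, so that $L_1(x)=\delta^{-1}g(x)$ and hence $c_{1,i}=\delta^{-1}g_i$. Performing the division by $(x-\lambda)$ via Horner's scheme gives $g_{n-1}=a_n=1$ together with the recurrence $g_{i-1}=a_i+\lambda g_i$, whose closed form is $g_i=\sum_{j=1}^{n-i}\lambda^{j-1}a_{i+j}=\gamma_i$. Substituting back yields $c_{1,i}=\delta^{-1}\gamma_i$, which is precisely the claimed basis $\mathcal{B}^*$.

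I expect the main obstacle to be the second paragraph: setting up the Lagrange identity and then recognising that summation over the Frobenius orbit is exactly what turns the coefficientwise relation $\sum_k\lambda_k^i c_{k,j}=\delta_{ij}$ into the trace condition defining duality. Once that reformulation is in place, the explicit evaluation of the coefficients through synthetic division is entirely mechanical.
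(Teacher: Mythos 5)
Your proof is correct. Note, however, that this paper does not prove the statement at all: it is quoted as \cite[Corollary 2.7]{NPSZ2022complwei}, so there is no internal proof to compare against. Your argument is the standard classical derivation of the trace-dual of a polynomial basis (cf.\ \cite{lidl_finite_1997}): the Lagrange/Euler identity $x^i=\sum_{k}\lambda_k^i\,f(x)/\bigl((x-\lambda_k)f'(\lambda_k)\bigr)$ for $0\le i\le n-1$, the observation that the Lagrange coefficients at conjugate nodes form a single Frobenius orbit so that coefficient comparison collapses to the trace conditions $\mathrm{Tr}_{q^n/q}(\lambda^i c_{1,j})=\delta_{ij}$, and the Horner recursion identifying $f(x)/(x-\lambda)=\sum_i\gamma_i x^i$ with $\gamma_i=\sum_{j=1}^{n-i}\lambda^{j-1}a_{i+j}$. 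All three steps check out (in particular $g_{n-1}=1$, $g_{i-1}=a_i+\lambda g_i$ gives exactly the stated $\gamma_i$, and separability of the minimal polynomial guarantees the $n$ nodes are distinct), so the verification is complete and is in substance the argument underlying the cited reference.
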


In the case in which the minimal polynomial of $\lambda$ has degree two or three, the dual basis has even a simpler description.

\begin{corollary}\label{cor:binetrin}\cite[Corollary 2.9]{NPSZ2022complwei}
Let $\lambda \in \fqn$ such that $\mathcal{B}=(1,\lambda,\ldots,\lambda^{n-1})$ is an ordered $\fq$-basis of $\fqn$.
\begin{itemize}
    \item If $f(x)=x^n-d$ is the minimal polynomial of $\lambda$ over $\fq$, then the dual basis of $\mathcal{B}$ is
    \[ \mathcal{B}^*=\left(\frac{\lambda^n}{nd},\frac{\lambda^{n-1}}{nd},\ldots,\frac{\lambda}{nd},\frac{1}{nd}\right). \]
    \item If $f(x)=x^n-cx^k-1$ is the minimal polynomial of $\lambda$ over $\fq$, then the dual basis of $\mathcal{B}$ is
    \[ \mathcal{B}^*=(\delta^{-1}\lambda^{k-1},\delta^{-1}\lambda^{k-2},\ldots,\delta^{-1},\delta^{-1}\lambda^{n-1},\ldots,\delta^{-1}\lambda^k), \]
    where $\delta=\frac{n\lambda^{n-1}-ck\lambda^{k-1}}{-c+\lambda^{n-k}}$.
\end{itemize}
\end{corollary}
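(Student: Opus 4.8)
The plan is to obtain both formulas as immediate specializations of Corollary~\ref{cor:dualbasis}: in each case I read off the coefficients $a_0,\ldots,a_{n-1}$ of the given minimal polynomial, substitute them into $\delta=f'(\lambda)$ and $\gamma_i=\sum_{j=1}^{n-i}\lambda^{j-1}a_{i+j}$, and then simplify the resulting quantities using the relation $f(\lambda)=0$.

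For the binomial $f(x)=x^n-d$ one has $a_0=-d$, the leading coefficient $a_n=1$, and all intermediate coefficients zero. Hence $\delta=f'(\lambda)=n\lambda^{n-1}$, and in each sum $\gamma_i=\sum_{j=1}^{n-i}\lambda^{j-1}a_{i+j}$ the only surviving term is the one with $i+j=n$, so that $\gamma_i=\lambda^{n-i-1}$ for every $i$. Therefore $\delta^{-1}\gamma_i=\lambda^{n-i-1}/(n\lambda^{n-1})=1/(n\lambda^{i})$, and using $\lambda^n=d$ this equals $\lambda^{n-i}/(nd)$, which is precisely the entry claimed for the dual basis. This part is a routine substitution with no real obstacle.

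For the trinomial $f(x)=x^n-cx^k-1$ the nonzero coefficients are $a_0=-1$, $a_k=-c$, and $a_n=1$, giving $f'(\lambda)=n\lambda^{n-1}-ck\lambda^{k-1}$. The computation of $\gamma_i$ now splits according to whether the index $k$ is reached inside the range of summation: for $0\le i\le k-1$ both $a_k$ and $a_n$ contribute and one gets $\gamma_i=-c\lambda^{k-i-1}+\lambda^{n-i-1}=\lambda^{k-i-1}(-c+\lambda^{n-k})$, whereas for $k\le i\le n-1$ only $a_n$ contributes and $\gamma_i=\lambda^{n-i-1}$. I would then absorb the common factor $-c+\lambda^{n-k}$ appearing in the first range into the normalisation, i.e.\ set $\delta=f'(\lambda)/(-c+\lambda^{n-k})$ exactly as in the statement; with this $\delta$ the first $k$ entries become $\delta^{-1}\lambda^{k-1-i}$, matching the initial block $\delta^{-1}\lambda^{k-1},\ldots,\delta^{-1}$ of the asserted basis.

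The main obstacle---and the only genuinely nontrivial step---is to check that the remaining entries, those with $k\le i\le n-1$, also collapse to the uniform shape $\delta^{-1}\lambda^{n-1-i+k}$ required by the statement. Unwinding the rescaling, the claimed identity $f'(\lambda)^{-1}\lambda^{n-i-1}=\delta^{-1}\lambda^{n-1-i+k}$ is equivalent to $\lambda^{n-i-1}=\lambda^{n-1-i+k}(-c+\lambda^{n-k})$, which after cancelling $\lambda^{n-1-i}$ reduces to $\lambda^n-c\lambda^k=1$, i.e.\ to $f(\lambda)=0$. Invoking this defining relation finishes the verification and shows that the powers $\lambda^0,\ldots,\lambda^{n-1}$ each occur exactly once (scaled by $\delta^{-1}$), so the listed $n$-tuple is indeed the dual basis $\mathcal{B}^*$.
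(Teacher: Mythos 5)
Your proposal is correct, and it follows what is clearly the intended derivation: the paper states this corollary without proof (citing \cite[Corollary 2.9]{NPSZ2022complwei}), and the natural route is exactly yours, namely specializing Corollary \ref{cor:dualbasis} to the binomial and trinomial cases and simplifying with $f(\lambda)=0$; both of your case computations (the single surviving term $\gamma_i=\lambda^{n-i-1}$ in the binomial case, and the split at index $k$ with the factor $-c+\lambda^{n-k}$ absorbed into $\delta$ in the trinomial case) check out. One small point worth noting: your computation gives the $n$ entries $\lambda^{n-i}/(nd)$ for $i=0,\ldots,n-1$, i.e.\ the tuple ending at $\lambda/(nd)$, whereas the displayed statement lists $n+1$ powers (from $\lambda^n$ down to $\lambda^0$); this is a typo in the quoted statement, and your derivation identifies the correct tuple.
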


We also need the following result, which states that every hyperplane in $\F_{q^n}$ admits a polynomial basis.

\begin{proposition} \label{lemm:formhyperplanes}
Let $S$ be an $(n-1)$-dimensional $\F_{q}$-subspace of $\F_{q^n}$. Let $\gamma \in \F_{q^n}^*$ such that $\F_{q}(\gamma)=\F_{q^n}$. Then there exists $c \in \F_{q^n}^*$ such that $S=c \langle 1,\gamma,\ldots,\gamma^{n-2} \rangle _{\F_q}$.
\end{proposition}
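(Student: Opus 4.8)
The plan is to argue by an orbit--stabilizer count on the set of $\fq$-hyperplanes of $\fqn$. Write $H=\langle 1,\gamma,\ldots,\gamma^{n-2}\rangle_{\fq}$. Since $\fq(\gamma)=\fqn$ the element $\gamma$ has degree $n$ over $\fq$, so $1,\gamma,\ldots,\gamma^{n-1}$ are $\fq$-linearly independent; hence $H$ is an $(n-1)$-dimensional $\fq$-subspace, i.e.\ a hyperplane. The multiplicative group $\fqn^*$ acts on the set $\mathcal H$ of $\fq$-hyperplanes of $\fqn$ by $c\cdot W = cW$, because multiplication by $c\in\fqn^*$ is an $\fq$-linear bijection of $\fqn$ and thus sends hyperplanes to hyperplanes. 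The statement to be proved is exactly the assertion that the orbit of $H$ under this action is all of $\mathcal H$. Since $|\mathcal H|=\frac{q^n-1}{q-1}$ (there are as many hyperplanes as points), by the orbit--stabilizer theorem it suffices to prove that the stabilizer of $H$ equals $\fq^*$: then the orbit has size $\frac{q^n-1}{q-1}=|\mathcal H|$ and, being a subset of $\mathcal H$ of full cardinality, must coincide with $\mathcal H$.

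The heart of the argument is therefore the computation of the stabilizer, and this is the step I expect to be the main obstacle. Consider $R=\{a\in\fqn : aH\subseteq H\}$. I claim $R$ is a subfield of $\fqn$ containing $\fq$. Indeed $R$ is an $\fq$-subspace (if $a,b\in R$ then $(a+b)H\subseteq aH+bH\subseteq H$, and $\fq$-scalars clearly preserve $R$), it is closed under multiplication (if $a,b\in R$ then $(ab)H=a(bH)\subseteq aH\subseteq H$), and it contains $1$; hence it is a subring of $\fqn$. Being a finite integral domain, $R$ is a field, so $R=\F_{q^d}$ for some $d\mid n$. Observe that for $a\neq 0$ the inclusion $aH\subseteq H$ already forces $aH=H$ by a dimension count, so that $R\setminus\{0\}$ is precisely the stabilizer of $H$.

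Finally, I would exploit that $R\cdot H\subseteq H$ makes $H$ a vector space over $R=\F_{q^d}$. Consequently $\dim_{\fq}H=n-1$ is a multiple of $d=[\F_{q^d}:\fq]$, so $d\mid(n-1)$; together with $d\mid n$ this gives $d\mid\gcd(n,n-1)=1$, whence $d=1$ and $R=\fq$. Therefore the stabilizer of $H$ is $\fq^*$, the orbit of $H$ fills $\mathcal H$, and every $(n-1)$-dimensional $\fq$-subspace $S$ can be written as $S=cH$ for some $c\in\fqn^*$, as required.
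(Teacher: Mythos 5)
Your proof is correct, but it takes a genuinely different route from the paper. The paper's own proof is very short: it observes that $T=\langle 1,\gamma,\ldots,\gamma^{n-2}\rangle_{\F_q}$ is a hyperplane and then simply cites a result from Ionin--Shrikhande (\emph{Combinatorics of symmetric designs}, Proposition 3.5.16) asserting that every hyperplane of $\F_{q^n}$ has the form $cT$; the transitivity of the multiplicative action is thus outsourced to the literature (where it is usually proved via the trace-duality description of hyperplanes, $S=\{x:\mathrm{Tr}_{q^n/q}(\beta x)=0\}$, on which $\F_{q^n}^*$ visibly acts transitively). You instead prove this transitivity from scratch by an orbit--stabilizer count: the key new ingredient is your identification of the multiplicative stabilizer $\{a\in\F_{q^n}^*: aW=W\}$ of \emph{any} hyperplane $W$ as $R\setminus\{0\}$ for a subfield $R=\F_{q^d}$ with $d\mid n$, combined with the observation that $W$ is an $R$-vector space, forcing $d\mid n-1$ and hence $d=\gcd(n,n-1)=1$. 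Note that your argument never uses the polynomial form of $H$ beyond the fact that it is $(n-1)$-dimensional (which is where the hypothesis $\F_q(\gamma)=\F_{q^n}$ enters), so you have in fact proved the stronger, citation-free statement that $\F_{q^n}^*$ acts transitively on the set of all $\F_q$-hyperplanes of $\F_{q^n}$, with each stabilizer exactly $\F_q^*$. What each approach buys: the paper's proof is two lines at the cost of an external reference; yours makes the result self-contained and elementary, and the subfield-plus-$\gcd$ trick is in the same spirit as arguments the paper uses elsewhere (e.g.\ in Lemma \ref{lemma:power} and Theorem \ref{th:classclass}).
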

\begin{proof}
Since $\{1,\gamma,\ldots,\gamma^{n-1}\}$ is an $\fq$-basis of $\F_{q^n}$, then the $\fq$-subspace $T=\langle 1,\gamma,\ldots,\gamma^{n-2}\rangle_{\fq}$ is an hyperplane of $\fqn$. By \cite[Propositon 3.5.16]{combbook}, for any hyperlane $S$ of $\fqn$ there exists $c \in \F_{q^n}^*$ such that $S=cT$ and the assertion is proved.
\end{proof}

We now recall some algebraic results which will be used in the rest of the paper. The first two correspond to the linear analogue of Cauchy-Davenport inequality and to the linear analogue of the Vosper's Theorem.
Let denote by $S \cdot T$ the set of the products of any two elements in the subset $S$ and $T$ of $\fqn$.

\begin{theorem} \cite[Theorem 3]{BSZ2015} \label{teo:bachocserrazemor}
Let $S$ be an $\F_q$-subspace of $\F_{q^n}$. Then
\begin{itemize}
    \item[i)] either for every $\F_q$-subspace $T \subseteq \F_{q^n}$ we have
    \[
    \dim_{\F_q}(\langle S\cdot T \rangle_{\F_q})\geq \min\{ \dim_{\F_q}(S)+\dim_{\F_q}(T)-1,n\},
    \]
    \item[ii)] or there exists a positive integer $t>1$ that divides $n$, such that for every $\F_q$-subspace $T \subseteq \F_{q^n}$ satisfying
    \[
    \dim_{\F_q}(\langle S\cdot T \rangle_{\F_q})< \dim_{\F_q}(S)+\dim_{\F_q}(T)-1,
    \]
    we have that $\langle S \cdot T \rangle_{\F_q}$ is also an $\F_{q^t}$-subspace.
\end{itemize}
\end{theorem}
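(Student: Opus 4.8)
The plan is to view this as the multiplicative analogue of Kneser's addition theorem: first establish the underlying Kneser-type dimension inequality together with the relevant stabiliser, and then read off the stated dichotomy. For an $\fq$-subspace $A\subseteq\fqn$ write $\mathrm{Stab}(A)=\{\lambda\in\fqn:\lambda A\subseteq A\}$. Since $A$ is finite and $\mathrm{Stab}(A)$ is an $\fq$-subalgebra of the field $\fqn$ containing $1$, it is a finite integral domain, hence a subfield $\F_{q^t}$ with $t\mid n$; moreover $A$ is then an $\F_{q^t}$-vector space. Applying this to $A=\langle S\cdot T\rangle_{\fq}$ attaches to every product a canonical intermediate field. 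I record the elementary but crucial fact that $\mathrm{Stab}(S)\subseteq\mathrm{Stab}(\langle S\cdot T\rangle_{\fq})$ for every $T$: if $\lambda S\subseteq S$ then $\lambda(st)=(\lambda s)t\in S\cdot T$ for all $s\in S$ and $t\in T$, so $\lambda$ preserves the span.

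The main imported tool is the linear analogue of Kneser's theorem for the (separable, indeed Galois) extension $\fqn/\fq$: writing $H=\mathrm{Stab}(\langle S\cdot T\rangle_{\fq})$, one has
\[ \dim_{\fq}\langle S\cdot T\rangle_{\fq}\ \ge\ \dim_{\fq}\langle S\cdot H\rangle_{\fq}+\dim_{\fq}\langle T\cdot H\rangle_{\fq}-\dim_{\fq}H\ \ge\ \dim_{\fq}S+\dim_{\fq}T-\dim_{\fq}H. \]
I would prove this inequality by induction on $\dim_{\fq}T$ using a linearised Dyson/$e$-transform that redistributes dimensions between $S$ and $T$ without enlarging the product span, the separability of $\fqn/\fq$ guaranteeing that no inseparability defect appears; this step is the technical engine and I expect it to be the most laborious part. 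Two consequences are immediate. If the product stabiliser is trivial, i.e.\ $H=\fq$, then $\dim_{\fq}\langle S\cdot T\rangle_{\fq}\ge\dim_{\fq}S+\dim_{\fq}T-1$, and since always $\dim_{\fq}\langle S\cdot T\rangle_{\fq}\le n$, the bound of (i) holds for that $T$. Hence a violation of (i) forces $H\neq\fq$, so each ``bad'' $T$ already makes $\langle S\cdot T\rangle_{\fq}$ a nontrivial field-subspace. Moreover, by the fact recorded above, if $\mathrm{Stab}(S)=\F_{q^t}$ with $t>1$ then every product---in particular every bad one---is an $\F_{q^t}$-subspace, and we land in case (ii) with this $t$.

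It remains to treat $\mathrm{Stab}(S)=\fq$ and to upgrade ``a field for each bad $T$'' to a single $t$ valid for all of them; this is the genuine heart of the statement. One must rule out, for one and the same $S$, the simultaneous occurrence of two bad subspaces whose product stabilisers are $\F_{q^{t_1}}$ and $\F_{q^{t_2}}$ with $\gcd(t_1,t_2)=1$, for then no common $t>1$ could work. The strategy is to feed the strict failure $\dim_{\fq}\langle S\cdot T\rangle_{\fq}<\dim_{\fq}S+\dim_{\fq}T-1$ back into the refined inequality to obtain the rigidity bound $(\dim_{\fq}\langle S\cdot H\rangle_{\fq}-\dim_{\fq}S)+(\dim_{\fq}\langle T\cdot H\rangle_{\fq}-\dim_{\fq}T)\le\dim_{\fq}H-2$, which pins $S$ very close to an $H$-subspace (when the product stabiliser is a minimal intermediate field $\F_{q^\ell}$, $\ell$ prime, the $S$-deficiency is at most $\ell-2$, which for $\ell=2$ forces $S$ itself to be an $\F_{q^2}$-subspace). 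Pushing this rigidity---together with the lattice of intermediate fields of $\fqn/\fq$ and the containment $\mathrm{Stab}(S)\subseteq H$---one aims to show that all bad product stabilisers lie above a common minimal field $\F_{q^t}$ determined by $S$; equivalently, two bad stabilisers of coprime degree would force $\mathrm{Stab}(S)$ to be nontrivial, contradicting $\mathrm{Stab}(S)=\fq$. Making this cross-prime rigidity argument precise is, I expect, the main obstacle, and it is exactly the point at which the full strength of the Kneser-type theorem for field extensions is required.
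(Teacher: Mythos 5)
Your attempt is not measured here against an in-paper argument, because the paper gives none: this statement is imported verbatim from \cite{BSZ2015}. So your proposal has to stand as a self-contained proof of that cited theorem, and it does not. The refined inequality you call the ``technical engine'' --- $\dim_{\F_q}\langle S\cdot T\rangle_{\F_q}\geq \dim_{\F_q}\langle S\cdot H\rangle_{\F_q}+\dim_{\F_q}\langle T\cdot H\rangle_{\F_q}-\dim_{\F_q}(H)$ with $H$ the stabiliser of the product span --- is essentially the main theorem of the very paper being cited (Hou--Leung--Xiang in the separable case, which covers $\F_{q^n}/\F_q$, and Bachoc--Serra--Z\'emor in general). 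Your one-sentence plan to prove it ``by induction on $\dim_{\F_q}(T)$ using a linearised Dyson/$e$-transform'' is not a proof, and there is good reason to doubt that route: linearised $e$-transforms do yield Cauchy--Davenport/prime-degree statements, but no known proof of the stabiliser (Kneser-type) version proceeds this way; the existing proofs use genuinely different machinery. Importing the inequality as a black box would be legitimate --- that is exactly what the paper under review does --- but then the remaining derivation must be complete.

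It is not, and this is the decisive gap: the upgrade from ``each bad $T$ has some product stabiliser $\F_{q^{t(T)}}$ with $t(T)>1$'' to the single uniform $t>1$ of case (ii) --- which you correctly identify as the genuine content of the statement --- is left as an unexecuted plan (``one aims to show\ldots'', ``the main obstacle''). Worse, the plan is logically insufficient even if carried out. You assert that ruling out two bad subspaces whose product stabilisers have coprime degrees is \emph{equivalent} to all bad stabilisers lying above a common field $\F_{q^t}$, $t>1$. The implication you need fails: the subfields of $\F_{q^n}$ form the divisor lattice of $n$, and pairwise non-coprimality does not produce a common divisor. For instance, with $n=30$, bad stabiliser degrees $6$, $10$, $15$ would be pairwise non-coprime ($\gcd$'s $2$, $3$, $5$) yet $\gcd(6,10,15)=1$, so no single $t>1$ would serve. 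Hence a correct proof must establish a stronger, global statement --- e.g.\ that the intersection of the stabilisers of \emph{all} bad products properly contains $\F_q$ --- and neither your rigidity bound nor your (correct) partial observations (stabilisers are subfields $\F_{q^t}$ with $t\mid n$; $\mathrm{Stab}(S)\subseteq\mathrm{Stab}(\langle S\cdot T\rangle_{\F_q})$; trivial product stabiliser implies the bound in (i)) delivers it.
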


The pairs $(S,T)$ satisfying the equality in i) of Theorem \ref{teo:bachocserrazemor} with 
\[
    \dim_{\F_q}(\langle S\cdot T \rangle_{\F_q})= \dim_{\F_q}(S)+\dim_{\F_q}(T)-1,
    \]
are called \emph{critical pairs}. The classification of critical pairs is a hard problem in general and few results are known. The next result classify them when $n$ is a prime number.

\begin{theorem} \cite[Theorem 3]{BSZ2017} \label{teo:bachocserrazemor2}
Suppose $n$ is a prime number. Let $S,T$ be $\F_q$-subspaces of $\F_{q^n}$ such that $2 \leq \dim_{\F_q}(S),\dim_{\F_q}(T)$ and $\dim_{\F_q}(\langle S \cdot T \rangle_{\fq}) \leq n-2$. If
\[
\dim_{\F_q}(\langle S \cdot T \rangle_{\fq}) =\dim_{\F_q}(S)+\dim_{\F_q}(T)-1,
\]
then $S=g \langle 1,a,\ldots,a^{\dim_{\F_q}(S)-1}\rangle_{\F_q}$ and $T=g' \langle 1,a,\ldots,a^{\dim_{\F_q}(T)-1}\rangle_{\F_q}$, for some $g,g',a \in \F_{q^n}$.
\end{theorem}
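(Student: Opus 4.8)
The plan is to lean on the primality of $n$ in two complementary ways. First, the only divisor $t>1$ of $n$ is $n$ itself, so in Theorem \ref{teo:bachocserrazemor} alternative (ii) would force $\langle S\cdot T\rangle$ to be an $\F_{q^n}$-subspace, i.e.\ $\{0\}$ or all of $\F_{q^n}$; both are excluded by $0<\dim_{\F_q}\langle S\cdot T\rangle\le n-2$. Hence the linear Cauchy--Davenport bound $\dim_{\F_q}\langle S\cdot W\rangle\ge\min\{\dim_{\F_q}(S)+\dim_{\F_q}(W)-1,\,n\}$ holds for \emph{every} subspace $W$, with no exceptional subfield behaviour. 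Second, every $a\in\F_{q^n}\setminus\F_q$ has minimal polynomial of degree $n$ and so generates $\F_{q^n}$ over $\F_q$; this is the feature that will let me detect geometric progressions.

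The technical heart will be the following Key Lemma, which I would prove by induction on $d=\dim_{\F_q}(W)$: if $a$ generates $\F_{q^n}/\F_q$ and $0<\dim_{\F_q}(W)<n$ satisfies $\dim_{\F_q}(W+aW)=\dim_{\F_q}(W)+1$, then $W=c\langle 1,a,\dots,a^{d-1}\rangle$ for some $c\in\F_{q^n}^*$. Setting $W_0=W\cap a^{-1}W$ one has $aW_0=W\cap aW$ of dimension $d-1$ and $W_0+aW_0\subseteq W$; Theorem \ref{teo:bachocserrazemor} gives $\dim_{\F_q}(W_0+aW_0)\ge d$, forcing equality, so $W_0$ inherits the hypothesis, is a progression by induction, and $W=W_0+aW_0=c\langle 1,\dots,a^{d-1}\rangle$. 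Since any $2$-dimensional subspace is automatically of the form $g\langle 1,a\rangle$, this lemma already settles the case $\dim_{\F_q}(S)=2$ (and, symmetrically, $\dim_{\F_q}(T)=2$).

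For the general critical pair, write $V=\langle S\cdot T\rangle$ and build a flag $S_1\subset\cdots\subset S_k=S$ with $\dim_{\F_q}(S_j)=j$. For $j<k$ one has $\langle S_j\cdot T\rangle\subsetneq V$, so some $s\in S$ satisfies $sT\not\subseteq\langle S_j\cdot T\rangle$, producing a \emph{strict} increase; as the total increase from $\dim_{\F_q}\langle S_1\cdot T\rangle=\dim_{\F_q}(T)$ to $\dim_{\F_q}\langle S_k\cdot T\rangle=\dim_{\F_q}(T)+k-1$ equals $k-1$ across $k-1$ strict steps, every step increases the dimension by exactly $1$. In particular $\dim_{\F_q}\langle S_2\cdot T\rangle=\dim_{\F_q}(T)+1$, so writing $S_2=g\langle 1,a\rangle$ the Key Lemma applied to $T$ yields $T=g'\langle 1,a,\dots,a^{\dim_{\F_q}(T)-1}\rangle$ for this particular generator $a$. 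Using this, $V=g'\bigl(S+aS+\cdots+a^{\dim_{\F_q}(T)-1}S\bigr)$; putting $W_j=\sum_{i=0}^{j}a^iS$, the identity $W_{j+1}=W_j+aW_j$ shows that multiplication by $a$ induces a surjection $W_j/W_{j-1}\twoheadrightarrow W_{j+1}/W_j$, so the increments $\dim_{\F_q}(W_j)-\dim_{\F_q}(W_{j-1})$ are non-increasing. Each $W_j$ with $j\le\dim_{\F_q}(T)-1$ has dimension at most $\dim_{\F_q}(V)\le n-2$, hence is proper and $a$-noninvariant, so no increment vanishes; being positive, non-increasing and summing to $\dim_{\F_q}(V)-\dim_{\F_q}(S)=\dim_{\F_q}(T)-1$ over $\dim_{\F_q}(T)-1$ steps, each increment equals $1$. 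In particular $\dim_{\F_q}(S+aS)=\dim_{\F_q}(S)+1$, and the Key Lemma applied to $S$ \emph{with the same} $a$ gives $S=g''\langle 1,a,\dots,a^{\dim_{\F_q}(S)-1}\rangle$, as required.

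I expect the genuine obstacle to be conceptual rather than computational: a priori $S$ and $T$ could be progressions in \emph{different} ratios, and nothing in the hypotheses names a distinguished generator. The device that resolves this is to manufacture a single ratio $a$ from a critical $2$-dimensional section of $S$ (guaranteed by the flag argument), and then to establish the rigidity — via the monotonicity of the increments of the chain $W_j=\sum_{i<j+1}a^iS$, where the bound $\dim_{\F_q}(V)\le n-2$ is exactly what keeps every $W_j$ proper — that forces $S$ itself to be a progression in that very $a$.
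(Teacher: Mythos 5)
First, a point of reference: the paper does not prove this statement at all — it is quoted verbatim from \cite{BSZ2017} — so your attempt can only be judged on its own merits. Much of it is sound. The deduction that primality of $n$ makes the linear Cauchy--Davenport bound $\dim_{\F_q}(\langle S\cdot T\rangle_{\F_q})\ge\min\{\dim_{\F_q}(S)+\dim_{\F_q}(T)-1,n\}$ hold unconditionally for nonzero subspaces is correct. Your Key Lemma and its inductive proof are also correct (indeed it is already available inside the paper: it is case (b1) of Lemma \ref{lemma:power}, since for $n$ prime every $\mu\notin\F_q$ has $[\F_q(\mu):\F_q]=n$). And the closing chain argument is correct: once one knows $T=g'\langle 1,a,\ldots,a^{m-1}\rangle_{\F_q}$ for a ratio $a$ arising from a \emph{critical} two-dimensional section of $S$, the spaces $W_j=\sum_{i\le j}a^iS$ have positive increments (a vanishing increment would make some $W_j$ an $\F_{q^n}$-subspace of dimension at most $n-2$) summing to $m-1$ over $m-1$ steps, whence $\dim_{\F_q}(S+aS)=\dim_{\F_q}(S)+1$ and the Key Lemma finishes.

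The genuine gap is the flag argument that is supposed to produce that critical two-dimensional section. You assert that along your adaptively built flag one has $\langle S_j\cdot T\rangle_{\F_q}\subsetneq V$ for every $j<k$, so that all $k-1$ steps are strict and hence each equals $1$. This is unjustified, and it is false for flags built by your rule. Take $n$ prime, $n\ge m+4$, $a\notin\F_q$, $S=\langle 1,a,a^2\rangle_{\F_q}$, $T=\langle 1,a,\ldots,a^{m-1}\rangle_{\F_q}$: this is a critical pair with $\dim_{\F_q}(V)=m+2\le n-2$. Starting from $S_1=\langle 1\rangle_{\F_q}$, your rule only requires $sT\not\subseteq T$, which $s=a^2$ satisfies; but then $S_2=\langle 1,a^2\rangle_{\F_q}$ gives $\langle S_2\cdot T\rangle_{\F_q}=\langle 1,a,\ldots,a^{m+1}\rangle_{\F_q}=V$, the increments are $2,0$, the process jams at the next step, and this $S_2$ is not critical. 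To repair the argument you must show that \emph{some} flag keeps every increment equal to $1$; by Cauchy--Davenport this amounts to showing that $S$ has a hyperplane $S'$ with $\langle S'\cdot T\rangle_{\F_q}\neq V$ (and then inducting downward). That existence statement is precisely the hard core of Vosper-type theorems. Dually, such a hyperplane exists if and only if there is a nonzero $\phi\in V^*$ for which the bilinear form $(s,t)\mapsto\phi(st)$ on $S\times T$ has rank one; the hypotheses only give you a $(k+m-1)$-dimensional space of $k\times m$ forms, and spaces of that dimension avoiding rank-one matrices exist whenever $k,m\ge 3$ (Delsarte--Gabidulin rank metric codes with minimum distance $2$ have dimension $\max(k,m)(\min(k,m)-1)\ge k+m-1$), so no dimension count can produce the critical section: the multiplicative structure must be exploited in an essential way, which is exactly where the published proof in \cite{BSZ2017} does its real work. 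As written, your proof is complete only when $\min\{\dim_{\F_q}(S),\dim_{\F_q}(T)\}=2$.
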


Here we recall a result from \cite{NPSZ2022minsize}, whose consequence is the classification of critical pairs in the case in which one of the subspaces has dimension $2$; see \cite[Proposition 6.3]{NPSZ2022minsize}.

\begin{lemma}\label{lemma:power}\cite[Lemma 3.1]{NPSZ2022minsize}
Let $S$ be an $\fq$-subspace of $\fqn$ of dimension $k\geq2$ and let $\mu \in \fqn\setminus\fq$. Let $t=\dim_{\fq}(\fq(\mu))$.
\begin{itemize}
    \item [(a)] If $\dim_{\fq}(S\cap \mu S)=k$, then $S$ is an $\fq(\mu)$-subspace.
    \item [(b)] Suppose that $\dim_{\fq}(S\cap \mu S)=k-1$.
    \begin{itemize}
    \item [(b1)] If $t\geq k$ then $S=b \langle 1,\mu,\ldots,\mu^{k-1}\rangle_{\fq}$, for some $b \in \fqn^*$ and $t \neq k$.
    \item [(b2)] If $t\leq k-1$, write $k=t\ell+m$ with $m<t$, then $m>0$ and $S=\overline{S}\oplus b\langle 1,\mu,\ldots,\mu^{m-1}\rangle_{\fq}$, where $\overline{S}$ is an $\F_{q^t}$-subspace of dimension $\ell$, $b \in \fqn^*$ and $b \F_{q^t} \cap \overline{S}=\{0\}$.
    \end{itemize}
    In particular, $t$ divides $n$.
\end{itemize}
\end{lemma}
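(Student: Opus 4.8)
For part (a), the plan is short. The equality $\dim_{\fq}(S\cap\mu S)=k=\dim_{\fq}(S)$ forces $S\cap\mu S=S$, hence $\mu S=S$ since both spaces have dimension $k$. Thus $S$ is stable under multiplication by $\mu$ and, being $\fq$-linear, under all of $\fq[\mu]$; as $\mu$ is algebraic over $\fq$, the ring $\fq[\mu]$ is the field $\fq(\mu)=\F_{q^t}$, so $S$ is an $\F_{q^t}$-subspace. Since $\F_{q^t}\subseteq\fqn$ we also get $t\mid n$, the final assertion of the statement. For part (b), I would obtain (b1) and (b2) at once from a single induction on $k$, after first isolating the one estimate that drives it.

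That estimate is the following. Put $W=S\cap\mu S$, so $\dim_{\fq}(W)=k-1$ by hypothesis. Because $W\subseteq\mu S$, the assignment $x\mapsto\mu^{-1}x+W$ is a well-defined $\fq$-linear map $f\colon W\to S/W$ with $\ker f=\{x\in W:\mu^{-1}x\in W\}=W\cap\mu W$. As $\dim_{\fq}(S/W)=1$, rank–nullity gives $\dim_{\fq}(W\cap\mu W)\ge\dim_{\fq}(W)-1$, so $\dim_{\fq}(W\cap\mu W)\in\{k-2,k-1\}$. This dichotomy is exactly what feeds the induction.

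The core is then an induction on $k$ proving the unified claim: if $\dim_{\fq}(S\cap\mu S)=k-1$ and $k=t\ell+m$ with $0\le m\le t-1$, then $m\ge1$ and $S=\overline S\oplus b\langle1,\mu,\ldots,\mu^{m-1}\rangle_{\fq}$ with $\overline S$ an $\F_{q^t}$-subspace of $\fq$-dimension $t\ell$, $b\in\fqn^*$, and $b\F_{q^t}\cap\overline S=\{0\}$. The case $k=1$ is immediate. For $k\ge2$ one splits on $\dim_{\fq}(W\cap\mu W)$. If it equals $k-1$, then $W=\mu W$, so by (a) $W$ is an $\F_{q^t}$-subspace; this forces $t\mid(k-1)$, i.e. $m=1$, and any $b\in S\setminus W$ gives $S=W\oplus\langle b\rangle$ with $\overline S:=W$, the disjointness $b\F_{q^t}\cap\overline S=\{0\}$ being automatic since $\overline S$ is an $\F_{q^t}$-subspace. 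If instead it equals $k-2$, then $W$ satisfies the hypothesis, so the induction applies to $W$ and yields $W=\overline W\oplus b\langle1,\ldots,\mu^{m'-1}\rangle_{\fq}$ with $k-1=t\ell'+m'$ and $1\le m'\le t-1$. A dimension count—using $\dim_{\fq}(W\cap\mu^{-1}W)=\dim_{\fq}(W\cap\mu W)=k-2$—shows $\dim_{\fq}(W+\mu^{-1}W)=k$, and since $W,\mu^{-1}W\subseteq S$ this gives $S=W+\mu^{-1}W=\overline W\oplus b\mu^{-1}\langle1,\mu,\ldots,\mu^{m'}\rangle_{\fq}$.

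The delicate point, and the step I expect to be the main obstacle, is the last reconstruction together with the conclusion $m>0$: if $m'=t-1$ the enlarged segment $\langle1,\ldots,\mu^{m'}\rangle_{\fq}$ fills all of $\F_{q^t}$, so $S$ would be the direct sum of the two $\F_{q^t}$-subspaces $\overline W$ and $b\mu^{-1}\F_{q^t}$, hence an $\F_{q^t}$-subspace, contradicting $\dim_{\fq}(S\cap\mu S)=k-1<k$. Therefore $m'\le t-2$, and setting $m=m'+1\in\{2,\ldots,t-1\}$, $b':=b\mu^{-1}$, $\overline S:=\overline W$ yields the claimed form with $m\ge1$. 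Finally (b1) is the specialization $t\ge k$: for $t>k$ one has $\ell=0$, $m=k$, $\overline S=\{0\}$, so $S=b\langle1,\ldots,\mu^{k-1}\rangle_{\fq}$, while $t=k$ is excluded because it would force $m=0$, which is precisely the assertion $t\ne k$. Guaranteeing that the $\mu$-power segment never closes up into a genuine $\F_{q^t}$-block is what simultaneously pins down $m$ and encodes the non-existence of such $S$ when $t\mid k$, and I expect it to require the most care.
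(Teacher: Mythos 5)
Your proposal is correct, but note that there is nothing in this paper to compare it against: the lemma is quoted verbatim from \cite[Lemma 3.1]{NPSZ2022minsize} and the paper gives no proof of it, so your argument stands as a self-contained substitute. Checking it on its own merits: part (a) is the standard stabilizer argument ($S=\mu S$ forces closure under $\fq[\mu]=\fq(\mu)$), and it also disposes of the final claim $t\mid n$ since $\fq(\mu)$ is a subfield of $\fqn$. The engine of part (b) is sound: the map $x\mapsto \mu^{-1}x+W$ from $W=S\cap\mu S$ to $S/W$ is well defined because $W\subseteq \mu S$, and rank--nullity gives the dichotomy $\dim_{\fq}(W\cap\mu W)\in\{k-2,k-1\}$. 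In the first branch, $W$ is an $\F_{q^t}$-space, so $t\mid k-1$, $m=1$, and the disjointness $b\F_{q^t}\cap W=\{0\}$ for $b\in S\setminus W$ is indeed automatic. In the second branch, the identities $\mu^{-1}W=\mu^{-1}S\cap S\subseteq S$ and $\dim_{\fq}(W\cap\mu^{-1}W)=\dim_{\fq}(\mu W\cap W)=k-2$ give $S=W+\mu^{-1}W$, and since $\mu^{-1}\overline W=\overline W$, this sum equals $\overline W+ b\mu^{-1}\langle 1,\mu,\ldots,\mu^{m'}\rangle_{\fq}$; the directness of this sum, which you state but do not flag, follows from the dimension count $t\ell'+(m'+1)=k$, and is worth making explicit. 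Your treatment of the delicate point is exactly right: if $m'=t-1$ the segment closes up into $\F_{q^t}$, making $S$ an $\F_{q^t}$-space and contradicting $\dim_{\fq}(S\cap\mu S)=k-1$; this single observation is what yields $m>0$ in (b2) and $t\ne k$ in (b1) simultaneously. The only cosmetic caveat is that your induction needs the unified claim at $k=1$ (and applies part (a) to a space of dimension $k-1$, possibly $1$), which falls outside the lemma's stated hypothesis $k\ge 2$; as you implicitly use, both statements are trivially valid (or vacuous) in dimension $1$ because $\mu\notin\fq$, so no gap results.
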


\section{Classification of $(n-2)$-clubs}\label{sec:classn-2}

In this section we are going to classify $(n-2)$--clubs in $\PG(1,q^n)$. We will first detect a larger family of linear sets which contains all the $(n-2)$--clubs (up to equivalence), then we will describe how to find the number of points of a certain weight in relation to the intersection of some subspaces. Using the above mentioned results together with the results on critical pairs over finite fields, we are able to completely classify $(n-2)$-clubs.

First observe that for $n=3$ all the $2$-clubs are $\mathrm{P\Gamma L}(2,q^3)$-equivalent to $L_{\mathrm{Tr}_{q^3/q}}$, for $n=4$ we only have two not $\mathrm{P\Gamma L}(2,q^4)$-equivalent $2$-clubs (that are $L_{x^q-x^{q^3}}$ and the one from Construction \ref{constr:(n-2)clubpolynomial}) and the only $3$-club is $L_{\mathrm{Tr}_{q^4/q}}$. So, in this section we will assume $n\geq 5$.

Let us start by studying the weight distribution of a class of linear sets which contains the clubs we are interested.

\begin{proposition} \label{prop:formclub}
Let $S$ be an $h$-dimensional $\F_q$-subspace of $\F_{q^n}$ such that $1 \in S$ and $h\leq n-2$. Let $(a,b) \in \F_{q^n}^2$, with $a \notin S$ and $b \notin \F_q$. Let 
\[
U=(S \times \{0\}) +\langle (1,1) \rangle_{\F_{q}} + \langle (a,b) \rangle_{\F_{q}}  \subseteq \F_{q^n}^2.
\]
Then 
\begin{equation} \label{eq:formclub}
    L_U=\{\langle (s+\alpha+\beta a,\alpha +\beta b) \rangle \colon s \in S, \alpha,\beta \in \F_q, (s,\alpha,\beta) \neq (0,0,0)\}
\end{equation}
is an $\fq$-linear set of rank $h+2$ in $\PG(1,q^n)$, with $w_{L_U}(\langle (1,0) \rangle_{\F_{q^n}})=h$ and $w_{L_U}(\langle(0,1) \rangle_{\F_{q^n}})=1$.
\end{proposition}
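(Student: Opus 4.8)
The plan is to verify all three assertions by direct linear-algebra computations on the explicit form of $U$, isolating exactly where each of the two hypotheses $b\notin\F_q$ and $a\notin S$ (together with $1\in S$) gets used. Writing a general vector of $U$ as $(s+\alpha+\beta a,\,\alpha+\beta b)$ with $s\in S$ and $\alpha,\beta\in\F_q$ immediately yields the parametrization \eqref{eq:formclub}, so what remains is to compute $\dim_{\F_q}(U)$ and the two weights.

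For the rank, I would show that the sum $(S\times\{0\})+\langle(1,1)\rangle_{\F_q}+\langle(a,b)\rangle_{\F_q}$ is direct. Suppose $(s+\alpha+\beta a,\,\alpha+\beta b)=(0,0)$ with $s\in S$ and $\alpha,\beta\in\F_q$. The second coordinate reads $\alpha+\beta b=0$; since $b\notin\F_q$, a nonzero $\beta$ would force $b=-\alpha/\beta\in\F_q$, so $\beta=0$ and then $\alpha=0$. The first coordinate now gives $s=0$. Hence the three summands are in direct sum, $\dim_{\F_q}(U)=h+2$, and $L_U$ has rank $h+2$.

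For $w_{L_U}(\langle(1,0)\rangle_{\fqn})$ I would intersect $U$ with $W=\{(x,0)\colon x\in\fqn\}$. Membership forces $\alpha+\beta b=0$, and exactly as above $b\notin\F_q$ yields $\alpha=\beta=0$; thus $U\cap W=S\times\{0\}$, of dimension $h$. For $w_{L_U}(\langle(0,1)\rangle_{\fqn})$ I would intersect $U$ with $W'=\{(0,y)\colon y\in\fqn\}$, so that now the first coordinate must vanish: $s+\alpha+\beta a=0$. Since $1\in S$ and $S$ is an $\F_q$-subspace, $s+\alpha\in S$, whence $\beta a\in S$; because $a\notin S$ and $\beta\in\F_q$ this forces $\beta=0$, leaving $s=-\alpha$. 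The surviving vectors are precisely $(0,\alpha)$ with $\alpha\in\F_q$, so $U\cap W'=\{0\}\times\F_q$ has dimension $1$.

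There is no genuine obstacle here: the statement is a bookkeeping exercise whose only content is matching each conclusion to the hypothesis that drives it — $b\notin\F_q$ controls the rank count and the weight at $\langle(1,0)\rangle_{\fqn}$, while $a\notin S$ combined with $1\in S$ pins down the weight at $\langle(0,1)\rangle_{\fqn}$. The only care required is to keep the $\F_q$-coefficients $\alpha,\beta$ separate from the $\fqn$-scalars throughout, and to note that $h\leq n-2$ guarantees $h+2\leq n$, so that the configuration sits correctly inside $\PG(1,q^n)$ with the two distinguished weights $h$ and $1$ consistent with \eqref{eq:wpointsrank}.
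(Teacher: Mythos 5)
Your proof is correct and takes essentially the same approach as the paper's: both derive the rank and the weight at $\langle(1,0)\rangle_{\fqn}$ from the fact that $b\notin\F_q$ forces $\alpha+\beta b=0$ to imply $\alpha=\beta=0$, and both derive the weight at $\langle(0,1)\rangle_{\fqn}$ from $1\in S$ and $a\notin S$ forcing $\beta=0$ in $s+\alpha+\beta a=0$. The only cosmetic difference is that you check directness of the sum via the trivial representation of zero, whereas the paper observes $(a,b)\notin (S\times\{0\})+\langle(1,1)\rangle_{\F_q}$; the content is identical.
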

\begin{proof} Since $b \notin \F_q$, we have that $(a,b) \notin S\times \{0\} + \langle (1,1) \rangle_{\F_{q}}$ and so $U$ is a direct sum and $\dim_{\F_q}(U)=h+2$, that is $Rank (L_U)= h+2$. From $b \notin \F_q$, it also follows that $\alpha +\beta b=0$, if and only if $\alpha=\beta=0$. Thus, $w_{L_U}(\langle (1,0) \rangle_{\F_{q^n}})=h$.  Finally, $s+\alpha+\beta a=0$ implies $\beta=0$. Indeed, if $\beta \neq 0$, then $a= - \beta^{-1}(s+\alpha) \in S$, a contradiction.  So, $w_{L_U}(\langle(0,1) \rangle_{\F_{q^n}})=1$.
\end{proof}

Note that, since the point $\langle (1,0) \rangle_{\F_{q^n}}$ has weight $h$ in $L_U$ and the rank of $L_U$ is $h+2$, by \eqref{eq:wpointsrank} all the other points can have weight at most two in $L_U$. In the next result we show how to find the points of weight two.

\begin{theorem} \label{teo:existenceclub}
Let $S$ be an $h$-dimensional $\F_q$-subspace of $\F_{q^n}$ such that $1 \in S$ and $3 \leq h \leq n-2$. Consider 
\[
U=(S \times \{0\}) \oplus\langle (1,1) \rangle_{\F_{q}} \oplus \langle (a,b) \rangle_{\F_{q}}  \subseteq \F_{q^n}^2,
\]
with $a \notin S$, and $b \notin \F_q$. 
Then the set of points of weight $2$ in $L_U$ different from $\langle (1,0) \rangle_{\F_{q^n}}$ is
\[ \{ P_s=\langle (-s+a,b)\rangle_{\F_{q^n}} \colon s \in S \cap (a+bS) \} \]
and the size of such a set is $\lvert S \cap (a+bS) \rvert$. In particular, $L_U$ is a $h$-club of rank $h+2$ if and only if $a \notin S+bS$.
\end{theorem}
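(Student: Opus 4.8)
The plan is to compute, for an arbitrary point $P\neq\langle(1,0)\rangle_{\fqn}$, the $\fq$-dimension of $U\cap W$, where $W$ is the associated $\fqn$-line, and to read off exactly when this weight equals $2$. Since any such point has nonzero second coordinate, I may normalize $P=\langle(c,1)\rangle_{\fqn}$ for some $c\in\fqn$. Writing a general vector of $U$ as $(s+\alpha+\beta a,\alpha+\beta b)$ with $s\in S$ and $\alpha,\beta\in\fq$ as in \eqref{eq:formclub}, membership in $W=\langle(c,1)\rangle_{\fqn}$ forces the scalar to equal $\alpha+\beta b$ (from the second coordinate) and then yields the single $\fq$-linear condition $s+\alpha(1-c)+\beta(a-bc)=0$. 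Because the sum defining $U$ is direct, $w_{L_U}(P)$ equals the dimension of the solution space of this equation in $(s,\alpha,\beta)\in S\times\fq\times\fq$; as $s$ is determined by $\alpha,\beta$ whenever a solution exists, this is $\dim_{\fq}\{(\alpha,\beta)\in\fq^2:\alpha(1-c)+\beta(a-bc)\in S\}$.

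This quantity is at most $2$, and it equals $2$ precisely when both $1-c\in S$ and $a-bc\in S$. Using $1\in S$, the first condition becomes $c\in S$. I would then reparametrize: setting $s:=a-bc$, the second condition reads $s\in S$, while $c\in S$ gives $bc\in bS$, hence $s=a-bc\in a+bS$; thus a weight-$2$ point yields $s\in S\cap(a+bS)$, and scaling the representative by $b$ gives $\langle(c,1)\rangle_{\fqn}=\langle(bc,b)\rangle_{\fqn}=\langle(a-s,b)\rangle_{\fqn}=P_s$. The converse is the same computation run backwards: given $s\in S\cap(a+bS)$, write $s=a+bt$ with $t\in S$, put $c:=-t\in S$, and check that $P_s=\langle(c,1)\rangle_{\fqn}$ satisfies $c\in S$ and $a-bc=s\in S$, so $w_{L_U}(P_s)=2$. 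Finally, distinct $s$ give distinct $P_s$ (the second coordinates agree and are nonzero, forcing equality of first coordinates), so $s\mapsto P_s$ is a bijection onto the set of weight-$2$ points other than $\langle(1,0)\rangle_{\fqn}$, which gives the stated count $\lvert S\cap(a+bS)\rvert$.

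For the last assertion, recall from Proposition \ref{prop:formclub} that $\langle(1,0)\rangle_{\fqn}$ has weight $h$ and $\mathrm{Rank}(L_U)=h+2$; by \eqref{eq:wpointsrank} applied with $Q=\langle(1,0)\rangle_{\fqn}$, every other point has weight at most $2$. Hence $L_U$ is an $h$-club of rank $h+2$ if and only if no point other than $\langle(1,0)\rangle_{\fqn}$ has weight $2$, i.e.\ if and only if $S\cap(a+bS)=\emptyset$. Since $S\cap(a+bS)\neq\emptyset$ means $a=s-bt$ for some $s,t\in S$, that is $a\in S+bS$, emptiness of this intersection is equivalent to $a\notin S+bS$, as claimed. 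I expect the only delicate point to be the bookkeeping of the reparametrization $\langle(c,1)\rangle_{\fqn}\leftrightarrow P_s=\langle(-s+a,b)\rangle_{\fqn}$ together with the set identity $S\cap(a+bS)=\emptyset\iff a\notin S+bS$; everything else is routine linear algebra over $\fq$.
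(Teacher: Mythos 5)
Your proof is correct, and it takes a cleaner route than the paper's. The paper defines the map $\Phi\colon s\in S\cap(a+bS)\mapsto P_s$ up front, checks well-definedness by exhibiting two $\F_q$-independent vectors of $U$ spanning $P_s$, and then proves surjectivity by taking two $\F_{q^n}$-proportional representatives $(s_i+\alpha_i+\beta_i a,\alpha_i+\beta_i b)$ of a weight-$2$ point, cross-multiplying, showing $(\alpha_1,\beta_1)$ and $(\alpha_2,\beta_2)$ are $\F_q$-independent, and extracting an element of $S\cap(a+bS)$ from the resulting identity. You instead normalize an arbitrary point as $\langle(c,1)\rangle_{\fqn}$ and compute its weight exactly, as the dimension of the $\F_q$-subspace $\{(\alpha,\beta)\in\F_q^2:\alpha(1-c)+\beta(a-bc)\in S\}$ (legitimate because the sum defining $U$ is direct, by Proposition \ref{prop:formclub}), so that weight $2$ is equivalent to the two membership conditions $c\in S$ and $a-bc\in S$; the substitution $s=a-bc$ then produces the same bijection. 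Your approach buys a uniform weight formula for every point of the line --- in particular it re-derives the bound $w_{L_U}(P)\le 2$ without invoking \eqref{eq:wpointsrank}, and it would feed directly into the point count of Corollary \ref{cor:cardcasenotclub} --- while avoiding the paper's less transparent cross-multiplication step; the paper's argument, in exchange, never needs to normalize coordinates and manipulates only vectors lying in $U$ itself. All the delicate bookkeeping you flagged (the correspondence $\langle(c,1)\rangle_{\fqn}\leftrightarrow P_s$ and the identity $S\cap(a+bS)=\emptyset\iff a\notin S+bS$, which uses that $bS$ is closed under negation) checks out.
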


\begin{proof}
From Proposition \ref{prop:formclub}, $L_U$ is a linear set of rank $h+2$, with $w_{L_U}(\langle (1,0) \rangle_{\F_{q^n}})=h$ and $w_{L_U}(\langle(0,1) \rangle_{\F_{q^n}})=1$. First note that, by \eqref{eq:wpointsrank}, $w_{L_U}(P) \leq 2$ for every point $P$ different from $\langle (1,0) \rangle_{\F_{q^n}}$.
Let consider the map
\[
\begin{tabular}{l c c c }
$\Phi:$ & $S \cap (a+bS)$ & $\longrightarrow$ & $\{P \in L_U \colon w_{L_U}(P)=2, P \neq \langle (1,0) \rangle_{\F_{q^n}}\}$ \\
& $s$ & $\longmapsto$ & $P_s=\langle (-s+a,b)\rangle_{\F_{q^n}}$.
\end{tabular}
\]
First, we prove that $\Phi$ is well-defined. If $S \cap (a+bS)=\emptyset$, then $\Phi$ is not defined. Therefore, suppose that $S \cap (a+bS)\ne\emptyset$ and let $\overline{s} \in S \cap (a+bS)$, so $\overline{s}=a+b\overline{s}'$, for some $\overline{s}'\in S$. Then $(-\overline{s}+a,b)\in U$ and $(-\overline{s}+a,b)=b(-\overline{s}',1)=b(-1-\overline{s}'+1,1) \in U$. Since $b \notin \F_q$, the point $P_{\overline{s}}$ has weight 2 in $L_U$. \\
Now, we show that $\Phi$ is a one-to-one correspondence. 
Indeed, we only need to prove that $\Phi$ is surjective. If $L_U$ has no point of weight 2, it is clear. Otherwise, let $P$ be a point with weight $2$ in $L_U \setminus \{\langle (1,0) \rangle_{\F_{q^n}}\}$. So \[
P=\langle (s_1+\alpha_1+\beta_1a,\alpha_1+\beta_1 b)\rangle_{\F_{q^n}}=\langle (s_2+\alpha_2+\beta_2a,\alpha_2+\beta_2 b)\rangle_{\F_{q^n}},\] 
for some $s_1,s_2 \in S$ and $\alpha_1,\alpha_2,\beta_1,\beta_2 \in \F_q$,
with $(s_1+\alpha_1+\beta_1a,\alpha_1+\beta_1 b)$ and $(s_2+\alpha_2+\beta_2a,\alpha_2+\beta_2 b)$ $\F_{q^n}$-linearly dependent by a scalar $t\in \F_{q^n} \setminus \F_q$, that is $(s_1+\alpha_1+\beta_1a,\alpha_1+\beta_1 b)=t(s_2+\alpha_2+\beta_2a,\alpha_2+\beta_2 b)$. This implies 
\[
(s_1+\alpha_1+\beta_1 a)(\alpha_2+\beta_2 b)=(s_2+\alpha_2+\beta_2 a)(\alpha_1+\beta_1 b),
\]
and so 
\begin{equation} \label{eq:converslemmapoint2club1}
    s_1 \alpha_2-s_2 \alpha_1=a(\alpha_1 \beta_2-\alpha_2 \beta_1)-b(\alpha_1 \beta_2-\alpha_2 \beta_1)+b(s_2\beta_1-s_1\beta_2)
\end{equation}
Now, we observe that $\gamma=\alpha_1\beta_2-\alpha_2\beta_1 \neq 0$, that is $(\alpha_1,\beta_1)$ and $(\alpha_2,\beta_2)$ are $\fq$-linearly independent. Indeed, suppose that $(\alpha_1,\beta_1)$ and $(\alpha_2,\beta_2)$ are $\F_q$-proportional then there exists $\lambda \in \F_q$ such that $\alpha_2+\beta_2 b=\lambda (\alpha_1+\beta_1 b)=\lambda t (\alpha_2+\beta_2 b)$. This implies that $\lambda t=1$ and so $t \in \F_q$, a contradiction. \\
Therefore \eqref{eq:converslemmapoint2club1} implies 
\begin{equation} \label{eq:converslemmapoint2club2}
s_1 \frac{\alpha_2}{\gamma}-s_2 \frac{\alpha_1}{\gamma}=a+b\left(s_2 \frac{\beta_1}{\gamma}-s_1 \frac{\beta_2}{\gamma}-1 \right).
\end{equation}
 Let $\overline{s}=s_1 \frac{\alpha_2}{\gamma}-s_2 \frac{\alpha_1}{\gamma}$, because of $s_2 \frac{\beta_1}{\gamma}-s_1 \frac{\beta_2}{\gamma}-1 \in S$. By  \eqref{eq:converslemmapoint2club2}, $\overline{s} \in S \cap (a+b S)$. Since $(s_1+\alpha_1+\beta_1a,\alpha_1+\beta_1 b),(s_2+\alpha_2+\beta_2a,\alpha_2+\beta_2 b) \in U$ then 
 \[
 \frac{\alpha_2}{\gamma} (s_1+\alpha_1+\beta_1a,\alpha_1+\beta_1 b) -\frac{\alpha_1}{\gamma}(s_2+\alpha_2+\beta_2a,\alpha_2+\beta_2 b)=(\overline{s}-a,-b)
 \]
belongs to $U$ and defines again the point $P$ and hence $\Phi$ is surjective. So, $P= \langle(\overline{s}-a,-b) \rangle_{\F_{q^n}}=\Phi({-\overline{s}})$. In particular, if there exists a point of weight 2 in $L_U$ then $S \cap (a+bS) \neq \emptyset$. \\
Therefore, $L_U$ has only one point of weight greater than $1$ if and only if $S \cap (a+bS)= \emptyset$, which happens if and only if $a \notin S+bS$.
This completes the proof.
\end{proof}

Also in the case in which the linear sets as in Theorem \ref{teo:existenceclub} are not $(h-2)$-club, they have an interesting weight distribution if the points and size.

\begin{corollary} \label{cor:cardcasenotclub}
Let $S$ be an $h$-dimensional $\F_q$-subspace of $\F_{q^n}$ such that $1 \in S$ and $3 \leq h \leq n-2$. Consider 
\[
U=(S \times \{0\}) \oplus\langle (1,1) \rangle_{\F_{q}} \oplus \langle (a,b) \rangle_{\F_{q}}  \subseteq \F_{q^n}^2,
\]
with $a \notin S$, and $b \notin \F_q$. If $a \in S+bS$, then $L_U$ has $q^j$ points of weight $2$, with \[j=\dim_{\F_q}(S \cap bS)=2h-\dim_{\F_q}(S + bS)\]
and 
\[
\lvert L_U \rvert =q^{h+1}+q^{h}-q^{j+1}+1.
\]
\end{corollary}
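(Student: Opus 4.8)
The plan is to combine the explicit description of the weight-two points furnished by Theorem \ref{teo:existenceclub} with the counting identities \eqref{eq:pesicard} and \eqref{eq:pesivett}. By that theorem the weight-two points of $L_U$ distinct from $\langle(1,0)\rangle_{\F_{q^n}}$ are exactly the $P_s$ with $s\in S\cap(a+bS)$, and the map $s\mapsto P_s$ is a bijection; so the whole argument reduces to two tasks: (i) computing $\lvert S\cap(a+bS)\rvert$ under the standing hypothesis $a\in S+bS$, and (ii) converting the resulting weight distribution into $\lvert L_U\rvert$ by elementary arithmetic.

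First I would compute $\lvert S\cap(a+bS)\rvert$. Since $a\in S+bS$, I write $a=s_0+bs_1$ with $s_0,s_1\in S$; absorbing $bs_1$ into the subspace $bS$ gives $a+bS=s_0+bS$. For $x\in S$ one then has $x\in a+bS$ if and only if $x-s_0\in bS$, and since $s_0\in S$ this is equivalent to $x-s_0\in S\cap bS$. Hence $S\cap(a+bS)=s_0+(S\cap bS)$ is a nonempty coset of $S\cap bS$, so $\lvert S\cap(a+bS)\rvert=q^{j}$ with $j=\dim_{\F_q}(S\cap bS)$. Because $b\neq 0$, multiplication by $b$ is an $\F_q$-linear bijection, so $\dim_{\F_q}(bS)=h$ and Grassmann's identity yields $j=2h-\dim_{\F_q}(S+bS)$, as claimed. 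This coset identification is the only genuinely geometric point; everything afterwards is bookkeeping.

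Next I would record the weight distribution. By Proposition \ref{prop:formclub} the point $\langle(1,0)\rangle_{\F_{q^n}}$ has weight $h$, and since $\mathrm{Rank}(L_U)=h+2$, inequality \eqref{eq:wpointsrank} forces every other point to have weight at most $2$; as $h\geq 3$ the three occurring weights $1,2,h$ are pairwise distinct, so no two of the $N_i$ collapse. Thus $N_h=1$, $N_2=q^{j}$, and $N_i=0$ for every $i\notin\{1,2,h\}$, leaving $N_1$ as the only unknown besides $\lvert L_U\rvert$.

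Finally I would extract both quantities from the counting identities. Substituting this distribution into \eqref{eq:pesivett} (with rank $k=h+2$) gives
\[
N_1 + q^{j}(q+1) + \frac{q^{h}-1}{q-1} = \frac{q^{h+2}-1}{q-1},
\]
whence, using $\frac{q^{h+2}-q^{h}}{q-1}=q^{h}(q+1)$,
\[
N_1=(q+1)(q^{h}-q^{j}).
\]
Then \eqref{eq:pesicard} yields
\[
\lvert L_U\rvert = N_1+N_2+N_h=(q+1)(q^{h}-q^{j})+q^{j}+1=q^{h+1}+q^{h}-q^{j+1}+1,
\]
which is the asserted formula. I do not anticipate any real obstacle beyond step one: the entire difficulty is concentrated in the coset computation of $S\cap(a+bS)$, while the final extraction is routine, the only subtlety being the use of $h\geq 3$ to keep the weights $1$, $2$, $h$ distinct.
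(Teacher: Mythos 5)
Your proof is correct and follows essentially the same route as the paper: invoke Theorem \ref{teo:existenceclub} to identify the weight-two points with $S\cap(a+bS)$, then extract $N_1$ and $\lvert L_U\rvert$ from \eqref{eq:pesicard} and \eqref{eq:pesivett}. In fact you supply a detail the paper leaves implicit, namely the coset identification $S\cap(a+bS)=s_0+(S\cap bS)$ justifying $\lvert S\cap(a+bS)\rvert=q^{j}$, so nothing is missing.
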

\begin{proof}
Since $a \in S +bS$, we have that $S \cap (a+bS) \neq \emptyset$. So, by Theorem \ref{teo:existenceclub}, we know that that number of points of weight 2 is $\lvert S \cap (a+bS) \rvert$. Finally by \eqref{eq:pesicard} and \eqref{eq:pesivett}, we get the assertion.
\end{proof}

Next result shows that $h$-clubs of rank $h+2$  (up to the action of $\PGL(2, q^n)$ on $\PG(1, q^n)$) are as described in \eqref{eq:formclub}.

\begin{lemma}\label{lem:canonicalform}
If $L_W$ is an $h$-club of rank $h+2 \leq n$ in $\PG(1,q^n)$. Then, $L_W$ is $\PGL(2,q^n)$-equivalent to a linear set $L_U$, where $U=(S \times \{0\}) \oplus\langle (1,1) \rangle_{\F_{q}} \oplus \langle (a,b) \rangle_{\F_{q}}$, for some $h$-dimensional $\F_q$-subspace $S$ containing $1$, with $a \notin S$, $b \notin \F_q$ and $a \notin S+bS$.
\end{lemma}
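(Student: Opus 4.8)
The plan is to exploit that $\PGL(2,q^n)$ is transitive on the points of $\PG(1,q^n)$ to move the unique point of weight $h$ of $L_W$ onto $\langle(1,0)\rangle_{\fqn}$, and then to bring a basis of the underlying subspace into the prescribed shape by composing only with collineations that fix that point. So I would first replace $W$ by $g(W)$ for a suitable $g\in\mathrm{GL}(2,q^n)$ and assume that $\langle(1,0)\rangle_{\fqn}$ is the weight-$h$ point, i.e.\ $\dim_{\F_q}(W\cap(\fqn\times\{0\}))=h$. Writing this intersection as $S_0\times\{0\}$ and letting $T=\pi_2(W)$ be the image of $W$ under the second projection, the kernel of $\pi_2|_W$ is precisely $S_0\times\{0\}$, whence $\dim_{\F_q}T=(h+2)-h=2$.

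For the normalisation I would use only maps $(x,y)\mapsto(\alpha x+\beta y,\delta y)$ with $\alpha,\delta\in\fqn^*$, which all fix $\langle(1,0)\rangle_{\fqn}$ and preserve $\fqn\times\{0\}$, hence keep the shape of the decomposition intact. First, choosing $0\neq s_0\in S_0$ and applying $(x,y)\mapsto(s_0^{-1}x,y)$, I may assume $1\in S:=s_0^{-1}S_0$ (this leaves $T$ untouched). Next, choosing $0\neq t_0\in T$ and applying $(x,y)\mapsto(x,t_0^{-1}y)$, I may assume $1\in T$ (this fixes $S\times\{0\}$ pointwise); since $\dim_{\F_q}T=2$ this yields $T=\langle1,b\rangle_{\F_q}$ with $b\notin\F_q$. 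Because $1\in T$ there is $w=(c,1)\in W$, and applying the shear $(x,y)\mapsto(x+(1-c)y,y)$ (again fixing $S\times\{0\}$ and $T$) I can arrange $(1,1)\in W$. Finally I pick $w'=(a,b)\in W$ with $\pi_2(w')=b$; the three summands $S\times\{0\}$, $\langle(1,1)\rangle_{\F_q}$, $\langle(a,b)\rangle_{\F_q}$ are independent (their second coordinates span $\langle1,b\rangle_{\F_q}$ with $b\notin\F_q$) and have total dimension $h+2=\dim_{\F_q}W$, so $W=(S\times\{0\})\oplus\langle(1,1)\rangle_{\F_q}\oplus\langle(a,b)\rangle_{\F_q}$ with $1\in S$ and $b\notin\F_q$.

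It then remains to verify $a\notin S$ and $a\notin S+bS$. For the first I would argue by contradiction: if $a\in S$ then $(a,0)\in W$, so $(0,b)=(a,b)-(a,0)\in W$, while $(0,1)=(1,1)-(1,0)\in W$ because $1\in S$; as $1$ and $b$ are $\F_q$-independent this gives $w_{L_W}(\langle(0,1)\rangle_{\fqn})\geq2$, a point of weight at least $2$ distinct from $\langle(1,0)\rangle_{\fqn}$, contradicting that the $h$-club $L_W$ has a unique point of weight greater than $1$. Hence $a\notin S$ and $W$ is exactly of the form treated in Theorem \ref{teo:existenceclub}; since $L_W$ is an $h$-club of rank $h+2$, that theorem then forces $a\notin S+bS$, finishing the reduction.

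I expect the only delicate point to be the bookkeeping in the second paragraph: one must order the collineations so that each lies in the stabiliser of the weight-$h$ point and so that the normalisations already achieved (first $1\in S$, then $1\in T$, then $(1,1)\in W$) survive the later maps. Once the canonical decomposition is in place, the conditions $a\notin S$ and $a\notin S+bS$ come out cheaply, the former from the uniqueness of the high-weight point of a club and the latter directly from Theorem \ref{teo:existenceclub}.
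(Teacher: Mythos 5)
Your proposal is correct and follows essentially the same route as the paper's proof: normalize via a projectivity so that the weight-$h$ point is $\langle(1,0)\rangle_{\fqn}$ and $(1,1)\in U$, split off $S\times\{0\}$ and a complementary vector $(a,b)$, rule out $a\in S$ by exhibiting a second point of weight $\geq 2$ at $\langle(0,1)\rangle_{\fqn}$, and invoke Theorem \ref{teo:existenceclub} for $a\notin S+bS$. The only cosmetic differences are that the paper achieves the normalization with a single linear map sending generators of the weight-$h$ point and of a second point of $L_W$ to $(1,0)$ and $(1,1)$ (so $1\in S$ comes for free), whereas you compose several maps in the stabilizer of $\langle(1,0)\rangle_{\fqn}$, and you obtain $b\notin\F_q$ structurally from $\dim_{\F_q}\pi_2(W)=2$ rather than by contradiction.
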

\begin{proof}
Let $P=\langle \mathbf{v}_1 \rangle_{\F_{q^n}} \in L_W$ be the point of weight $h$, with $\mathbf{v}_1 \in W$. Let $Q= \langle \mathbf{v}_2 \rangle_{\F_{q^n}} \in L_W \setminus \{P\}$, with $\mathbf{v}_2 \in W$ and let $\psi \in \PGL(2,q^n)$ be the projectivity of $\PG(1, q^n)$ induced by the unique $\F_{q^n}$-isomorpishm $f$ of $\F_{q^n}^2$ that maps $\mathbf{v}_1$ in $(1,0)$ and $\mathbf{v}_2$ in $(1,1)$. Then $\psi(L_W)=L_{U}$ where $U=f(W)$ and $(1,0),(1,1) \in U$. Clearly, $\psi(P)=\langle(1, 0) \rangle_{\F_{q^n}}$, $\psi(Q)=\langle (1, 1) \rangle_{\F_{q^n}}$ and  $w_{L_U}(\langle (1,0) \rangle_{\F_{q^n}})=h$.
Let $S'=U \cap \langle (1, 0) \rangle_{\F_{q^n}}$. We can write $S'=S \times \{0\}$, where $S$ is an $\fq$-subspace of $\fqn$, $\dim_{\F_q}(S')=\dim_{\F_q}(S)=h$ and $1 \in S$. Let $(a,b) \in U$ such that $U= (S \times \{0\}) \oplus\langle (1,1) \rangle_{\F_{q}} \oplus \langle (a,b) \rangle_{\F_{q}}$, so 
\begin{equation}\label{eq:subspaceU}
U=\{ (s+\alpha+\beta a,\alpha +\beta b)  \colon s \in S, \alpha,\beta \in \F_q\}.
\end{equation} 
Since $(a,b) \notin (S \times \{0\}) \oplus\langle (1,1) \rangle_{\F_{q}}$, then either $a \notin S$ or $b \notin \F_q$. Suppose that $b \in \F_q$. Then $(-b+a,0) \in U$ (choosing $s=0$, $\alpha=-b$ and $\beta=1$ in \eqref{eq:subspaceU}). So $-b+a \in S$ and then $a \in S$ (since $b\in \fq$), a contradiction. Therefore, we have that $b \notin \F_q$. Suppose now that $a \in S$. Then, (choosing $s=-a$, $\alpha=0$, $\beta=1$ in \eqref{eq:subspaceU}) the point  $\langle (0,b) \rangle_{\F_{q^n}}=\langle (0,1) \rangle_{\F_{q^n}}$ is a point different from $\langle (1,0) \rangle_{\F_{q^n}}$ that has weight $2$ in $L_U$, this is a contradiction since $L_U$ is an $h$-club. Finally, by Theorem \ref{teo:existenceclub}, we get $a \notin S+bS$.
\end{proof}

\begin{remark}
Note that if $\dim_{\F_q}(S)=h < n/2$, then for every $b \in \F_{q^n} \setminus \F_q$, $\dim_{\F_q}(S+bS)<n$ and hence there exists $a \in \F_{q^n} \setminus (S + bS)$. Therefore, the $\fq$-subspace $U=(S \times \{0\}) \oplus\langle (1,1) \rangle_{\F_{q}} \oplus \langle (a,b) \rangle_{\F_{q}}$ defines an $h$-club of rank $h+2$.
\end{remark}

Our aim is to give a complete classification of $(n-2)$-club of rank $n$. By Lemma \ref{lem:canonicalform}, we may assume that $(n-2)$-clubs are as described in \eqref{eq:formclub} and we start to prove under which conditions on $S$, $a$ and $b$ linear sets as in Lemma \ref{lem:canonicalform} are clubs.

\begin{theorem}\label{th:classclass}
Let $S$ be an $(n-2)$-dimensional $\F_q$-subspace of $\F_{q^n}$, such that $1 \in S$. Consider 
\[
U=(S \times \{0\}) \oplus\langle (1,1) \rangle_{\F_{q}} \oplus \langle (a,b) \rangle_{\F_{q}}  \subseteq \F_{q^n}^2,
\]
with $a \notin S$ and $b\notin {\F_{q}}$. Let $T=\langle 1, b \rangle_{\F_q}$ and $\F_{q^t}=\F_q(b)$.
Then only one of the following three cases occurs:
\begin{enumerate}
    \item $\dim_{\F_q}(\langle S \cdot T \rangle_{\fq})=n$ and $L_U$ is not an $(n-2)$-club. In this case $L_U$ contains $q^{n-4}$ points of weight 2 and $\lvert L_U \rvert=q^{n-1}+q^{n-2}-q^{n-3}+1$.
    \item $\dim_{\F_q}(\langle S \cdot T \rangle_{\fq})=n-1$, and in this case $L_U$ is an $(n-2)$-club of rank $n$ if and only if $a \notin \langle S \cdot T \rangle_{\fq}$. So, if $a \in \langle S \cdot T \rangle_{\fq}$, $L_U$ is an $\F_q$-linear set having $q^{n-3}$ points of weight 2 and $\lvert L_U \rvert=q^{n-1}+1$. While if $a \notin \langle S \cdot T \rangle_{\fq}$:
    \begin{enumerate}[(2.1)]
        \item if $t\geq n-2$ there exists $c \in \F_{q^n}^*$, such that $S=c \langle 1,b,\ldots,b^{n-3}\rangle_{\F_{q}}$,
    \[
    L_U=\left\{\left\langle \left(\sum_{i=0}^{n-3}cx_ib^i+\alpha+\beta a,\alpha +\beta b\ \right) \right\rangle_{\F_{q^n}} \colon x_i,\alpha,\beta \in \F_{q} \mbox{ not all zero}\right\},
    \]
    and $t=n$.
        \item if $t \leq n-3$, there exist $\ell \in \mathbb{N}, c \in \F_{q^n}^*$ such that $n=t(\ell+1)$, with $t \geq 3$ $S=\overline{S}\oplus c \langle 1, b,\ldots,b^{t-3} \rangle_{\F_{q}}$, with $\overline{S}$ an $\F_{q^t}$-subspace of dimension $\ell$ such that $c\F_{q^t} \cap \overline{S}=\{0\}$, and 
    \[
    L_U=\left\{\left\langle \left(\overline{s}+\sum_{i=0}^{t-3}cx_ib^i+\alpha+\beta a,\alpha +\beta b \right) \right\rangle_{\F_{q^n}} \colon x_i,\alpha,\beta \in \F_{q},\overline{s} \in \overline{S} \mbox{ not all zero}\right\}.
    \]
    \end{enumerate}
    \item $\dim_{\F_q}(\langle S \cdot T \rangle_{\fq})=n-2$, then $n$ is even, $\F_q(b)=\F_{q^2}$, $S$ is an $\F_{q^2}$-subspace. Moreover, there exists $c \in \F_{q^n}^*$ such that $S=c\langle 1, \gamma,\ldots,\gamma^{n/2-2}\rangle_{\F_{q^2}}$ and
    \[
    L_U=\left\{\left\langle \left(\sum_{i=0}^{n/2-2}cx_i\gamma^i+\alpha+\beta a,\alpha +\beta b\ \right) \right\rangle_{\F_{q^n}} \colon x_i \in \F_{q^2},\alpha,\beta \in \F_{q} \mbox{not all zero}\right\},
    \] for some $\gamma \in \F_{q^n}^*$ such that $\F_{q^2}(\gamma)=\F_{q^n}$. Moreover, $L_U$ is an $(n-2)$-club of rank $n$ if and only if $a \notin c\langle 1,\gamma,\ldots,\gamma^{n/2-2} \rangle_{\F_{q^2}}$. If $a \in c\langle 1,\gamma,\ldots,\gamma^{n/2-2} \rangle_{\F_{q^2}}$, then all the points of $L_U$ different from $\langle (1,0) \rangle_{\F_{q^n}}$ have weight 2 and $\lvert L_U \rvert =q^{n-2}+1$.
\end{enumerate}
In particular, if $n$ is prime Cases (2.2) and (3) cannot occur.
\end{theorem}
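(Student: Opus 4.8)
The plan is to note first that, since $T=\langle 1,b\rangle_{\F_q}$ and $1\in S$, one has $\langle S\cdot T\rangle_{\F_q}=S+bS$; thus the whole trichotomy is governed by $\dim_{\F_q}(S+bS)$. As $S\subseteq S+bS\subseteq \F_{q^n}$ and $\dim_{\F_q}(bS)=\dim_{\F_q}(S)=n-2$, the dimension formula gives $\dim_{\F_q}(S+bS)=2(n-2)-\dim_{\F_q}(S\cap bS)$ and forces $\dim_{\F_q}(S+bS)\in\{n-2,n-1,n\}$, which are exactly Cases (3), (2), (1). In every case I would read off $\dim_{\F_q}(S\cap bS)$ --- namely $n-4$, $n-3$, $n-2$ respectively --- and then quote Theorem \ref{teo:existenceclub} and Corollary \ref{cor:cardcasenotclub}: the club criterion is precisely $a\notin S+bS=\langle S\cdot T\rangle_{\F_q}$, and whenever $a\in\langle S\cdot T\rangle_{\F_q}$ the number of weight-$2$ points is $q^{j}$ with $j=\dim_{\F_q}(S\cap bS)$ and $\lvert L_U\rvert=q^{n-1}+q^{n-2}-q^{j+1}+1$. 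This reduces the whole theorem to determining the structure of $S$ in each case.

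Cases (1) and (3) are then short. In Case (1), $S+bS=\F_{q^n}$, so $a\in S+bS$ always, $L_U$ is never a club, and Corollary \ref{cor:cardcasenotclub} with $j=n-4$ yields the stated $q^{n-4}$ points of weight $2$ and $\lvert L_U\rvert=q^{n-1}+q^{n-2}-q^{n-3}+1$. In Case (3) I would apply Lemma \ref{lemma:power}(a): since $\dim_{\F_q}(S\cap bS)=n-2=\dim_{\F_q}(S)$, the subspace $S$ is an $\F_q(b)=\F_{q^t}$-subspace, whence $t\mid n$ and $t\mid\dim_{\F_q}(S)=n-2$, so $t\mid\gcd(n,2)$; as $b\notin\F_q$ gives $t\ge 2$, this forces $t=2$, $\F_q(b)=\F_{q^2}$ and $n$ even. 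Then $S$ is an $\F_{q^2}$-hyperplane of $\F_{q^n}$, so Proposition \ref{lemm:formhyperplanes} applied over $\F_{q^2}$ produces $c$ and $\gamma$ with $S=c\langle 1,\gamma,\ldots,\gamma^{n/2-2}\rangle_{\F_{q^2}}$; substituting into \eqref{eq:formclub} gives the displayed form, and the club criterion $a\notin S$ together with the count for $j=n-2$ finishes the case.

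Case (2) is the heart of the argument and the step I expect to be the main obstacle. Here $\dim_{\F_q}(S\cap bS)=n-3=\dim_{\F_q}(S)-1$, which is exactly the hypothesis of Lemma \ref{lemma:power}(b) with $\mu=b$ and $k=n-2$; the delicate point is to convert the generic conclusion of that lemma into the precise exponents in the statement using $t\mid n$ and $n\ge 5$. In subcase (b1), $t\ge k=n-2$ gives $S=c\langle 1,b,\ldots,b^{n-3}\rangle_{\F_q}$ and $t\ne n-2$; since for $n\ge 5$ the only divisor of $n$ that is $\ge n-2$ is $n$ itself, this forces $t=n$, which is subcase (2.1). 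In subcase (b2), $t\le k-1=n-3$, and writing $n=tN$ one has $n-2=t(N-1)+(t-2)$, so that in the decomposition $k=t\ell+m$ with $0\le m<t$ one gets $m=t-2$ and $\ell=N-1$; the constraint $m>0$ from Lemma \ref{lemma:power}(b) rules out $t=2$ (which would give $m=0$), hence $t\ge 3$ and $n=t(\ell+1)$, giving $S=\overline S\oplus c\langle 1,b,\ldots,b^{t-3}\rangle_{\F_q}$ with $\overline S$ an $\F_{q^t}$-subspace of dimension $\ell$, i.e.\ subcase (2.2). In both subcases substituting $S$ into \eqref{eq:formclub} yields the displayed descriptions, while the club criterion and the count for $a\in\langle S\cdot T\rangle_{\F_q}$ (with $j=n-3$, giving $q^{n-3}$ weight-$2$ points and $\lvert L_U\rvert=q^{n-1}+1$) follow as in the general strategy.

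Finally, the last sentence is a corollary of this structure: Case (3) forces $n$ even and subcase (2.2) forces $n$ to have a divisor $t$ with $3\le t\le n-3$, and both are impossible when $n$ is prime, so only Cases (1) and (2.1) can occur. The only genuinely nontrivial work is the bookkeeping in Case (2) that extracts the exponents $n-3$ and $t-3$ from Lemma \ref{lemma:power}(b); everything else is assembling Theorem \ref{teo:existenceclub}, Corollary \ref{cor:cardcasenotclub} and Proposition \ref{lemm:formhyperplanes}.
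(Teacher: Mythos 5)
Your proposal is correct and follows essentially the same route as the paper's proof: the trichotomy on $\dim_{\F_q}(S+bS)$, the reduction of the club criterion and of the weight-$2$ counts to Theorem \ref{teo:existenceclub} and Corollary \ref{cor:cardcasenotclub}, and the structural identification of $S$ via Lemma \ref{lemma:power} (with the arithmetic forcing $t=n$ in (2.1) and $m=t-2$, $t\geq 3$, $n=t(\ell+1)$ in (2.2)) and Proposition \ref{lemm:formhyperplanes} all match the paper. The only cosmetic difference is the final claim for prime $n$, which you deduce directly from the divisibility constraints produced by the case analysis, whereas the paper invokes Theorem \ref{teo:bachocserrazemor}; both arguments are valid.
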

\begin{proof}
\textbf{Case (1)}: This case is a consequence of Theorem \ref{teo:existenceclub}. Indeed, note that $\langle S\cdot T\rangle_{\fq}=S+bS=\F_{q^n}$ and hence $a \in S+bS$, so that Theorem \ref{teo:existenceclub} implies that $L_U$ is not an $(n-2)$-club. Also, by Corollary \ref{cor:cardcasenotclub} the weight distribution and the size of $L_U$ is determined.\\
\textbf{Case (2)}: In the case that $\dim_{\F_q}(\langle S \cdot T \rangle)=n-1$, we have $\dim_{\F_q}(S\cap bS)=n-3$, and so by Lemma \ref{lemma:power}, we need to analyze the following two cases:
\begin{itemize}
    \item if $t \geq n-2$, there exists $c \in \F_{q^n}$ such that $S=c \langle 1,b,\ldots,b^{n-3} \rangle_{\F_q}$. Moreover, since $n>4$, we have $t>n-2$ and so $t=n$.
    \item If $t < n-2$, we write $n-2=t \ell+m$, with $0<m<t$. In this case, since $t \mid n$ then $t \mid m+2$ and so there exists $s \in \mathbb{N}$ such that $m+2=st$. Therefore, since $m<t$ then $2>t(s-1)$. Because of $t\geq 2$, $s=1$, hence $n=t(\ell +1 )$. Also since $t \mid m+2$ and $0<m<t$, we have $t \geq 3$.
\end{itemize}
\textbf{Case (3)}: Note that $\dim_{\F_q}(\langle S \cdot T \rangle_{\fq})=n-2$ implies $S \cap bS=S$. From (a) of Lemma \ref{lemma:power}, we get that $S$ is an $\fq(b)$-subspace and hence $t \mid n-2$. Since $t \mid n$ then $t\mid 2$ and $t=2$. Hence $\dim_{\F_{q^2}}(S)=n/2-1$ and fixed $\gamma \in \F_{q^n}^*$ such that $\F_{q^n}=\F_{q^2}(\gamma)$, we can now apply Proposition \ref{lemm:formhyperplanes} to obtain the desired form. In the case $a \in c\langle 1,\gamma,\ldots,\gamma^{n/2-2} \rangle_{\F_{q^2}}$ we have that $a \in S+bS=S$ and so by Corollary \ref{cor:cardcasenotclub} the weight distribution and the size of $L_U$ are determined.

Finally, suppose that $n$ is a prime number. Then only the first case of Theorem \ref{teo:bachocserrazemor} can occur, which means that $\dim_{\F_q}(\langle S \cdot T \rangle_{\fq}) \in \{n-1,n\}$ and correspond to Cases (1) and (2.1).
\end{proof}

As a consequence of Lemma \ref{lem:canonicalform} and Theorem \ref{th:classclass}, we get the following theorem of classification on $(n-2)$--clubs.

\begin{corollary} \label{cor:classn-2club}
Let $L$ be a linear set of rank $n$ of $\PG(1,q^n)$. Then $L$ is an $(n-2)$--club if and only if it is $\PGL(2,q^n)$--equivalent to a linear set $L_U$ such that $U$ has the following form:
$n=t(\ell+1)$, with $\ell \in \mathbb{N}$, and
\[U=\left\{ \left(\overline{s}+\sum_{i=0}^{t-3}cx_ib^i+\alpha+\beta a,\alpha +\beta b\ \right)  \colon x_i,\alpha,\beta \in \F_{q},\overline{s} \in \overline{S} \right\},\] 
with $a,b,c \in \F_{q^n}^*$, $\overline{S} \subseteq \F_{q^n}$ such that
\begin{itemize}
       \item $\F_q(b)=\F_{q^t}$,
        \item $\overline{S}$ is an
       $\F_{q^t}$-subspace of dimension $\ell$ such that $c\F_{q^t} \cap \overline{S}=\{0\}$,
       \item $1 \in \overline{S} \oplus c\langle 1,b,\ldots,b^{t-2} \rangle_{\F_q}$,
        \item $a \notin \overline{S}\oplus c\langle 1,b,\ldots,b^{t-2} \rangle_{\F_q} $.
\end{itemize} 
\end{corollary}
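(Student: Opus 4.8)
The plan is to deduce Corollary \ref{cor:classn-2club} by combining the canonical-form reduction of Lemma \ref{lem:canonicalform} with the trichotomy of Theorem \ref{th:classclass}. First I would note that Lemma \ref{lem:canonicalform} already tells us that any $(n-2)$-club $L$ of rank $n$ is $\PGL(2,q^n)$-equivalent to some $L_U$ with $U=(S\times\{0\})\oplus\langle(1,1)\rangle_{\F_q}\oplus\langle(a,b)\rangle_{\F_q}$, where $S$ is an $(n-2)$-dimensional $\F_q$-subspace containing $1$, $a\notin S$, $b\notin\F_q$, and crucially $a\notin S+bS$. So the forward direction reduces to showing that this $U$ is precisely of the advertised shape, and the backward direction reduces to checking that any $U$ of the advertised shape yields an $(n-2)$-club.

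For the forward direction, I would run through the three cases of Theorem \ref{th:classclass} applied to this $U$. Since $L_U$ is a genuine $(n-2)$-club, the condition $a\notin S+bS=\langle S\cdot T\rangle_{\F_q}$ rules out Case (1) entirely (there $\langle S\cdot T\rangle_{\F_q}=\F_{q^n}$, forcing $a\in S+bS$) and rules out the subcase $a\in\langle S\cdot T\rangle_{\F_q}$ of Cases (2) and (3). The surviving possibilities are subcase (2.1) with $t=n$, subcase (2.2) with $n=t(\ell+1)$ and $t\geq 3$, and Case (3) with $t=2$. The key observation is that all three can be written uniformly: setting $\overline{S}$ to be the $\F_{q^t}$-subspace of dimension $\ell$ appearing in (2.2) (with $\ell=0$, i.e.\ $\overline{S}=\{0\}$, in case (2.1) where $t=n$; and $\overline{S}=c\langle1,\gamma,\dots,\gamma^{n/2-2}\rangle_{\F_{q^2}}$ recast appropriately in Case (3) where $t=2$ and the $c\langle1,b,\dots,b^{t-3}\rangle_{\F_q}$ summand is empty). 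In each case $S=\overline{S}\oplus c\langle1,b,\dots,b^{t-3}\rangle_{\F_q}$, and since $S$ has dimension $n-2=t(\ell+1)-2$ while $\overline{S}\oplus c\langle1,b,\dots,b^{t-2}\rangle_{\F_q}$ adds back the two missing powers $b^{t-2}$ (and the top generator), the three bulleted conditions follow: $\F_q(b)=\F_{q^t}$ is immediate, the direct-sum and $c\F_{q^t}\cap\overline{S}=\{0\}$ conditions come straight from Lemma \ref{lemma:power}, and $1\in\overline{S}\oplus c\langle1,b,\dots,b^{t-2}\rangle_{\F_q}$ together with $a\notin\overline{S}\oplus c\langle1,b,\dots,b^{t-2}\rangle_{\F_q}$ are exactly the translations of $1\in S$ and $a\notin S+bS$ once one checks that $S+bS=\overline{S}\oplus c\langle1,b,\dots,b^{t-2}\rangle_{\F_q}$.

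For the backward direction, I would start from an arbitrary $U$ of the displayed form and verify it produces an $(n-2)$-club of rank $n$. Writing $S=\overline{S}\oplus c\langle1,b,\dots,b^{t-3}\rangle_{\F_q}$, the hypothesis $\F_q(b)=\F_{q^t}$ and the $\F_{q^t}$-linearity of $\overline{S}$ give $bS=\overline{S}\oplus c\langle b,\dots,b^{t-2}\rangle_{\F_q}$, whence $S+bS=\overline{S}\oplus c\langle1,b,\dots,b^{t-2}\rangle_{\F_q}$ is an $(n-1)$-dimensional space. The condition $1\in\overline{S}\oplus c\langle1,b,\dots,b^{t-2}\rangle_{\F_q}$ (together with $a\notin S$, which one reads off from $a\notin S+bS\supseteq S$) lets Proposition \ref{prop:formclub} apply, so $L_U$ has rank $n$ with the point $\langle(1,0)\rangle_{\F_{q^n}}$ of weight $n-2$; then $a\notin S+bS$ feeds directly into Theorem \ref{teo:existenceclub} to conclude there are no further points of weight $2$, i.e.\ $L_U$ is an $(n-2)$-club.

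I expect the main obstacle to be bookkeeping rather than conceptual: reconciling the three superficially different normal forms of Theorem \ref{th:classclass} into the single parametrization of the Corollary, and in particular verifying in each case that the summand $c\langle1,b,\dots,b^{t-3}\rangle_{\F_q}$ truncates correctly (it is empty when $t=2$ and equals all of $S$ when $\overline{S}=\{0\}$, $t=n$) and that $S+bS$ is genuinely the $(n-1)$-dimensional space $\overline{S}\oplus c\langle1,b,\dots,b^{t-2}\rangle_{\F_q}$. Care is also needed to confirm that the four bulleted constraints are not merely necessary but also jointly sufficient, so that no spurious configurations sneak into the backward direction; this amounts to re-deriving the identity $S+bS=\overline{S}\oplus c\langle1,b,\dots,b^{t-2}\rangle_{\F_q}$ cleanly from the $\F_{q^t}$-structure of $\overline{S}$.
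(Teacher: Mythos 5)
Your strategy coincides with the paper's own proof: the paper also derives this corollary from Lemma \ref{lem:canonicalform} and Theorem \ref{th:classclass}, observing that Cases (2.1), (2.2) and (3) correspond to the choices $\ell=0$, $3\leq t\leq n-3$ and $t=2$. Your forward direction is sound in outline, but the Case (3) ``recasting'' you defer is a genuine step, not bookkeeping: the element $c$ of Case (3) of Theorem \ref{th:classclass} lies in $S=c\langle 1,\gamma,\ldots,\gamma^{n/2-2}\rangle_{\F_{q^2}}=\overline{S}$, so it violates the bullet $c\F_{q^t}\cap\overline{S}=\{0\}$; one must produce a fresh $c$ with $c\F_{q^2}\cap\overline{S}=\{0\}$ and $a\notin\overline{S}\oplus c\F_q$ (possible, e.g.\ writing $a=s+c_0\mu_0$ in a decomposition $\F_{q^n}=\overline{S}\oplus c_0\F_{q^2}$ and taking $c=c_0\mu_0\nu$ with $\nu\in\F_{q^2}\setminus\F_q$). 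This is needed precisely because for $t=2$ the fourth bullet is strictly stronger than the club criterion $a\notin\overline{S}$ given by Theorem \ref{th:classclass}.

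The real gap is in your backward direction. You invoke Proposition \ref{prop:formclub} and Theorem \ref{teo:existenceclub} for $S=\overline{S}\oplus c\langle 1,b,\ldots,b^{t-3}\rangle_{\F_q}$, but both results require $1\in S$, whereas the third bullet only gives $1\in\overline{S}\oplus c\langle 1,b,\ldots,b^{t-2}\rangle_{\F_q}$, a strictly weaker condition; the sentence claiming this bullet ``lets Proposition \ref{prop:formclub} apply'' is exactly where the argument breaks. The failure is not repairable from the bullets as literally stated: when $1\notin S$, one can check that $b\notin S+bS$ and the weight-two-point criterion becomes the affine condition $a\in S+bS+b$, which is compatible with all four bullets. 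Concretely, take $n=6$, $t=2$, $\{1,\omega,\omega^2\}$ an $\F_{q^2}$-basis of $\F_{q^6}$, $\overline{S}=\langle\omega,\omega^2\rangle_{\F_{q^2}}$, $c=1$, $b\in\F_{q^2}\setminus\F_q$ and $a=b+\omega$: every bullet holds, yet both $(1,1)$ and $(b,b)=b(1,1)$ lie in $U$ (the latter with $\overline{s}=-\omega$, $\alpha=0$, $\beta=1$), so $\langle(1,1)\rangle_{\F_{q^6}}$ has weight two and $L_U$ is not an $(n-2)$-club. Hence the ``if'' direction goes through only under the stronger reading $1\in S$, i.e.\ with $b^{t-3}$ in place of $b^{t-2}$ in the third bullet---which is the standing hypothesis of Theorem \ref{th:classclass} and what your forward direction actually produces---so your proof must carry that condition explicitly (and note that for $t=2$ your identity $S+bS=\overline{S}\oplus c\langle 1,b,\ldots,b^{t-2}\rangle_{\F_q}$ also fails: there $S+bS=\overline{S}$ has dimension $n-2$, and the fourth bullet is sufficient for, but not equivalent to, the club condition).
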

\begin{proof}
Theorem \ref{th:classclass} gives a complete characterization of subspaces defining $(n-2)$-clubs. So, to prove the assertion, it is enough to observe that the subspace $U$ in Case (2.1) of Theorem \ref{th:classclass} is obtained choosing $\ell=0$, Case (2.2) choosing $3 \leq t \leq n-3$ and finally Case (3) choosing $t=2$.
\end{proof}

The known examples of $(n-2)$-clubs are described in the following remarks.

\begin{remark}\label{rk:constr1}
Let $U$ be defined as follows
\[U=\left\{ \left(\sum_{i=0}^{n-3}cx_ib^i+\alpha+\beta a,\alpha +\beta b\right) \colon x_i,\alpha,\beta \in \F_{q} \right\},\] 
with $a,b,c \in \F_{q^n}^*$ such that
    \begin{itemize}
        \item $\F_q(b)=\F_{q^n}$,
        \item $a \notin c\langle 1, b,\ldots,b^{n-3},b^{n-2}\rangle_{\F_{q}}$.
        \end{itemize}
Choosing $c=1$, $a=b-b^{n-1}$ and by applying the projectivity of $\PG(1,q^n)$ induced by the matrix 
$
\left(
\begin{matrix} 
b & -a \\
0 & 1
\end{matrix}
\right)
$, we obtain the family of linear sets described in Construction \ref{constr:(n-2)clubpolynomial}.
\end{remark}

\begin{remark} \label{vandervoorderm:n-2club}
The families of $i$-clubs described in Construction \ref{constr:clubviascattered} give $(n-2)$-clubs in the following cases
\begin{itemize}
    \item $n=2r$, $r>1$, $f(x)=a_0x+a_1x^q \in \F_{q^2}[x]$ with $a_1\ne 0$ and
   \[U_{a,b}=\left\{ \left(a_0x_0+a_1x_0^q-ax_0,bx_0+\sum_{i=1}^{t-1}x_i \omega^i\right) \colon x_i \in \F_{q^2} \right\},\]
    for some fixed $a,b \in \F_{q^2}$ with $b\ne 0$ and $\{1,\omega,\ldots,\omega^{t-1}\}$ an $\F_{q^2}$-basis of $\F_{q^n}$. Then $L_{U_{a,b}}$ is an $(n-2)$-club if $a_0x+a_1x^q-ax$ is invertible over $\F_{q^2}$ 
   \item $n=3r$, $r>1$, $f(x)=a_0x+a_1x^q+a_2x^{q^2} \in \F_{q^3}[x]$ such that either $a_1= 0$ and $a_2 \ne 0$ or $a_1 \ne 0$ and $\N_{q^3/q}(a_2/a_1)\ne 1$, and
   \[U_{a,b}=\left\{ \left(a_0x_0+a_1x_0^q+a_2x_0^{q^2}-ax_0,bx_0+\sum_{i=1}^{t-1}x_i \omega^i\right) \colon x_i \in \F_{q^3} \right\},\]
    for some fixed $a,b \in \F_{q^3}$ with $b\ne 0$ and $\{1,\omega,\ldots,\omega^{t-1}\}$ an $\F_{q^3}$-basis of $\F_{q^n}$. Then $L_{U_{a,b}}$ is an $(n-2)$-club if $a_0x+a_1x^q+a_2x^{q^2}-ax$ is not invertible over $\F_{q^3}$.
    \end{itemize}
\end{remark}

\begin{remark}
The condition on being scattered on the polynomial $f$ for the subspaces in \eqref{eq:Ua,b} has been written down explicitly.
Indeed, linearized polynomials which results scattered in $\F_{q^2}$ and in $\F_{q^3}$ are exactly those that define linear sets of pseudoregulus type.
In particular, for the case of $\F_{q^3}$, $f$ is scattered if and only if there not exists an element $\alpha \in \F_{q^3}$ such that
\[ \dim_{\fq}(\ker(f(x)-\alpha x))=2. \]
By \cite[Theorem 2.24]{lidl_finite_1997}, this is possible if and only if
\[ f(x)-\alpha x = \gamma \mathrm{Tr}_{q^3/q}(\beta x), \]
for some nonzero $\beta,\gamma \in \F_{q^3}$.
Clearly, this can happen if and only if $a_1\ne 0$ and $\N_{q^3/q}(a_2/a_1)=1$.
\end{remark}

\section{Equivalence of clubs}\label{sec:equivalence}

In this section we will deal with the $\mathrm{\Gamma L}$-equivalence issue for the known families of clubs.
We start by studying when two examples of Construction \ref{constr:clubviascattered} are $\GammaL (2,q^n)$-equivalent.

\begin{theorem}\label{th:equivscattclub}
Let $n=rt$, $t,r>1$, $f_1,f_2\in \mathcal{L}_{t,q}$ such that $f_1$ and $f_2$ are scattered $q$-polynomials. 
Let 
\[U_1=\left\{ \left(f_1(x_0)-a_1x_0,b_1x_0+\sum_{i=1}^{r-1}x_i \omega_1^i\right) \colon x_i \in \F_{q^t} \right\},\]
and 
\[U_2=\left\{ \left(f_2(x_0)-a_2x_0,b_2x_0+\sum_{i=1}^{r-1}x_i \omega_2^i\right) \colon x_i \in \F_{q^t} \right\},\]
for some fixed $a_1,a_2,b_1,b_2 \in \F_{q^t}$ with $b_1,b_2 \ne 0$ and $\{1,\omega_1,\ldots,\omega_1^{r-1}\}$ and $\{1,\omega_2,\ldots,\omega_2^{r-1}\}$ two  $\F_{q^t}$-basis of $\F_{q^n}$. Then $U_1$ and $U_2$ are $\GammaL(2,q^n)$-equivalent if and only if the spaces $\overline{U}_{f_1}=\{(f_1(x_0)-a_1x_0,b_1x_0) : x_0 \in \F_{q^t}\}$ and $\overline{U}_{f_2}=\{(f_2(x_0)-a_2x_0,b_2x_0) : x_0 \in \F_{q^t}\}$ are $\GammaL(2,q^t)$-equivalent via an element $\phi \in \GammaL(2,q^t)$ such that $\phi$ fixes the subspace $\langle (0,1) \rangle_{\F_{q^t}}$.
\end{theorem}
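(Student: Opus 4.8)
The plan rests on two observations: the club $L_{U_i}$ has a distinguished point, and $\overline{U}_{f_i}$ is intrinsic to $U_i$. Put $V_i=\langle \omega_i,\dots,\omega_i^{r-1}\rangle_{\F_{q^t}}$, so that $\F_{q^n}=\F_{q^t}\oplus V_i$, and let $p_i\colon \F_{q^n}\to \F_{q^t}$ be the $\F_{q^t}$-linear projection with kernel $V_i$. After the substitution $z=b_ix_0$ one has $U_i=\{(g_i(p_i(B)),B)\colon B\in \F_{q^n}\}$, where $g_i(z)=f_i(b_i^{-1}z)-a_ib_i^{-1}z$ is scattered exactly when $f_i$ is; in particular every first coordinate lies in $\F_{q^t}$, so $\overline{U}_{f_i}=U_i\cap (\F_{q^t}\times \F_{q^t})$ and $U_i=\overline{U}_{f_i}\oplus(\{0\}\times V_i)$. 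Finally $\langle (0,1)\rangle_{\F_{q^n}}$ is the unique point of weight $>1$ of $L_{U_i}$, with $U_i\cap(\{0\}\times \F_{q^n})=\{0\}\times p_i^{-1}(\ker g_i)$.

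For the ``only if'' direction I take $\Psi\in \GammaL(2,q^n)$ with companion automorphism $\rho$ and $\Psi(U_1)=U_2$. As $\Psi$ preserves weights it fixes $\langle (0,1)\rangle_{\F_{q^n}}$, so $\Psi(x,y)=(m_{11}x^\rho,\,m_{12}x^\rho+m_{22}y^\rho)$ with $m_{11}m_{22}\ne 0$. Since $\F_{q^t}$ is the unique subfield of its order, $\rho$ stabilises it; comparing first coordinates gives $m_{11}\pi_1(U_1)^\rho=\pi_1(U_2)$ inside $\F_{q^t}$, and passing to $\F_{q^t}$-spans yields $m_{11}\in \F_{q^t}$. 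Next $\Psi(\{0\}\times V_1)=\{0\}\times m_{22}V_1^\rho$ sits inside $U_2\cap(\{0\}\times \F_{q^n})$, so $p_2(m_{22}V_1^\rho)$ is an $\F_{q^t}$-subspace of $\ker g_2$; as $\dim_{\F_q}\ker g_2\le 1<t$ it is $0$, whence $m_{22}V_1^\rho=V_2$, and $1\notin V_1^\rho$ then gives $\mu:=p_2(m_{22})\ne 0$. Defining $\phi\colon \F_{q^t}^2\to \F_{q^t}^2$ by $\phi(x,y)=(A,p_2(B))$ with $(A,B)=\Psi(x,y)$, I obtain the $\F_{q^t}$-semilinear map $(x,y)\mapsto(m_{11}x^\tau,\,p_2(m_{12})x^\tau+\mu y^\tau)$, $\tau=\rho|_{\F_{q^t}}$; it is upper triangular, hence fixes $\langle(0,1)\rangle_{\F_{q^t}}$, it is invertible because $m_{11}\mu\ne 0$, and $\phi(\overline{U}_{f_1})\subseteq \overline{U}_{f_2}$ since $\Psi(\overline{U}_{f_1})\subseteq U_2$; equality follows by dimension.

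For the ``if'' direction I start from $\phi(x,y)=(c_{11}x^\tau,c_{12}x^\tau+c_{22}y^\tau)$ with $c_{11}c_{22}\ne 0$ and $\phi(\overline{U}_{f_1})=\overline{U}_{f_2}$, and reconstruct $\Psi$. First I extend $\tau$ to an automorphism $\rho$ of $\F_{q^n}$. The decisive step is that $V_1^\rho$ and $V_2$ are both $\F_{q^t}$-hyperplanes of the $r$-dimensional $\F_{q^t}$-space $\F_{q^n}$, so by Proposition~\ref{lemm:formhyperplanes} applied over $\F_{q^t}$ each is a scalar multiple of one fixed polynomial hyperplane; hence $V_2=m_0V_1^\rho$ for some $m_0\in \F_{q^n}^*$. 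As $1\notin V_1^\rho$ we have $p_2(m_0)\ne 0$, so after multiplying $m_0$ by a suitable element of $\F_{q^t}^*$ I may take $m_{22}$ with $m_{22}V_1^\rho=V_2$ and $p_2(m_{22})=c_{22}$. Setting $m_{11}=c_{11}$, $m_{12}=(c_{12}/c_{22})m_{22}$ and $\Psi(x,y)=(m_{11}x^\rho,\,m_{12}x^\rho+m_{22}y^\rho)$, the identities $p_2(m_{12})=c_{12}$, $p_2(m_{22})=c_{22}$ show that $\Psi$ sends $\{0\}\times V_1$ onto $\{0\}\times V_2\subseteq U_2$ and each $(g_1(w),w)$ into $U_2$, the latter being equivalent to $\phi(g_1(w),w)\in \overline{U}_{f_2}$; using $U_1=\overline{U}_{f_1}\oplus(\{0\}\times V_1)$ and dimensions, $\Psi(U_1)=U_2$.

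The triangular shape of $\Psi$ and the semilinear bookkeeping are routine. The real obstacle is that any equivalence of the two clubs must match their weight spaces at the special point, which forces the scalar identity $m_{22}V_1^\rho=V_2$ between the $(r-1)$-dimensional complements $V_1,V_2$; for $r\ge 3$ there is no a priori reason a single scalar should relate two such subspaces. The point that unlocks the ``if'' direction is that $V_1,V_2$ are in fact $\F_{q^t}$-hyperplanes, so they lie in one orbit of $\F_{q^n}^*$ by Proposition~\ref{lemm:formhyperplanes}, and the side condition $1\notin V_1^\rho$ lets me simultaneously prescribe $p_2(m_{22})=c_{22}$ and thereby reproduce exactly the given $\phi$. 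I expect the most delicate verifications to be $m_{22}V_1^\rho=V_2$ and the invertibility of $\phi$, both of which rely on $\ker g_2$ being too small ($\dim_{\F_q}\le 1<t$) to contain a nonzero $\F_{q^t}$-subspace, i.e.\ on the scatteredness of $g_2$.
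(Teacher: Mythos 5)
Your proposal is correct and follows essentially the same route as the paper's proof: triangularity of the equivalence forced by weight preservation at the distinguished point $\langle (0,1)\rangle_{\F_{q^n}}$, projection onto the $\F_{q^t}$-component (your $p_2$ is exactly the paper's reduction modulo $\overline{S}_2$ after expanding $C,D$ in the basis $\{1,\omega_2,\ldots,\omega_2^{r-1}\}$), scatteredness to rule out degeneracy, and Proposition \ref{lemm:formhyperplanes} together with an $\F_{q^t}$-scalar normalization to assemble the global map in the converse, finishing by a dimension count. The only noteworthy difference is that in the forward direction you prove $m_{22}V_1^{\rho}=V_2$ via the kernel-size argument ($\dim_{\F_q}\ker g_2\le 1<t$), a step the paper asserts directly from weight preservation without spelling out the details, so your write-up is a slightly more careful rendering of the same argument.
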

\begin{proof}
Denote $\overline{S}_1=\langle \omega_1,\ldots,\omega_1^{r-1} \rangle_{\F_{q^t}}$ and $\overline{S}_2=\langle \omega_2,\ldots,\omega_2^{r-1} \rangle_{\F_{q^t}}$. Suppose that $U_1$ and $U_2$ are $\GammaL(2,q^n)$-equivalent. Then there exists a matrix $\begin{pmatrix} A & B \\ C & D \end{pmatrix} \in \mathrm{GL}(2,q^n)$ and $\rho \in \Aut(\F_{q^n})$ such that
\begin{equation} \label{eq:equivalenceiclubdeboeck}
\begin{pmatrix} A & B \\ C & D \end{pmatrix} U_1^{\rho}=U_2.
\end{equation}
Since $\langle (0,1) \rangle_{\F_{q^n}}$ is the only point in $L_{U_1}$ and $L_{U_2}$ having weight greater than 1, then $B = 0, D \neq 0$ and 
\[
D \overline{S}_1^{\rho}=\overline{S}_2.
\]
Moreover, \eqref{eq:equivalenceiclubdeboeck} is satisfied if and only if for every $x_0 \in \F_{q^t}$ and $ s_1 \in \overline{S}_1 $ there exist $y_0 \in \F_{q^t}$ and $ s_2 \in \overline{S}_2$ such that 
\begin{equation} \label{eq:reductionequivalence1}
C(f_1^{\rho}(x^{\rho}_0)-a_1^{\rho}x^{\rho}_0) +D(b^{\rho}_1x^{\rho}_0+s^{\rho}_1)=b_2 y_0+s_2
\end{equation}
and 
\begin{equation} \label{eq:reductionequivalence2}
A(f_1^{\rho}(x^{\rho}_0)-a_1^{\rho}x^{\rho}_0)=f_2(y_0)-a_2 y_0.
\end{equation}
Note that $\{1,\omega_2,\ldots,\omega_2^{r-1}\}$ is an $\F_{q^t}$-basis of $\F_{q^n}$, this allows us to write  $C=\sum_{i=0}^{r-1}C_i \omega_2^i$ and 
$D=\sum_{i=0}^{r-1}D_i \omega_2^i$, with $C_i$'s and $D_i$'s in $\F_{q^t}$.  Now,  \eqref{eq:reductionequivalence1} reads as follows
\[ \sum_{i=0}^{r-1} C_i \omega_2^i(f_1^{\rho}(x^{\rho}_0)-a_1^{\rho}x^{\rho}_0)+\sum_{i=0}^{r-1} D_i \omega_2^i(b^{\rho}_1x^{\rho}_0+s^{\rho}_1)=b_2 y_0+s_2 \]
and since $Ds^{\rho}_1 \in S_2$, the above equality implies 
\begin{equation} \label{eq:reductionequivalence3} 
C_0(f_1^{\rho}(x^{\rho}_0)-a_1^{\rho}x^{\rho}_0) +D_0(b^{\rho}_1x^{\rho}_0)=b_2 y_0.
\end{equation}
Note also that $A \in \F_{q^t}$, because of Equation \eqref{eq:reductionequivalence2}. Moreover, note that $(C_0,D_0) \neq (0,0)$. Suppose that $D_0 = 0$, then \eqref{eq:reductionequivalence3} together with \eqref{eq:reductionequivalence2} implies that $f_2(y_0)=\frac{A}{C_0}b_2y_0+a_2y_0$, for every $y_0 \in \F_{q^t}$. So $f_2$ is not a scattered $q$-polynomial, a contradiction. So $D_0 \neq 0$.
This means that the spaces $\overline{U}_{f_1}$ and $\overline{U}_{f_2}$ are $\GammaL(2,q^t)$-equivalent via the map of  $\GammaL(2,q^t)$ defined by the matrix $\begin{pmatrix}
A_0 & 0 \\
C_0 & D_0 
\end{pmatrix}$ and the restriction of the automorphism $\rho$ to $\F_{q^t}$. Conversely, suppose that $\overline{U}_{f_1}$ and $\overline{U}_{f_2}$ are $\GammaL(2,q^t)$-equivalent via an element $\phi \in \GammaL(2,q^t)$ such that $\phi (\langle (0,1) \rangle_{\F_{q^t}})=\langle (0,1) \rangle_{\F_{q^t}}$. Then $\phi$ can be represented by a matrix $\begin{pmatrix} A_0 & 0 \\ C_0 & D_0 \end{pmatrix} \in \mathrm{GL}(2,q^t)$ and $\rho \in \Aut(\F_{q^t})$ such that
\begin{equation} \label{eq:equivalenceiclubdeboeck2}
\begin{pmatrix} A_0 & 0 \\ C_0 & D_0 \end{pmatrix} \overline{U}_{f_1}^{\rho}=\overline{U}_{f_2}.
\end{equation}
Moreover, $\overline{S}^{\rho}_1$ is an $(r-1)$-dimensional $\F_{q^t}$-subspace of $\F_{q^{n}}$. By Lemma \ref{lemm:formhyperplanes} there exists $D' \in \F_{q^n}^*$ such that $D' \overline{S}_1^{\rho}=\overline{S}_2$. Let write $D'$ as an $\F_{q^t}$-linear combination of $\{1,\omega_2,\ldots,\omega_2^{r-1}\}$, namely $D'=\sum_{i=0}^{r-1}D_i' \omega_2^i$. Then we also have $\frac{D_0}{D_0'}D'\overline{S}_1^{\rho}=\overline{S}_2$ holds. Denote by $D=\frac{D_0}{D_0'}D'$. Therefore, since $D=D_0+\sum_{i=1}^{r-1} \frac{D_i'}{D_0'}\omega_2^i$ and $b_1^\rho x_0^\rho \sum_{i=1}^{r-1} \frac{D_i'}{D_0'}\omega_2^i \in \overline{S}_2$, then the image of $U_1$ via the invertible map $\Phi$ represented by the matrix $\begin{pmatrix}
A_0 & 0 \\
C_0 & D
\end{pmatrix}$ and the automorphism $\rho$ (whose action is extended to $\fqn$) is contained in $U_2$. Since $U_1$ and $U_2$ have the same dimension we obtain that $\Phi(U_1)=U_2$ and hence $U_1$ and $U_2$ are $\GammaL(2,q^n)$-equivalent.
\end{proof}

Due to the above result, we know that there are several $\GammaL (2,q^n)$-inequivalent examples of $i$-club in $\PG(1,q^n)$, we refer to Table 1 provided in \cite{NPSZ2022complwei} for a list of all the known families of scattered polynomials (for the more recent examples see \cite{LZ2,LMTZ,NSZ}). 

\begin{remark}
The number of inequivalent subspaces of the recent family associated with the scattered polynomials presented in \cite{NSZ}, which extends those in \cite{LZ2,LMTZ}, is computed in \cite[Theorem 4.12]{NSZ} and a lower bound is given in \cite[Theorem 4.14]{NSZ}.
For the number of inequivalent subspaces associated with the more classical family of Lunardon-Polverino polynomials see \cite{AutLunPol}.
\end{remark}

Now we proceed by showing that the two different types (according to the behaviour of $\dim_{\fq}(\langle S\cdot T\rangle_{\fq})$, cfr. Theorem \ref{th:classclass}) of $\fq$-subspaces obtained in Theorem \ref{cor:classn-2club} are $\Gamma\mathrm{L}(2,q^n)$-inequivalent. To this aim the following lemma is needed.

\begin{lemma} \label{lem:sisjinequivalent}
Consider the following $\fq$-subspaces of $\F_{q^n}$ and let
\begin{enumerate}
    \item \[S_1=\left\{ \sum_{i=0}^{n-3}c_1x_ib_1^i\colon x_i \in \F_{q} \right\},\] with $b_1,c_1 \in \F_{q^n}^*$ such that $\F_q(b_1)=\F_{q^n}$,
  
    \item if $n=t(\ell+1)$, with $\ell \in \mathbb{N}$,
    \[S_2=\left\{ \overline{s}+\sum_{i=0}^{t-3}c_2x_ib_2^i \colon x_i\in \F_q,\overline{s} \in \overline{S} \right\},\] with $b_2,c_2 \in \F_{q^n}^*$ such that $\F_q(b_2)=\F_{q^t}$, with $3 \leq t \leq n-3$, and  $\overline{S} \subseteq \F_{q^n}$ is an $\F_{q^t}$-subspace of dimension $\ell$ and $c_2\F_{q^t} \cap \overline{S}=\{0\}$.
    \item if $n$ is even,
    \[
    S_3=\left\{ \sum_{i=0}^{n/2-2}c_3x_i b_3^i \colon x_i \in \F_{q^2}\right\},
    \]
    with $c_3,b_3 \in \F_{q^n}^*$, such that $\F_{q^2}(b_3)=\F_{q^n}$.
\end{enumerate}
Then there cannot exist $\lambda \in \F_{q^n}^*$ such that $S_i = \lambda S_j^{\rho}$, for some $\rho \in \Aut(\F_{q^n})$ and $i,j \in \{1,2,3\}$ such that $i\ne j$.
\end{lemma}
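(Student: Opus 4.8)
The invariants I will use measure how much subfield structure each subspace carries. For a divisor $e$ of $n$ and an $\F_q$-subspace $S\subseteq\F_{q^n}$, let $\Delta_e(S)$ denote the largest $\F_q$-dimension of an $\F_{q^e}$-subspace contained in $S$. Every $\rho\in\Aut(\F_{q^n})$ fixes the subfield $\F_{q^e}$ setwise and every scaling by $\lambda\in\F_{q^n}^*$ sends $\F_{q^e}$-subspaces to $\F_{q^e}$-subspaces of the same $\F_q$-dimension, so $\Delta_e(\lambda S^\rho)=\Delta_e(S)$; that is, $\Delta_e$ is a $\GammaL(1,q^n)$-invariant. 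Hence it suffices, for each pair $i\ne j$, to produce a value of $e$ for which $\Delta_e(S_i)\ne\Delta_e(S_j)$, and I will only need $e=2$ and $e=t$.

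To evaluate $\Delta_e$ I pass to the nondegenerate form $(x,y)\mapsto\mathrm{Tr}_{q^n/q}(xy)$ and write $S^\perp$ for the dual of $S$. The map $W\mapsto W^\perp$ is an inclusion-reversing involution sending $\F_{q^e}$-subspaces to $\F_{q^e}$-subspaces, so the $\F_{q^e}$-subspaces $W\subseteq S$ are dual to the $\F_{q^e}$-subspaces $W'\supseteq S^\perp$; the largest such $W$ is dual to the smallest such $W'$, namely $\langle S^\perp\rangle_{\F_{q^e}}$. This yields
\[\Delta_e(S)=n-\dim_{\F_q}\langle S^\perp\rangle_{\F_{q^e}}.\]
For $S_1$, Corollary \ref{cor:dualbasis} applied to the polynomial basis $(1,b_1,\dots,b_1^{n-1})$ shows that $S_1^\perp$ is a scalar multiple of $\langle 1,b_1\rangle_{\F_q}$. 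Since $\F_q(b_1)=\F_{q^n}$ we have $b_1\notin\F_{q^e}$ for every proper divisor $e$ of $n$, so $\langle S_1^\perp\rangle_{\F_{q^e}}$ is $2$-dimensional over $\F_{q^e}$ and therefore $\Delta_e(S_1)=n-2e$. On the other hand $S_3$ is an $\F_{q^2}$-subspace by construction, so $\Delta_2(S_3)=n-2$, while $S_2$ contains the $\F_{q^t}$-subspace $\overline S$, of $\F_q$-dimension $\ell t=n-t$, and this is the largest $\F_{q^t}$-subspace it can contain (the next admissible dimension, $n$, being the whole space), so $\Delta_t(S_2)=n-t$.

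Two of the three comparisons follow immediately: $\Delta_2(S_1)=n-4\ne n-2=\Delta_2(S_3)$ separates $S_1$ from $S_3$, and $\Delta_t(S_1)=n-2t\ne n-t=\Delta_t(S_2)$ separates $S_1$ from $S_2$. For the remaining pair I again take $e=2$: as $\Delta_2(S_3)=n-2=\dim_{\F_q}S_3$, it is enough to prove that $S_2$ is \emph{not} an $\F_{q^2}$-subspace, for then $\Delta_2(S_2)<n-2$.

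This last point is the crux, and I expect it to require a short case analysis on the parity of $t$. If $t$ is odd, then $S_2$ being $\F_{q^2}$-closed would force its dimension $n-2$ to be even, hence $n$ even and (as $t$ is odd) $\ell$ odd; but then the $\F_{q^2}$-span of $\overline S$ equals $\F_{q^{2t}}\overline S$, an $\F_{q^{2t}}$-subspace of $S_2$ whose $\F_q$-dimension is a multiple of $2t$ strictly exceeding $\ell t$, hence at least $(\ell+1)t=n>n-2$ --- impossible. If $t$ is even, then $c_2\F_{q^t}$ is an $\F_{q^2}$-subspace and the identity $S_2\cap c_2\F_{q^t}=c_2\langle 1,b_2,\dots,b_2^{t-3}\rangle_{\F_q}$ (which uses $c_2\F_{q^t}\cap\overline S=\{0\}$) would make $\langle 1,b_2,\dots,b_2^{t-3}\rangle_{\F_q}$ an $\F_{q^2}$-subspace; but inside $\F_q(b_2)=\F_{q^t}$ this block has exactly the shape of $S_1$, and the same iterated-intersection (folding) argument that computes its idealizer to be $\F_q$ shows it carries no $\F_{q^2}$-structure. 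Either way $S_2$ fails to be $\F_{q^2}$-closed, which completes the separation. The delicate technical points are the explicit shape of $S_1^\perp$ coming from Corollary \ref{cor:dualbasis}, the intersection identity in the even case, and the folding computation for the small block.
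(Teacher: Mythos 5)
Your proof is correct, but it is organized around a genuinely different device than the paper's. You package everything into the semilinear invariant $\Delta_e$ (largest $\F_q$-dimension of an $\F_{q^e}$-subspace contained in the space) and compute it by duality; the paper argues case by case. For the pair $(S_1,S_2)$ the two arguments use essentially the same ingredients (the large $\F_{q^t}$-subspace $\overline S\subseteq S_2$, dualization, and the explicit form $S_1^\perp=c_1^{-1}\delta^{-1}\langle 1,b_1\rangle_{\fq}$ coming from Corollary \ref{cor:dualbasis}; the paper concludes $b_1\in\F_{q^t}$ directly rather than comparing $n-2t$ with $n-t$). The real divergence is in the comparisons with $S_3$: the paper observes that $\dim_{\fq}(S_1\cap b_1S_1)=n-3$ and $\dim_{\fq}(S_2\cap b_2S_2)=n-3$, and that a scalar multiple of an automorphic image of the $\F_{q^2}$-subspace $S_3$ would itself be $\F_{q^2}$-closed, forcing these intersections to be $\F_{q^2}$-subspaces of odd $\F_q$-dimension while $n$ is even --- a two-line contradiction with no case distinction on the parity of $t$. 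Your treatment of $(S_2,S_3)$ is therefore longer than necessary, and your even-$t$ sub-case secretly rediscovers this very trick in miniature: the unproved ``folding'' claim that $V=\langle 1,b_2,\ldots,b_2^{t-3}\rangle_{\fq}$ carries no $\F_{q^2}$-structure is completed in one line by noting that $\dim_{\fq}(V\cap b_2V)=t-3$ is odd, and you should write that line out, since as stated it is the only step of your proof that is asserted rather than proved. What your approach buys is uniformity (one invariant handles $(S_1,S_2)$ and $(S_1,S_3)$ identically, and would extend to distinguishing other families); what the paper's buys is brevity, especially for $(S_2,S_3)$.
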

\begin{proof}
Let $n$ any positive integer for which we can consider $S_i$ and $S_j$ for $i,j \in \{1,2,3\}$ with $i\ne j$.
Suppose that there exists $\lambda \in \F_{q^n}^*$ such that $S_i = \lambda S_j^{\rho}$ with $\rho \in \Aut(\F_{q^n})$. Without loss of generality, we may assume that $i<j$ and $\rho$ is the identity (up to replace the $c_i$'s, $b_i$'s, $\gamma$ and $\overline{S}$) and hence $S_i=\lambda S_j$. Now, we give a case-by-case analysis:
\begin{itemize}
\item[(i=1,j=2)] Here, we have that $ \lambda \overline{S} \subseteq S_1 $. It follows that $S_1^\perp \subseteq (\lambda \overline{S})^\perp$ and $\dim_{\fq}((\lambda\overline{S})^\perp)=n-\ell t=t$.
So, $(\lambda \overline{S})^\perp$ is an $\F_{q^t}$-subspace of $\fqn$ of dimension one.
Consider the ordered basis $\mathcal{B}=(1,b_1,\ldots,b_1^{n-1})$ and its dual basis $\mathcal{B}^*=(\mu_0^*,\mu_1^*,\ldots,\mu_{n-1}^*)$.
It follows that $S_1^\perp=c_1^{-1}\langle \mu_{n-2}^*,\mu_{n-1}^* \rangle_{\fq}$ and by Lemma \ref{cor:dualbasis} we have that
\[ \mu_{n-2}^*=\delta^{-1}(a_{n-1}+b_1), \]
and
\[ \mu_{n-1}^*=\delta^{-1}, \]
where $f(x)=a_0+a_1x+\ldots+a_{n-1}x^{n-1}+x^n$ is the minimal polynomial of $b_1$ over $\fq$ and $\delta=f'(b_1)$.
Now, since $\mu_{n-2}^*,\mu_{n-1}^* \in (\lambda \overline{S})^\perp$ and since $(\lambda \overline{S})^\perp$ is a $1$-dimensional $\F_{q^{t}}$-subspace, it follows
\[ \frac{\mu_{n-2}^*}{\mu_{n-1}^*}=a_{n-1}+b_1 \in \F_{q^{t}}, \]
that is $b_1 \in \F_{q^{t}}$, a contradiction.
    \item[(i=1,j=3)] Since $S_1=c_1\langle 1,b_1 \ldots,b_1^{n-3} \rangle_{\F_{q}}$, then $S_1 \cap b_1 S_1=c_1\langle b_1, \ldots,b_1^{n-3} \rangle_{\F_{q}}$. Since $S_3$ is an $\F_{q^2}$-subspace then also $S_1$ has to be an $\F_{q^2}$-subspace and consequently $S_1 \cap b_1 S_1$ is an $\F_{q^2}$-subspace as well. Then $2$ divides $\dim_{\fq}(S_1 \cap b_1 S_1)=n-3$, this is a contradiction since $n$ is even. 
    \item[(i=2,j=3)] Since $S_2=\overline{S} \oplus c_2 \langle 1,b_2,\ldots,b_2^{t-3} \rangle_{\F_q}$, then $S_2 \cap b_2S_2= \overline{S} \oplus  c_2 \langle b_2,\ldots,b_2^{t-3} \rangle_{\F_q}$. Since $S_3$ is  an $\F_{q^2}$-subspace, then we can argue as in the previous case getting a contradiction to the fact that $n$ is even.
    %then also $S_2$ is an $\F_{q^2}$-subspace as well and so also $S_2 \cap b_2 S_2$. Then $2$ divides $n-3$, this is a contradiction since $n$ is even. 
\end{itemize}
\end{proof}

Consequently to Lemma \ref{lem:sisjinequivalent}, we show that any two different $\fq$-subspaces obtained in Theorem \ref{cor:classn-2club} for different cases are $\Gamma\mathrm{L}(2,q^n)$-inequivalent.

\begin{theorem} \label{th:gammaln-2club}
Consider the following $\fq$-subspaces:
\begin{itemize}
    \item[(1)] Let 
    \[U_1=\left\{ \left(\sum_{i=0}^{n-3}c_1x_ib_1^i+\alpha+\beta a_1,\alpha +\beta b_1\right) \colon x_i,\alpha,\beta \in \F_{q} \right\},\] with $a_1,b_1,c_1 \in \F_{q^n}^*$ such that:
    \begin{itemize}
        \item $\F_q(b_1)=\F_{q^n}$,
        \item  $1 \in c_1\langle 1, b_1,\ldots,b_1^{n-3},b_1^{n-2}\rangle_{\F_{q}}$
        \item $a_1 \notin c_1\langle 1, b_1,\ldots,b_1^{n-3},b_1^{n-2}\rangle_{\F_{q}}$,
    \end{itemize}
    \item[(2)] if $n=t(\ell+1)$, with $\ell \in \mathbb{N}$, let
    \[U_2=\left\{ \left(\overline{s}+\sum_{i=0}^{t-3}c_2x_ib_2^i+\alpha+\beta a_2,\alpha +\beta b_2\ \right)  \colon x_i,\alpha,\beta \in \F_{q},\overline{s} \in \overline{S} \right\},\] with $a_2,b_2,c_2 \in \F_{q^n}^*$, $\overline{S} \subseteq \F_{q^n}$ such that:
    \begin{itemize}
        \item $\F_q(b_2)=\F_{q^t}$, with $3 \leq t \leq n-3$,
        \item $\overline{S}$
        $\F_{q^t}$-subspace of dimension $\ell$ such that $c_2\F_{q^t} \cap \overline{S}=\{0\}$,
        \item $1 \in \overline{S}\oplus c_2\langle 1,b_2,\ldots,b_2^{t-2} \rangle_{\F_q} $,
        \item $a \notin \overline{S}\oplus c_2\langle 1,b_2,\ldots,b_2^{t-2} \rangle_{\F_q} $,
    \end{itemize} 
    \item[(3)] if $n$ is even, let
    \[
    U_3=\left\{ \left(\sum_{i=0}^{n/2-2}c_3x_i\gamma^i+\alpha+\beta a_3,\alpha +\beta b_3\right)  \colon x_i \in \F_{q^2},\alpha,\beta \in \F_{q} \right\},
    \]
    with $a_3,b_3,c_3,\gamma \in \F_{q^n}^*$, such that: \begin{itemize}
        \item $\F_q(b_3)=\F_{q^2}$,
        \item $\F_{q^2}(\gamma)=\F_{q^n}$,
        \item $1 \in c_3\langle 1,\gamma,\ldots,\gamma^{n/2-2}\rangle_{\F_{q^2}}$,
        \item $a_3 \notin c_3\langle 1,\gamma,\ldots,\gamma^{n/2-2}\rangle_{\F_{q^2}}$.
    \end{itemize}
    Then two $\fq$-subspaces $U_i$ and $U_j$ (with $i\ne j$) are $\Gamma\mathrm{L}(2,q^n)$-inequivalent.
\end{itemize}
\end{theorem}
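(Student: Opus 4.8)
The plan is to prove the inequivalence of $U_1, U_2, U_3$ by reducing the question to the inequivalence of the weight-$(n-2)$ ``core'' subspaces $S_1, S_2, S_3$ already handled in Lemma \ref{lem:sisjinequivalent}. The key observation is that each $L_{U_i}$ is an $(n-2)$-club, so it has a \emph{unique} point of weight $n-2$, namely $\langle(1,0)\rangle_{\F_{q^n}}$ (by Proposition \ref{prop:formclub} and Theorem \ref{th:classclass}). Any $\GammaL(2,q^n)$-equivalence $\varphi$ between $U_i$ and $U_j$ must therefore map this distinguished point to itself, which forces the associated matrix to be lower triangular, i.e.\ of the form $\begin{pmatrix} A & 0 \\ C & D\end{pmatrix}$ with $A, D \neq 0$, composed with a field automorphism $\rho$.

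First I would record precisely the form of the first coordinate of $U_i$: intersecting $U_i$ with the line $\langle(1,0)\rangle_{\F_{q^n}}$ recovers exactly $S_i \times \{0\}$, where $S_i$ is the subspace from Lemma \ref{lem:sisjinequivalent} (one checks $1 \in S_i$ and that $S_i$ is the $h=n-2$ dimensional piece). Since the equivalence $\varphi$ fixes $\langle(1,0)\rangle_{\F_{q^n}}$, it must send $U_i \cap \langle(1,0)\rangle_{\F_{q^n}}$ onto $U_j \cap \langle(1,0)\rangle_{\F_{q^n}}$; applying the lower-triangular matrix, the action on this line is multiplication by $A$ after applying $\rho$. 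This yields the relation
\[
A \, S_i^{\rho} = S_j,
\]
so that $S_i$ and $S_j$ are related exactly by the scalar-and-automorphism equivalence $S_j = \lambda S_i^{\rho}$ with $\lambda = A \in \F_{q^n}^*$. But Lemma \ref{lem:sisjinequivalent} states precisely that no such $\lambda$ and $\rho$ exist when $i \neq j$. This contradiction shows $U_i$ and $U_j$ cannot be $\GammaL(2,q^n)$-equivalent, completing the proof.

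The main obstacle, and the step requiring care, is justifying that the equivalence must fix the weight-$(n-2)$ point and hence be lower triangular. This is where the club structure is essential: a $\GammaL$-equivalence preserves the weight distribution of the linear set, and since $\langle(1,0)\rangle_{\F_{q^n}}$ is the \emph{only} point of weight $n-2$ in each $L_{U_i}$ (all others having weight one), any equivalence $\varphi$ satisfying $\begin{pmatrix} A & B \\ C & D\end{pmatrix} U_i^{\rho} = U_j$ must carry this unique heavy point to the unique heavy point, forcing $B=0$ and $A, D \neq 0$. Once this triangular form is secured, the extraction of the relation $A S_i^{\rho} = S_j$ on the first-coordinate subspace is a direct computation, and the conclusion follows immediately from Lemma \ref{lem:sisjinequivalent}. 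A minor point to verify is that reading off the first-coordinate projection is legitimate: because the matrix is lower triangular, the first coordinate of $\varphi(U_i^{\rho})$ depends only on the first coordinate of $U_i^{\rho}$ (through $A$), so the intersection with $\langle(1,0)\rangle_{\F_{q^n}}$ transforms cleanly as claimed.
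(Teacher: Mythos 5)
You take exactly the paper's route: weight preservation forces any equivalence to fix the unique heavy point $\langle(1,0)\rangle_{\F_{q^n}}$, the resulting triangular shape of the matrix gives $A\,S_i^{\rho}=S_j$ on the first-coordinate subspaces $S_i\times\{0\}=U_i\cap\langle(1,0)\rangle_{\F_{q^n}}$, and Lemma \ref{lem:sisjinequivalent} yields the contradiction, so the proposal is correct and matches the paper's proof. One notational slip should be fixed: with the matrix acting on the left of column vectors, as both you and the paper write it, fixing $\langle(1,0)\rangle_{\F_{q^n}}$ forces $C=0$ (the matrix is $\begin{pmatrix} A & B\\ 0 & D\end{pmatrix}$), not $B=0$; your ``lower triangular'' $\begin{pmatrix} A & 0\\ C & D\end{pmatrix}$ sends $(s,0)$ to $(As,Cs)$ and does not preserve the line, so either swap the roles of $B$ and $C$ or switch to the row-vector convention, after which every subsequent step reads correctly.
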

\begin{proof}
Suppose that $U_i$ and $U_j$ are $\Gamma\mathrm{L}(2,q^n)$-equivalent, then there exist a matrix $\left(\begin{array}{cc} A & B\\ C & D\end{array}\right) \in \mathrm{GL}(2,q^n)$ and $\rho \in \mathrm{Aut}(\fqn)$ such that
\begin{equation}\label{eq:equiv3cases} \left(\begin{array}{cc} A & B\\ C & D \end{array}\right) U_i^\rho =U_j. \end{equation}
Since $\langle(1,0)\rangle_{\fqn}$ is the only point in $L_{U_i}$ and $L_{U_j}$ having weight greater than 1 and the elements of $\mathrm{\Gamma L}(2,q^n)$ preserves the weight of points, then
\[ \left( \begin{matrix} 
A & B \\ C & D
\end{matrix} \right) \left( \begin{matrix} 
1 \\ 0
\end{matrix} \right)=\left( \begin{matrix} 
\rho \\ 0
\end{matrix} \right),
\]
for some nonzero $\rho \in \F_{q^n}$.
This implies that $C=0$, and therefore \eqref{eq:equiv3cases} implies $AS_i^{\rho}=S_j$, where $S_i$ and $S_j$ are as in Lemma \ref{lem:sisjinequivalent}. By applying Lemma \ref{lem:sisjinequivalent} we obtain that such $A$ cannot exist and hence $U_i$ and $U_j$ cannot be $\GammaL (2,q^n)$-equivalent.
\end{proof}

The above result allows us to prove the existence of new examples of $(n-2)$-clubs.

\begin{corollary}
If $6 \nmid n$, the constructions of Case (2) in Corollary \ref{cor:classn-2club} are new.
\end{corollary}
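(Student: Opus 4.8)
The plan is to read ``new'' as \emph{not $\GammaL(2,q^n)$-equivalent to any previously known $(n-2)$-club}, and to establish this by locating each known family among the three mutually inequivalent types singled out in Theorem~\ref{th:gammaln-2club}. By the census recalled in Section~\ref{sec:clubsexamp}, the known $(n-2)$-clubs are exactly Construction~\ref{constr:(n-2)clubpolynomial} and Construction~\ref{constr:clubviascattered}. Remark~\ref{rk:constr1} already identifies Construction~\ref{constr:(n-2)clubpolynomial} (up to projectivity) with the Case~(1) subspaces of Theorem~\ref{th:gammaln-2club}, i.e.\ with $t=n$. For Construction~\ref{constr:clubviascattered} I would first observe that, since its $i$-club has $i=t(r-1)$ or $t(r-1)+1$ and here $n=tr$, the equality $i=n-2$ forces $t\in\{2,3\}$; hence the only $(n-2)$-clubs it produces are the $\F_{q^2}$-examples ($t=2$) and the $\F_{q^3}$-examples ($t=3$) written out in Remark~\ref{vandervoorderm:n-2club}. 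These correspond, respectively, to the Case~(3) subspaces ($t=2$) and to the $t=3$ specialization of the Case~(2) subspaces.

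With this dictionary in place, the bulk of the statement is immediate from Theorem~\ref{th:gammaln-2club}: a Case~(2) subspace is $\GammaL(2,q^n)$-inequivalent to every Case~(1) and every Case~(3) subspace, hence to Construction~\ref{constr:(n-2)clubpolynomial} and to all $\F_{q^2}$-examples of Construction~\ref{constr:clubviascattered}. What remains is to separate a Case~(2) subspace from the $\F_{q^3}$-examples, i.e.\ from the $t=3$ specialization living in the same case; this is exactly where the hypothesis $6\nmid n$ enters. If $3\nmid n$, then no $t=3$ subspace exists for this $n$ (the parameter $t$ must divide $n$), so the $\F_{q^3}$-examples are simply unavailable and there is nothing further to check. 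If instead $3\mid n$, then $6\nmid n$ forces $n$ odd, so the $\F_{q^2}$-examples drop out and one is left only to distinguish a Case~(2) subspace with parameter $t\ne 3$ from one with $t=3$.

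To carry out this last separation I would extract the integer $t$ as a $\GammaL$-invariant of the subspace $S$ of Lemma~\ref{lem:sisjinequivalent}. The point is that a Case~(2) subspace $S=\overline S\oplus c\langle 1,b,\ldots,b^{t-3}\rangle_{\F_q}$ contains the $\F_{q^t}$-subspace $\overline S$ of $\F_q$-dimension $n-t$, and one checks that $\overline S$ is the largest subspace of $S$ stable under multiplication by a subfield $\F_{q^u}\ne\F_q$, so that $t=n-\dim_{\F_q}(\overline S)$ is read off intrinsically. Since multiplication by $\lambda\in\F_{q^n}^*$ and the Frobenius $\rho$ preserve the $\F_{q^u}$-linear structure, $t$ cannot change under an equivalence $S_i=\lambda S_j^{\rho}$; in particular a genuine $t\ge 4$ example is inequivalent to the known $t=3$ one. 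This is the same intersection-dimension and dual-basis bookkeeping as in Lemma~\ref{lem:sisjinequivalent}, now applied to two instances of Case~(2) rather than to two distinct cases.

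The main obstacle is precisely this last step: Theorem~\ref{th:gammaln-2club} and Lemma~\ref{lem:sisjinequivalent} are stated only across the three cases, so they do not by themselves distinguish two Case~(2) subspaces with different values of $t$, and the $\F_{q^3}$-examples of Construction~\ref{constr:clubviascattered} sit inside Case~(2). Making the field-of-linearity invariant $t$ rigorous — verifying that $\overline S$ really is the maximal $\F_{q^u}$-stable subspace and ruling out coincidences for the degenerate divisor configurations of $n$ — is the delicate part; all the remaining comparisons reduce directly to the already established Theorem~\ref{th:gammaln-2club}.
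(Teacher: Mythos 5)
Your first two paragraphs are, in substance, the paper's entire proof: the paper identifies the previously known $(n-2)$-clubs as Constructions \ref{constr:(n-2)clubpolynomial} and \ref{constr:clubviascattered}, places the former in Case (1) via Remark \ref{rk:constr1}, invokes Remark \ref{vandervoorderm:n-2club} to say the latter yields $(n-2)$-clubs only when $2\mid n$ or $3\mid n$, and then stops, implicitly concluding by the pairwise inequivalence of the three cases (Theorem \ref{th:gammaln-2club}). Nothing in the paper corresponds to your third and fourth paragraphs: no attempt is made anywhere to separate two Case (2) subspaces with different parameters $t$, and indeed Lemma \ref{lem:sisjinequivalent} and Theorem \ref{th:gammaln-2club} are only stated across the three types, exactly as you observe.

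The extra step you announce is, however, both a genuine gap in your proposal and an argument that cannot succeed as stated. You never prove that $t$ is a $\GammaL(2,q^n)$-invariant (your maximal-stable-subspace argument is only sketched, and you yourself flag the divisor coincidences as unresolved); but the deeper problem is that no invariant can close the case $3\mid n$, $n$ odd. By your own dictionary, for such $n$ (take $n=9$, $f(x)=x^q$, $a=1$) the $\F_{q^3}$-examples of Construction \ref{constr:clubviascattered} \emph{are} Case (2) subspaces with $t=3$, and they are previously known; hence the assertion ``every construction of Case (2) is new'' is simply false under the literal reading of $6\nmid n$, and the most any separation argument could give is the weaker statement that Case (2) subspaces with $t\neq 3$ are new. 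The statement and the paper's short proof cohere only if ``$6\nmid n$'' is read as ``$n$ is coprime to $6$'' (or at least ``$3\nmid n$''): then Construction \ref{constr:clubviascattered} contributes no $(n-2)$-clubs at all, the only known family lies in Case (1), and your first two paragraphs --- equivalently the paper's argument --- already finish the job, with the invariant machinery superfluous. So you have faithfully reproduced the paper's reasoning and, to your credit, detected the divisibility case it glosses over; but your proposed repair neither is carried out nor could it establish the corollary as literally stated.
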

\begin{proof}
The previously known examples of $(n-2)$-clubs are those described in Constructions \ref{constr:(n-2)clubpolynomial} and \ref{constr:clubviascattered}. By Remark \ref{rk:constr1}, the Construction \ref{constr:(n-2)clubpolynomial} falls in Case (1) of Corollary \ref{cor:classn-2club} and the Constructions \ref{constr:clubviascattered}, by Remark \ref{vandervoorderm:n-2club}, give $(n-2)$-club only when either $2$ or $3$ divides $n$.
\end{proof}

\begin{remark}
Using Theorem \ref{th:gammaln-2club}, the $(n-2)$-clubs of Construction \ref{constr:clubviascattered} (cf. Remark \ref{vandervoorderm:n-2club}) are not $\GammaL(2,q^n)$-equivalent to the family of $(n-2)$-associated with $U_1$ in Theorem \ref{th:gammaln-2club} (that is Construction \ref{constr:(n-2)clubpolynomial}).
\end{remark}

\section{Polynomials defining clubs}\label{sec:polsoficlub}

In this section, we will provide a polynomial description for the known families of clubs and for those we have found in the previous section. As in the above section, in this section we will assume that $n\geq 5$ since if $n \leq 4$ we already have a polynomial description of clubs.

We start with the family of $(n-2)$-clubs of Construction \ref{constr:(n-2)clubpolynomial}.

\begin{theorem}\label{prop:policlub1}
Let $\lambda \in \fqn^*$ such that $\{1,\lambda,\ldots,\lambda^{n-1}\}$ is an $\fq$-basis of $\fqn$, the $\fq$-linear set $L_{\lambda}$ defined as in Construction \ref{constr:(n-2)clubpolynomial} is $\mathrm{PG}\mathrm{L}(2,q^n)$-equivalent to $L_{U_p}$ with $U_p=\{(x,p(x) \colon x \in \F_{q^n}\}$ and 
\[ p(x)=\mathrm{Tr}_{q^n/q}(c_{n-2}x)+\lambda\mathrm{Tr}_{q^n/q}(c_{n-1}x) \]
where $\omega \in \fqn$ is such that $(1,\lambda,\ldots,\lambda^{n-3},\lambda^{n-2}+\omega,\omega\lambda)$ is an order $\fq$-basis of $\fqn$ and $(c_0,\ldots,c_{n-1})$ is its dual basis.
In particular, if $q$ is odd then we may choose $\omega= \lambda^{n-2}$ and in this case
\[c_i=\frac{1}{\delta}\sum_{j=0}^{n-i-1} \lambda^j a_{i+j+1}, \]
for $i \in \{n-2,n-1\}$, where $f(x)=a_0+a_1x+\ldots+a_{n-1}x^{n-1}+x^n$ is the minimal polynomial of $\lambda$ over $\fq$ and $\delta=f'(\lambda)$.
\end{theorem}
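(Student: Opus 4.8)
The plan is to take the explicit description of $L_\lambda$ from Construction~\ref{constr:(n-2)clubpolynomial} and rewrite its defining subspace in the form $U_p=\{(x,p(x)):x\in\fqn\}$ for a suitable linearized polynomial $p$. Recall that $L_\lambda$ consists of the points $\langle(t_1\lambda+\ldots+t_{n-1}\lambda^{n-1},\,t_{n-1}+t_n\lambda)\rangle_{\fqn}$ as $(t_1,\ldots,t_n)$ ranges over $\fq^n$. The first task is to verify that the first coordinate $x=t_1\lambda+\ldots+t_{n-1}\lambda^{n-1}$ ranges over all of $\fqn$ as $(t_1,\ldots,t_n)$ varies, so that $L_\lambda$ really is the graph of a well-defined $\fq$-linear map $x\mapsto p(x)$; equivalently, I want to see that the defining subspace $U$ meets the vertical line $\langle(0,1)\rangle_{\fqn}$ trivially and projects onto the whole horizontal axis. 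Since $\{1,\lambda,\ldots,\lambda^{n-1}\}$ is a basis but the first coordinate only involves $\lambda,\ldots,\lambda^{n-1}$ (no constant term), the natural move is to choose a convenient basis adapted to the two parameters $t_{n-1}$ and $t_n$ that survive in the second coordinate, and this is exactly where the auxiliary element $\omega$ enters.

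The core computation is to express the coefficients $t_{n-1}$ and $t_n$ as $\fq$-linear functionals of $x$, because the second coordinate is literally $t_{n-1}+t_n\lambda$. The standard tool for extracting the coefficient of a basis vector via the trace form is the dual basis: if $(c_0,\ldots,c_{n-1})$ is the dual basis of a chosen ordered basis of $\fqn$, then each coordinate of $x$ in that basis equals $\mathrm{Tr}_{q^n/q}(c_i x)$. Thus I would introduce the ordered basis $\mathcal B=(1,\lambda,\ldots,\lambda^{n-3},\lambda^{n-2}+\omega,\omega\lambda)$, check that it is indeed an $\fq$-basis of $\fqn$ for a generic choice of $\omega$, and identify which two dual-basis functionals pick out precisely the parameters appearing in the second coordinate. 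With $\mathcal B$ chosen so that the last two basis vectors are $\lambda^{n-2}+\omega$ and $\omega\lambda$, a short linear-algebra check should show that the second coordinate $t_{n-1}+t_n\lambda$ of the point equals $\mathrm{Tr}_{q^n/q}(c_{n-2}x)+\lambda\,\mathrm{Tr}_{q^n/q}(c_{n-1}x)$, which is exactly the claimed $p(x)$. The point of bundling $\lambda^{n-2}+\omega$ and $\omega\lambda$ into the basis is to arrange the change of coordinates so that $t_{n-1}$ and $t_n$ become the last two coordinate functionals, and the apparent mixing of $\omega$ is what makes $\mathcal B$ genuinely a basis while keeping the first coordinate of each point equal to $x$.

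For the final clause I would specialize to odd $q$, where the choice $\omega=\lambda^{n-2}$ makes the last two basis vectors $2\lambda^{n-2}$ and $\lambda^{n-1}$; the factor $2$ is invertible precisely because the characteristic is odd, which is why this simplification requires $q$ odd. In that case $\mathcal B$ is, up to scaling its last two entries, the polynomial basis $(1,\lambda,\ldots,\lambda^{n-1})$, so I can apply Corollary~\ref{cor:dualbasis} directly: its formula $\gamma_i=\sum_{j=1}^{n-i}\lambda^{j-1}a_{i+j}$ and $\delta=f'(\lambda)$ yields the dual basis of the polynomial basis, and rescaling the last two vectors by the appropriate powers of $2$ (and $\lambda$) produces the stated closed forms for $c_{n-2}$ and $c_{n-1}$. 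The main obstacle I anticipate is purely bookkeeping: verifying that $\mathcal B$ is a basis and correctly tracking how the nonstandard last two basis vectors reshuffle the dual-basis indices, so that the functionals $\mathrm{Tr}_{q^n/q}(c_{n-2}x)$ and $\mathrm{Tr}_{q^n/q}(c_{n-1}x)$ recover exactly $t_{n-1}$ and $t_n$ and not some linear combination of them. Once the coordinate bookkeeping is pinned down, the $\mathrm{PGL}(2,q^n)$-equivalence is immediate since $U_p$ and $U$ define the same linear set by construction.
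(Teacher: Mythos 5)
Your overall strategy coincides with the paper's (introduce the ordered basis $\mathcal{B}=(1,\lambda,\ldots,\lambda^{n-3},\lambda^{n-2}+\omega,\omega\lambda)$ and use its dual basis to recover $t_{n-1}$ and $t_n$ as trace functionals), but there is a genuine gap: you never apply the element of $\mathrm{GL}(2,q^n)$ that turns the defining subspace into a graph, and several of your intermediate claims are false for the subspace as given. In the original coordinates
\[
U_{\lambda}=\{(t_1\lambda+\cdots+t_{n-1}\lambda^{n-1},\,t_{n-1}+t_n\lambda)\colon (t_1,\ldots,t_n)\in\fq^n\},
\]
the first coordinate spans only the $(n-1)$-dimensional space $\langle \lambda,\ldots,\lambda^{n-1}\rangle_{\fq}$, the parameter $t_n$ does not occur in it at all, and $U_{\lambda}$ meets the vertical line in the one-dimensional $\fq$-subspace $\{(0,t_n\lambda)\colon t_n\in\fq\}$. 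Hence your ``first task'' (checking that $U_{\lambda}$ projects bijectively onto the horizontal axis) cannot succeed, your ``core computation'' (expressing $t_n$ as an $\fq$-linear functional of the first coordinate) is impossible, and your closing claim that ``$U_p$ and $U$ define the same linear set by construction'' is wrong: $L_{U_{\lambda}}$ contains the point $\langle(0,1)\rangle_{\fqn}$ (take $t_n=1$ and all other parameters zero), whereas no linear set of the form $L_{U_p}=\{\langle (x,p(x))\rangle_{\fqn}\}$ does. The two linear sets are only $\mathrm{PGL}(2,q^n)$-equivalent, which is precisely why the statement is phrased in terms of equivalence. Your phrase ``keeping the first coordinate of each point equal to $x$'' is exactly where the argument breaks: no choice of basis of $\fqn$ can make the $\mathcal{B}$-coordinates of the \emph{original} first coordinate equal $(t_1,\ldots,t_n)$, since the original first coordinate does not even depend on $t_n$.

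The missing step is the projectivity the paper applies, induced by the matrix $\left(\begin{smallmatrix}\lambda^{-1}&\omega\\ 0&1\end{smallmatrix}\right)$, which sends $(a,b)$ to $(\lambda^{-1}a+\omega b,\,b)$. The image of $U_{\lambda}$ then has first coordinate
\[
t_1+t_2\lambda+\cdots+t_{n-2}\lambda^{n-3}+t_{n-1}(\lambda^{n-2}+\omega)+t_n\,\omega\lambda,
\]
which is exactly the $\mathcal{B}$-expansion with coefficients $(t_1,\ldots,t_n)$; only after this transformation is the subspace a graph over the first coordinate, and only then do the functionals $\mathrm{Tr}_{q^n/q}(c_{n-2}\,\cdot\,)$ and $\mathrm{Tr}_{q^n/q}(c_{n-1}\,\cdot\,)$ return $t_{n-1}$ and $t_n$, giving $U=U_p$. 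Two further points need attention. First, the existence of an admissible $\omega$ is part of the statement's content and requires an argument: the paper shows the inadmissible values form a set of size $|L_{U_{\lambda}}|-1=q^{n-1}+q^{n-2}<q^n$; your appeal to a ``generic choice'' does not supply this. Second, in the odd-$q$ case the relevant dual vector $c_{n-2}$ is dual to $\lambda^{n-2}+\omega=2\lambda^{n-2}$, hence equals one half of the vector dual to $\lambda^{n-2}$ in the polynomial basis; when invoking Corollary \ref{cor:dualbasis} this factor $2$ must be tracked explicitly (it is glossed over both in your sketch and in the closed formula as printed).
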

\begin{proof}
Consider
\[U_{\lambda}=\{ (t_1\lambda+\ldots+t_{n-1}\lambda^{n-1},t_{n-1}+t_n \lambda) \colon (t_1,\ldots,t_n)\in \fq^n \}.\]
Note that such $\omega$ as in the statement there exists since the set
\[ Y= \left\{ \frac{-\alpha_0-\ldots-\alpha_{n-2}\lambda^{n-2}}{\alpha_{n-2}+\lambda \alpha_{n-1}} \colon \alpha_0,\ldots,\alpha_{n-1} \in \fq, (\alpha_{n-2},\alpha_{n-1})\ne (0,0) \right\} \]
does not cover all the elements of $\fqn$. Indeed, $\lvert Y \rvert = \lvert L_{U_{\lambda}}\rvert -1=q^{n-1}+q^{n-2}$.
Now, applying $\left( \begin{array}{cc} \lambda^{-1} & \omega \\ 0 & 1 \end{array} \right)$ to $U_{\lambda}$ we obtain the following subspace
\[ U=\{ (t_1+\ldots+t_{n-1}\lambda^{n-2}+\omega(t_{n-1}+t_n \lambda),t_{n-1}+t_n \lambda) \}. \]
Since $(c_0,\ldots,c_{n-1})$ is the dual basis of $(1,\lambda,\ldots,\lambda^{n-3},\lambda^{n-2}+\omega,\omega\lambda)$, then
\[ p(t_1+\ldots+t_{n-1}\lambda^{n-2}+\omega(t_{n-1}+t_n \lambda))=t_{n-1}+\lambda t_n, \]
that is $U=U_p$ and the first part of the assertion is proved.
The second part follows from the fact that when $q$ is odd we may choose $\omega=\lambda^{n-2}$ so that $U$ can be written as follows
\[\{ (t_1+\ldots+2t_{n-1}\lambda^{n-2}+t_n \lambda^{n-1},t_{n-1}+t_n \lambda) \colon (t_1,\ldots,t_n)\in \fq^n \}.\]
Since now the first component can be expressed through a polynomial basis the assertion follows by Corollary \ref{cor:dualbasis}.
\end{proof}

\begin{remark}
In the case in which the minimal polynomial of $\lambda$ has degree two or three we may use Corollary \ref{cor:binetrin} to obtain nicer expressions for $p(x)$.
\end{remark}

Regarding Construction \ref{constr:clubviascattered} we can give the relative polynomial description which will rely on the associated scattered polynomial (for $q=2$ see also \cite{DeBoeckVdV19}).

\begin{theorem}\label{th:polscattclub}
Let $n=tr$, $t,r>1$, $f \in \mathcal{L}_{t,q}$ a scattered polynomial. 
Let 
\[U_{a,b}=\left\{ \left(f(x_0)-ax_0,bx_0+\sum_{i=1}^{r-1}x_i \omega^i\right) \colon x_i \in \F_{q^t} \right\},\]
for some fixed $a,b \in \F_{q^t}$ with $b\ne 0$ and $\{1,\omega,\ldots,\omega^{r-1}\}$ an $\F_{q^t}$-basis of $\F_{q^n}$. Then $L_{U_{a,b}}$ is $\mathrm{PG}\mathrm{L}(2,q^n)$-equivalent to $L_{U_p}$ with $U_p=\{(x,p(x) \colon x \in \F_{q^n}\}$ and 
\[ p(x)=\mathrm{Tr}_{q^n/q^t}(f(x)-x).\]
\end{theorem}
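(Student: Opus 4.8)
The plan is to produce an explicit collineation of $\PG(1,q^n)$ carrying $U_{a,b}$ to the graph $U_p$, exploiting the fact that the two coordinates of a vector of $U_{a,b}$ play opposite roles with respect to the subfield $\F_{q^t}$. First I would record the structural observation underlying everything: since $f\in\mathcal{L}_{t,q}$, $a\in\F_{q^t}$ and $x_0\in\F_{q^t}$, the first coordinate $f(x_0)-ax_0$ of every vector of $U_{a,b}$ lies in $\F_{q^t}$, whereas the map $(x_0,\dots,x_{r-1})\mapsto bx_0+\sum_{i=1}^{r-1}x_i\omega^i$ is a bijection of $\F_{q^t}^{\,r}$ onto $\F_{q^n}$ (here the hypotheses $b\neq0$ and that $\{1,\omega,\dots,\omega^{r-1}\}$ is an $\F_{q^t}$-basis are used). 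Hence $U_{a,b}$ is the graph of a function of its second coordinate, and applying the involution $\iota=\begin{pmatrix}0&1\\1&0\end{pmatrix}\in\PGL(2,q^n)$ turns it into a graph over the first coordinate. Thus, after applying $\iota$, it suffices to treat the graph $U_G=\{(y,G(y)):y\in\F_{q^n}\}$ of a suitable $q$-polynomial $G$ whose image lies in $\F_{q^t}$; that is, $L_{U_{a,b}}$ is $\PGL(2,q^n)$-equivalent to $L_{U_G}$.

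Next I would make $G$ explicit by the same dual-basis read-off used in Theorem \ref{prop:policlub1}, but relative to the intermediate field. Letting $(\eta_0,\dots,\eta_{r-1})$ be the dual basis of $(1,\omega,\dots,\omega^{r-1})$ with respect to the nondegenerate $\F_{q^t}$-bilinear form $(u,v)\mapsto\mathrm{Tr}_{q^n/q^t}(uv)$, inversion of the parametrisation gives $bx_0=\mathrm{Tr}_{q^n/q^t}(\eta_0 y)$, whence
\[ G(y)=(f-a\,\mathrm{id})\bigl(b^{-1}\mathrm{Tr}_{q^n/q^t}(\eta_0 y)\bigr). \]
One checks immediately from this formula that $\dim_{\F_q}\ker G$ equals $t(r-1)$ or $t(r-1)+1$ according as $f-a\,\mathrm{id}$ is or is not invertible on $\F_{q^t}$, which recovers the value of $i$ stated in Construction \ref{constr:clubviascattered} and confirms that $G$ is the right polynomial.

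To reach the asserted normal form I would pass to the adjoint. Using the well-known fact that $L_{U_h}$ and $L_{U_{\widehat h}}$ are $\GammaL(2,q^n)$-equivalent, where $\widehat{\,\cdot\,}$ denotes adjunction with respect to $\mathrm{Tr}_{q^n/q}$, together with the identities $\widehat{f-a\,\mathrm{id}}=\widehat f-a\,\mathrm{id}$ and (a short computation using transitivity of the trace) that the adjoint of $u\mapsto\mathrm{Tr}_{q^n/q^t}(\eta_0 u)$ is $u\mapsto\eta_0\mathrm{Tr}_{q^n/q^t}(u)$, and that adjunction reverses composition, one obtains
\[ \widehat G(y)=\eta_0 b^{-1}\,\mathrm{Tr}_{q^n/q^t}\bigl((\widehat f-a\,\mathrm{id})(y)\bigr). \]
A final diagonal element of $\PGL(2,q^n)$ scales away the constant factor $\eta_0 b^{-1}$, reducing the polynomial to $\mathrm{Tr}_{q^n/q^t}(\widehat f(x)-ax)$.

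The main obstacle is this last identification: to match $\mathrm{Tr}_{q^n/q^t}(\widehat f(x)-ax)$ with the asserted $\mathrm{Tr}_{q^n/q^t}(f(x)-x)$ one must see that, under the relative trace and up to a further element of $\PGL(2,q^n)$, the adjoint of $f$ may be traded for $f$ itself and the scalar $a$ normalised to $1$ (the latter being legitimate because $c^{-1}f$ is scattered whenever $f$ is, so $a$ can be absorbed into the scattered polynomial). Keeping careful track of the dual-basis element $\eta_0$ and the scalar $b$ through the adjoint computation, and verifying that each intermediate map is a genuine collineation — which is exactly where $b\neq0$ and the scatteredness of $f$ enter — is the technical heart of the argument.
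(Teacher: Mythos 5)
Your computations, as far as they go, are correct, and your opening move (swap the coordinates and invert the parametrisation via the dual basis of $(1,\omega,\ldots,\omega^{r-1})$ with respect to $\mathrm{Tr}_{q^n/q^t}$) is exactly the skeleton of the paper's proof. But there is a genuine gap, and it is precisely the step you defer as ``the technical heart'': after the adjoint detour you only reach $p'(x)=\mathrm{Tr}_{q^n/q^t}(\widehat f(x)-ax)$, and the trade of $\widehat f$ for $f$ is asserted, never proved. It \emph{can} be proved along your lines, but it needs two ingredients you do not supply: (i) the correct form of the adjoint fact is the point-set \emph{equality} $L_{U_h}=L_{U_{\widehat h}}$ (Csajb\'ok--Marino--Polverino), not merely ``$\GammaL(2,q^n)$-equivalence'' --- this matters because the statement to be proved is about $\PGL(2,q^n)$, so a semilinear equivalence in the middle of your chain would not suffice, whereas equality trivially does (note also that the \emph{subspaces} $U_h$ and $U_{\widehat h}$ need not be $\GammaL$-equivalent); (ii) a second application of that fact: since $\mathrm{Tr}_{q^n/q^t}$ is self-adjoint with respect to $\mathrm{Tr}_{q^n/q}(xy)$ and $f$, having coefficients in $\F_{q^t}$, commutes with $\mathrm{Tr}_{q^n/q^t}$, one gets $\widehat{p'}=(f-a\,\mathrm{id})\circ\mathrm{Tr}_{q^n/q^t}=\mathrm{Tr}_{q^n/q^t}\circ(f-a\,\mathrm{id})$, hence $L_{U_{p'}}=L_{U_{p''}}$ with $p''(x)=\mathrm{Tr}_{q^n/q^t}(f(x)-ax)$. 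Two further slips: your normalisation of $a$ via $c^{-1}f$ breaks down when $a=0$ (use $f+(1-a)\,\mathrm{id}$ instead, which is again scattered and lies in $\mathcal{L}_{t,q}$, and satisfies $(f+(1-a)\,\mathrm{id})(x)-x=f(x)-ax$); and scatteredness of $f$ plays no role whatsoever in the equivalence (it only makes $L_{U_{a,b}}$ a club), so your closing claim that it is needed for the intermediate maps to be collineations is off --- only $b\neq 0$ and $f\in\mathcal{L}_{t,q}$ enter.

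For comparison, the paper kills the difficulty that forced you into adjoints by normalising $b$ \emph{before} swapping: applying $\mathrm{diag}(1,b^{-1})$ to $U_{a,b}$ yields a subspace of the same shape with $b=1$, because $b^{-1}\in\F_{q^t}^*$ fixes $\langle \omega,\ldots,\omega^{r-1}\rangle_{\F_{q^t}}$ setwise. After the swap, the resulting graph polynomial is $f(\mathrm{Tr}_{q^n/q^t}(c_0x))-a\,\mathrm{Tr}_{q^n/q^t}(c_0x)=\mathrm{Tr}_{q^n/q^t}(f(c_0x)-ac_0x)$, by the same commutation $f\circ\mathrm{Tr}_{q^n/q^t}=\mathrm{Tr}_{q^n/q^t}\circ f$, and rescaling the first coordinate by $c_0$ finishes the proof --- no adjoints at all. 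Note finally that both your (completed) route and the paper's proof land on $\mathrm{Tr}_{q^n/q^t}(f(x)-ax)$, not on the literal $\mathrm{Tr}_{q^n/q^t}(f(x)-x)$ of the statement; the coefficient $1$ is only obtained after replacing $f$ by the scattered polynomial $f+(1-a)\,\mathrm{id}$, so the theorem should be read as asserting the form $\mathrm{Tr}_{q^n/q^t}(g(x)-x)$ for \emph{some} scattered $g\in\mathcal{L}_{t,q}$ --- your worry on this point is really an imprecision of the statement itself, not an obstacle your proof must overcome with the same $f$.
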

\begin{proof}
First observe that $U_{a,b}$ is $\mathrm{GL}(2,q^n)$-equivalent to
\[ U=\left\{ \left(f(x_0)-ax_0,x_0+\sum_{i=1}^{r-1}x_i \omega^i\right) \colon x_i \in \F_{q^t} \right\}.\]
Then consider $(c_0,\ldots,c_{t-1})$ be the dual basis of $(1,\omega,\ldots,\omega^{t-1})$.
Noting that if $x=\sum_{i=0}^{t-1}x_i \omega^i$ then $x_0=\mathrm{Tr}_{q^n/q^r}(c_0x)$ and applying $\left( \begin{array}{cc} 0 & 1 \\ 1 & 0 \end{array} \right)$ we obtain that $U_{a,b}$ is $\mathrm{GL}(2,q^n)$-equivalent to 
\[ U'=\{ (x, f(\mathrm{Tr}_{q^n/q^t}(c_0x))-a\mathrm{Tr}_{q^n/q^t}(c_0x)) \colon x \in \F_{q^t} \}, \]
since $f \circ \mathrm{Tr}_{q^n/q^t} = \mathrm{Tr}_{q^n/q^t} \circ f$, and by applying $\left( \begin{array}{cc} c_0 & 0 \\ 0 & 1 \end{array} \right)$ to $U'$ we obtain that $U'$ is $\mathrm{GL}(2,q^n)$-equivalent to $U_p$ and hence the assertion. 
\end{proof}

Now, we give the polynomial description of any possible $(n-2)$-clubs.

\begin{theorem}\label{th:(n-2)clubpoldescript}
Let $L_W$ be an $(n-2)$-club in $\PG(1,q^n)$.
There exist $b \in \fqn\setminus\fq$ and $\xi,\eta \in \fqn^*$ such that $L_W$ is $\mathrm{PGL}(2,q^n)$-equivalent to $L_p$, where
\[ p(x)=\mathrm{Tr}_{q^n/q}(\xi x)+b \mathrm{Tr}_{q^n/q}(\eta x), \]
with $\langle \xi,\eta \rangle_{\fq}^\perp=\overline{S}\oplus c \langle 1,b,\ldots,b^{t-2}\rangle_{\fq}$, where $t=[\fq(b),\fq]$, $c \in \fqn^*$ and $\overline{S}$ an $\F_{q^t}$-subspace of $\fqn$ of dimension $n/t-1$.
\end{theorem}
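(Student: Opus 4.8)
The plan is to combine the classification result of Corollary~\ref{cor:classn-2club} with the dual-basis machinery already developed for polynomial linear sets. By Corollary~\ref{cor:classn-2club}, an arbitrary $(n-2)$-club $L_W$ is $\PGL(2,q^n)$-equivalent to some $L_U$ with
\[
U=\left\{\left(\overline{s}+\sum_{i=0}^{t-3}cx_ib^i+\alpha+\beta a,\ \alpha+\beta b\right)\colon x_i,\alpha,\beta\in\fq,\ \overline{s}\in\overline{S}\right\},
\]
where $n=t(\ell+1)$, $\fq(b)=\fq_{q^t}$, $\overline{S}$ is an $\fq_{q^t}$-subspace of dimension $\ell=n/t-1$ with $c\fq_{q^t}\cap\overline{S}=\{0\}$, the element $1$ lies in $\overline{S}\oplus c\langle 1,b,\ldots,b^{t-2}\rangle_{\fq}$, and $a$ lies outside this same hyperplane. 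The point of maximum weight is $\langle(1,0)\rangle_{\fqn}$, and the first-coordinate space $S:=\overline{S}\oplus c\langle 1,b,\ldots,b^{t-2}\rangle_{\fq}$ is exactly the $(n-2)$-dimensional subspace cut out on that point. So the structural data reduce to a single hyperplane $S$ of dimension $n-2$ together with the element $b\notin\fq$.

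\emph{First} I would recognize that a linear set of rank $n$ described by a subspace $U$ of the form above admits a graph-of-a-polynomial representation precisely when its two distinguished coordinate points behave correctly, and that the natural way to produce the polynomial is to dualize. The key identity is that any $\fq$-linear functional on $\fqn$ is of the form $x\mapsto\mathrm{Tr}_{q^n/q}(\xi x)$ for a unique $\xi\in\fqn$, and an $\fq$-subspace $S$ of codimension $2$ is the common kernel of two independent such functionals, i.e. $S=\langle\xi,\eta\rangle_{\fq}^{\perp}$ for a suitable pair $\xi,\eta$. The desired polynomial
\[
p(x)=\mathrm{Tr}_{q^n/q}(\xi x)+b\,\mathrm{Tr}_{q^n/q}(\eta x)
\]
is then built so that its kernel structure encodes $S$: the image lands in $T=\langle 1,b\rangle_{\fq}$, and since $p$ vanishes exactly on $S$ (dimension $n-2$, matching rank considerations), the point $\langle(1,0)\rangle_{\fqn}$ picks up weight $n-2$ while all remaining points retain weight one, as forced by the $(n-2)$-club property.

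\emph{Concretely}, I would take the same change-of-basis strategy used in Theorem~\ref{prop:policlub1}: apply an element of $\PGL(2,q^n)$ (essentially the matrix swapping and rescaling the two coordinates) to move $U$ into the form $U_p=\{(x,p(x))\colon x\in\fqn\}$, and then read off $p$ from the dual basis of the $\fq$-basis adapted to $S$ and to the pair $(1,b)$. The condition $\langle\xi,\eta\rangle_{\fq}^{\perp}=\overline{S}\oplus c\langle 1,b,\ldots,b^{t-2}\rangle_{\fq}$ then records exactly the classification datum $S$, with $t=[\fq(b):\fq]$ and $\dim_{\fq_{q^t}}\overline{S}=n/t-1$ inherited directly from Corollary~\ref{cor:classn-2club}. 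The $b$ appearing as the coefficient in front of the second trace must be the same $b$ defining the club, because it governs the second coordinate $\alpha+\beta b$, so the image lands in $\langle 1,b\rangle_{\fq}$ and the weight-$(n-2)$ point is $\langle(1,0)\rangle_{\fqn}$.

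\emph{The main obstacle} I expect is verifying that the two linear functionals $\mathrm{Tr}_{q^n/q}(\xi x)$ and $\mathrm{Tr}_{q^n/q}(\eta x)$ can be chosen so that the \emph{combined} map $p$ really produces the subspace $U$ of the classification (not merely a subspace with the right dimension and weight point), including checking that the remaining points all have weight one — that is, that $p$ is genuinely a $1$-fat polynomial realizing the $(n-2)$-club rather than some degenerate linear set. This amounts to ensuring that the kernel $\ker(p)=S$ has the prescribed decomposition and that $p(x)=mx$ has at most one-dimensional kernel for every $m\neq$ the distinguished slope; the decomposition $\langle\xi,\eta\rangle_{\fq}^{\perp}=\overline{S}\oplus c\langle 1,b,\ldots,b^{t-2}\rangle_{\fq}$ is precisely what guarantees this, since it is the classification-certified shape of the weight-$(n-2)$ space. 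Converting the abstract subspace $U$ into the explicit $U_p$ via the dual-basis computation of Corollary~\ref{cor:dualbasis} (as in Theorem~\ref{prop:policlub1}) should then be routine, and the existence of $\xi,\eta$ follows from nondegeneracy of the trace form.
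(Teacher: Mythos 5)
Your proposal follows the same skeleton as the paper's proof (start from Corollary \ref{cor:classn-2club}, move $U$ onto a graph $U_p$ by an explicit projectivity, and read $p$ off a dual basis), but the two steps you declare ``routine'' are exactly where the content lies, and the justification you offer for skipping them is incorrect. First, the projectivity is not a coordinate swap/rescaling but a shear $\begin{pmatrix}1&\omega\\0&1\end{pmatrix}$: the subspace $U$ of Corollary \ref{cor:classn-2club} is a graph over its first coordinate only when $1\notin \overline{S}\oplus c\langle 1,b,\ldots,b^{t-3}\rangle_{\fq}$ (note: this $(n-2)$-dimensional space, not the $(n-1)$-dimensional one with $b^{t-2}$, is the weight-$(n-2)$ space), and the classification does not guarantee this. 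One must therefore prove that there exists $\omega$ making $(s_1,\ldots,s_{\ell t},c,cb,\ldots,cb^{t-3},1+\omega,a+\omega b)$ an $\fq$-basis of $\fqn$; the paper does this by a counting argument ($\lvert L_U\rvert=q^{n-1}+q^{n-2}+1$ is smaller than the number of points of the line, so some point $\langle(1,\omega)\rangle_{\fqn}$ avoids $L_U$). Nothing in your proposal plays this role.

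Second, and more seriously, you claim that any pair $\xi,\eta$ with $\langle\xi,\eta\rangle_{\fq}^\perp=S$ works, because that decomposition ``is precisely what guarantees'' the club property and the equivalence with $L_U$. This is false. Pick $e_1,e_2\in\fqn$ with $\bigl(\mathrm{Tr}_{q^n/q}(\xi e_i),\mathrm{Tr}_{q^n/q}(\eta e_i)\bigr)$ equal to $(1,0)$ and $(0,1)$ respectively; then $U_p=(S\times\{0\})\oplus\langle(e_1,1)\rangle_{\fq}\oplus\langle(e_2,b)\rangle_{\fq}$, and the cross-multiplication computation in the proof of Theorem \ref{teo:existenceclub} shows that $L_p$ has a point of weight $2$ if and only if $e_2-be_1\in S+bS$. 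In the relevant Case (2) of Theorem \ref{th:classclass} the space $S+bS$ is a hyperplane, so there exist bases $(\xi,\eta)$ of $S^\perp$ for which $L_p$ is not a club at all; and even for the good choices, nothing in your argument ties $L_p$ to the \emph{given} club $L_U$, whose $\PGL(2,q^n)$-class depends on the element $a$, which your functionals (depending only on $S$ and $b$) never see. The paper avoids both problems at once by taking $\xi=\xi_{n-2}$ and $\eta=\xi_{n-1}$ to be the dual-basis vectors attached to the last two vectors $1+\omega$ and $a+\omega b$ of the adapted basis, so that the shear carries $U$ exactly onto $U_p$ and the equivalence (hence the club property of $L_p$) is automatic because projectivities preserve weights. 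In short: the functionals must encode $a$ and $\omega$, not merely $\ker p=S$, and establishing that such a choice exists is the actual proof.
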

\begin{proof}
Let $L_W$ be an $(n-2)$-club, then by Corollary \ref{cor:classn-2club} $L_W$ is $\mathrm{PGL}(2,q^n)$-equivalent to $L_{U}$, where $n=t(\ell+1)$, with $\ell \in \mathbb{N}_0$, and
\[U=\left\{ \left(\overline{s}+\sum_{i=0}^{t-3}cx_ib^i+\alpha+\beta a,\alpha +\beta b \right)  \colon x_i,\alpha,\beta \in \F_{q},\overline{s} \in \overline{S} \right\},\] 
with $a,b,c \in \F_{q^n}^*$, $\overline{S} \subseteq \F_{q^n}$  such that
\begin{itemize}
       \item $\F_q(b)=\F_{q^t}$,
        \item $\overline{S}$ is an
       $\F_{q^t}$-subspace of dimension $\ell$ such that $c\F_{q^t} \cap \overline{S}=\{0\}$,
       \item $1 \in \overline{S} \oplus c\langle 1,b,\ldots,b^{t-2} \rangle_{\F_q}$,
        \item $a \notin \overline{S}\oplus c\langle 1,b,\ldots,b^{t-2} \rangle_{\F_q} $.
\end{itemize} 
Now, we prove the existence of $\omega \in \fqn^*$ such that $(s_1,\ldots,s_{\ell t},c,cb,\ldots,cb^{t-3},1+\omega, a+\omega b)$ is an ordered $\fq$-basis, where $\{s_1,\ldots,s_{\ell t}\}$ is an $\fq$-basis of $\overline{S}$.
Indeed, $\omega$ exists if and only if the set
\[ \left\{ \frac{d_1 s_1+ d_2s_2+\ldots+d_{n-2} c b^{n-3}+d_{n-1}+ad_{n}}{d_{n-1}+d_{n}b} \colon d_1,\ldots,d_{n}\in \fq, (d_{n-1},d_{n})\ne (0,0)  \right\} \]
does not cover all the elements of $\F_{q^n}$ and this is equivalent to the fact that there exists a point of the form $\langle (1,\omega) \rangle_{\fqn}$ which is not in $L_{U}$.
This is always true since $L_{U}$ has size $q^{n-1}+q^{n-2}+1$ which is always less than $q^n-1$.
Finally, applying $\left( \begin{array}{cc} 1 & \omega \\ 0 & 1 \end{array} \right)$ to $U$ we obtain $U_p$, where
$p(x)=\mathrm{Tr}_{q^n/q}(\xi_{n-2}x)+b \mathrm{Tr}_{q^n/q}(\xi_{n-1}x)$ and $(\xi_0,\ldots,\xi_{n-1})$ is the dual basis of $(s_1,\ldots,s_{\ell t},c,cb,\ldots,cb^{t-3},1+\omega, a+\omega b)$. This is due to the fact that every element $\delta$ of $\fqn$ may be written as $\delta=\overline{s}+\sum_{i=0}^{t-3}cx_ib^i+\alpha+\beta a$ for some $\overline{s} \in \overline{S}$, $x_0,\ldots,x_{t-3},\alpha, \beta \in \fq$ and
$ p( \delta)= \alpha + b \beta$. Therefore $\xi$ and $\eta$ of the statement correspond to $\xi_{n-2}$ and to $\xi_{n-1}$.
\end{proof}

\section{Blocking sets and KM-arcs}\label{sec:blocksetsKMarcs}

A well-studied topic in finite geometry is the theory of blocking set.
A \emph{blocking set} $\mathcal{B}$ in $\PG(2,q)$ is a set of points with the property that every line meets $\mathcal{B}$ in at least one point. 
An easy example is given by a line, which is also called a \emph{trivial blocking set}.
An important class of blocking sets is given by the blocking sets of R\'edei type. If $\mathcal{B}$ is a blocking set in $\PG(2,q)$ of size $q+t$ with $t<q$ for which there exists a line $\ell$ meeting $\mathcal{B}$ in exactly $t$ points, then $\mathcal{B}$ is called a blocking set of \emph{R\'edei type}. Related to the celebrated linearity conjecture are the linear blocking sets.
An \emph{$\fq$-linear blocking set} $L_W$ in $\PG(2,q^n)$ is any $\fq$-linear set of rank $n+1$, and those of R\'edei type are the one that can be described as follows: let $\ell$ be the R\'edei line of $\mathcal{B}$ and consider $L_U = \ell \cap L_W$ and $v \in W \notin \langle U \rangle_{\F_{q^n}}$. 
Then $L_W=L_{U\oplus \langle v \rangle_{\fq}}$ and $L_U$ is an $\fq$-linear set of rank $n$ contained in the line $\ell$.
The R\'edei line $\ell$ is unique if $L_U$ is a strictly $\fq$-linear set which is not an $(n-1)$-club, equivalently $L_U$ is not $\mathrm{P\Gamma L}(2,q^n)$-equivalent to $L_{\mathrm{Tr}_{q^n/q}}$.

We are now interested in studying those associated with $i$-clubs.

\begin{remark}\label{rem:Redeiblocksets}
Let $\ell_{\infty}$ be the line in $\PG(2,q^n)=\PG(V,\F_{q^n})$ with equation $X_2=0$ and consider $L_U \subseteq \ell_{\infty}$ to be an $i$-club with $i\leq n-2$.
Consider $v \in V\setminus \langle U \rangle_{\F_{q^n}}$ and $W=U \oplus \langle v \rangle_{\fq}$. Then $L_W$ is an $\fq$-linear blocking set of R\'edei type in $\PG(2,q^n)$ with size $q^n+q^{n-1}+\ldots+q^i+1$ in which all but one of the points have weight one and the remaining one has weight $i$.
Moreover, for any line $\ell$ then
\[ w_{L_U}(\ell) \in \{1, 2, i+1, n \}. \]
In particular, since $i<n-1$, there exists only one line having weight $n$ and exactly $q^{n-i}$ lines having weight $i+1$.
Moreover, $L_W$ has a $(q+1)$-secant line since $L_U$ contains at least one point of weight one.
\end{remark}

Using the classification proved for $(n-2)$-clubs we obtain the classification of linear blocking sets of small R\'edei type with some constraints on the weight of the points.
Recall that a blocking set in $\PG(2,q^n)$ is said to be \emph{small} if $|\mathcal{B}|\leq 3(q^n+1)/2$.

\begin{theorem}\label{th:classRedeibs}
Suppose that $\mathrm{char}(\fqn)>2$ and $n\geq 5$.
Let $\mathcal{B}$ be a small minimal blocking set in $\PG(2,q^n)$ of R\'edei type and suppose that for every line $s$ different from the R\'edei line $\ell_{\infty}$ we have
\[ |s \cap \mathcal{B}| \in \{ 1,q+1,q^{n-2}+1 \},  \]
and for all of the values there exists at least one line meeting $\mathcal{B}$ in this number of points.

Then $\mathcal{B}$ is $\mathrm{PGL}(3,q^n)$-equivalent to $L_{\overline{U}}$ where $\overline{U}$ has the following form: let $n=t(\ell+1)$, with $\ell \in \mathbb{N}_0$ and
\[\overline{U}=\left\{ \left(\overline{s}+\sum_{i=0}^{t-3}cx_ib^i+\alpha+\beta a,\alpha +\beta b, \delta \right)  \colon x_i,\alpha,\beta, \delta \in \F_{q},\overline{s} \in \overline{S} \right\},\] with $a,b,c \in \F_{q^n}^*$, $\overline{S} \subseteq \F_{q^n}$ such that
\begin{itemize}
    \item $\F_q(b)=\F_{q^t}$,
    \item $\overline{S}$ is an $\F_{q^t}$-subspace of dimension $\ell$ such that $c\F_{q^t} \cap \overline{S}=\{0\}$,
    \item $1 \in \overline{S}\oplus c\langle 1,b,\ldots,b^{t-2} \rangle_{\F_q}$,
    \item $a \notin \overline{S}\oplus c\langle 1,b,\ldots,b^{t-2} \rangle_{\F_q}$.
\end{itemize} 
\end{theorem}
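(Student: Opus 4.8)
The plan is to reduce everything to the classification of $(n-2)$-clubs from Corollary \ref{cor:classn-2club}: I would show that the hypotheses force the intersection of $\mathcal B$ with the R\'edei line to be an $(n-2)$-club, and then read off the shape of $\overline U$. To begin, I would invoke the linearity of small minimal blocking sets of R\'edei type (see \cite{Polverino}), which is where the standing assumption $\mathrm{char}(\fqn)>2$ enters: this gives $\mathcal B=L_W$ for an $\fq$-subspace $W$ of rank $n+1$, with $\fq$-linearity (rather than $\F_{q'}$-linearity for a proper subfield $\F_{q'}$) forced by the existence of $(q+1)$-secants. Writing $\ell_\infty$ for the R\'edei line and $L_U=\ell_\infty\cap L_W$, as in Remark \ref{rem:Redeiblocksets} we have $W=U\oplus\langle v\rangle_{\fq}$ with $v$ off $\ell_\infty$, the set $L_U$ is an $\fq$-linear set of rank $n$ on $\ell_\infty$, and every affine point of $\mathcal B$ (there are exactly $q^n$ of them, since $\mathcal B$ is of R\'edei type) has weight $1$ in $L_W$. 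The whole problem thus reduces to showing that $L_U$ is an $(n-2)$-club.

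Next I would analyse the weights of the secant lines. Recall from \cite{NPSZ2022minsize} that a linear set of rank $k$ on a line has at least $q^{k-1}+1$ points, with equality exactly when it is a $(k-1)$-club, and at most $(q^k-1)/(q-1)$ points by \eqref{eq:card}. For a line $s\ne\ell_\infty$ the linear set $s\cap L_W$ has rank $k:=w_{L_W}(s)$ and size in $\{1,q+1,q^{n-2}+1\}$. Since $n\ge5$, comparing these bounds with the admissible sizes shows $k\notin\{3,\dots,n-2\}$ and $k\ne n$: for $3\le k\le n-2$ every attainable size lies strictly between $q+1$ and $q^{n-2}+1$, and for $k=n$ the minimum size $q^{n-1}+1$ already exceeds $q^{n-2}+1$. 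Hence every secant line has weight in $\{1,2,n-1\}$; a weight-$2$ line meets $\mathcal B$ in a subline of $q+1$ points, and a weight-$(n-1)$ line attains the minimum size $q^{n-2}+1$, hence is an $(n-2)$-club.

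I would then locate the heavy point and exclude weight $2$. A $q^{n-2}+1$-secant $\ell_0$ exists by hypothesis; it is an $(n-2)$-club whose unique heavy point $R$ has weight $n-2\ge3$, so, all affine points of $\mathcal B$ having weight $1$, the point $R$ lies on $\ell_\infty$ and $L_U$ has a point $P_0$ of weight $n-2$. By \eqref{eq:wpointsrank} on $\ell_\infty$ (rank $n$) this $P_0$ is unique and every other point of $L_U$ has weight at most $2$. To rule out a point $Q\ne P_0$ of weight $2$, note that every line through $Q$ other than $\ell_\infty$ has weight $\ge2$, hence weight $2$ or $n-1$; a weight-$(n-1)$ line is an $(n-2)$-club, whose points have weight $1$ or $n-2$, so for $n\ge5$ it cannot pass through the weight-$2$ point $Q$, while a weight-$2$ line has $Q$ as a point of weight equal to its rank and is therefore tangent to $\mathcal B$ at $Q$. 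Thus all $q^n$ lines through $Q$ distinct from $\ell_\infty$ would meet $\mathcal B$ only in $Q$, forcing $\mathcal B$ to have no affine point — contradicting the existence of $q^n$ affine points. Hence $L_U$ has a single point of weight $>1$, of weight $n-2$, i.e.\ it is an $(n-2)$-club.

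Finally, applying Corollary \ref{cor:classn-2club} to $L_U$ on $\ell_\infty$ puts $U$, up to $\PGL(2,q^n)$, in the canonical form with parameters $t,\ell,a,b,c,\overline S$ satisfying the four listed conditions; extending this collineation to $\PGL(3,q^n)$ fixing $\ell_\infty$ and normalising the transversal vector to $v=(0,0,1)$ turns $W=U\oplus\langle v\rangle_{\fq}$ into exactly the subspace $\overline U$ of the statement, with $\delta\in\fq$ the third coordinate. I expect the main obstacle to be the weight-$2$ exclusion above — that is, checking that the admissible intersection numbers genuinely \emph{force} the club structure rather than merely permit it — together with the appeal to the external linearity theorem, which is the one place the hypothesis $\mathrm{char}(\fqn)>2$ is needed.
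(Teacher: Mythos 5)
Your strategy is the same as the paper's: invoke the Ball--Blokhuis--Brouwer--Storme--Sz\H{o}nyi linearity theorem (the paper cites it via \cite[Theorem 4.3 (iv)]{surveybloset}; the $(q+1)$-secant and $\mathrm{char}>2$ enter exactly as you say) to get $\mathcal{B}=L_W$ with $W=U\oplus\langle v\rangle_{\fq}$, prove that $L_U=\ell_\infty\cap\mathcal{B}$ is an $(n-2)$-club, and finish with Corollary \ref{cor:classn-2club}; your write-up is even more detailed than the paper's in the middle step. However, there is a genuine flaw in your justifications: the result you ``recall'' from \cite{NPSZ2022minsize} is false as stated, on both counts. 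The bound $q^{k-1}+1$ on the size of a rank-$k$ linear set on a line requires the linear set to contain a point of weight one (a single point of weight $k$, i.e.\ a tangent line at a point of large weight, is a counterexample), and equality does \emph{not} characterize $(k-1)$-clubs: this very paper exhibits, in Case (2) of Theorem \ref{th:classclass} and in Corollary \ref{cor:cardcasenotclub}, rank-$n$ linear sets of minimum size $q^{n-1}+1$ with $q^{n-3}$ points of weight $2$, which are not clubs. Consequently your blanket claim that no line has weight in $\{3,\dots,n-2\}$ is false in the very configuration you study: tangent lines at the heavy point $P_0$ (these exist, since the $(q^{n-2}+1)$-secants through $P_0$ account for at most $q^2$ of the $q^n$ lines through $P_0$ distinct from $\ell_\infty$) have weight exactly $n-2$. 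Likewise, both places where you conclude ``minimum size, hence an $(n-2)$-club'' are unsupported as written; the second use is even circular, since a weight-$(n-1)$ line through the weight-$2$ point $Q$ would precisely fail to be a club.

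The repair uses only facts you already state elsewhere, so your skeleton is sound. Since $\ell_\infty$ has weight $n$ in the rank-$(n+1)$ set $L_W$, every affine point of $\mathcal{B}$ has weight $1$; hence for any line $s\neq\ell_\infty$ the linear set $s\cap\mathcal{B}$ has at most one point of weight $>1$, namely $s\cap\ell_\infty$, so it is automatically scattered or a club and no minimum-size theorem is needed. The counting identity \eqref{eq:pesivett} then pins down the weights: a $(q^{n-2}+1)$-secant has weight $n-1$ and its $q^{n-2}$ affine points have weight $1$, forcing the point on $\ell_\infty$ to have weight $n-2$; and a weight-$(n-1)$ line through a weight-$2$ point $Q\in\ell_\infty$ would meet $\mathcal{B}$ in $q^{n-2}+q^{n-3}+\cdots+q^2+1$ points, which for $n\geq 5$ lies outside $\{1,q+1,q^{n-2}+1\}$, a contradiction. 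With these substitutions, your restriction of the De Beule--Van de Voorde bound to genuine secant lines (which do contain affine, weight-one points), your tangency argument excluding weight-$2$ points on $\ell_\infty$, and the final application of Corollary \ref{cor:classn-2club} (extended to $\PGL(3,q^n)$ after normalising $v=(0,0,1)$, exactly as the paper implicitly does) all go through and recover the paper's much terser proof.
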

\begin{proof}
Since $\mathcal{B}$ is a small minimal blocking set in $\PG(2,q^n)$ of R\'edei type with a $(q+1)$-secant line, then $\mathcal{B}$ is an $\fq$-linear set, that is $\mathcal{B}=L_W$, see \cite[Theorem 4.3 (iv)]{surveybloset} (which relies on the well-celebrated results in \cite{Ball,BBBSSz}).
Since $L_W$ is an $\fq$-linear blocking set of R\'edei type, then we have $W=U\oplus \langle v \rangle_{\fq}$ and $L_U$ is an $\fq$-linear set of rank $n$ contained in the R\'edei line $\ell_{\infty}$. 
Since every line different from the R\'edei line has weight $1$, $2$ or $n-1$ in $L_W$, it follows that the points of $L_U$ have weight either $1$ or $n-2$ and $n\geq 5$, it results that $L_U$ is an $(n-2)$-club and the assertion now follows by Corollary \ref{cor:classn-2club}.
\end{proof}

\begin{remark}
Since \cite[Theorem 4.3 (iv)]{surveybloset} needs the assumption $\mathrm{char}(\fqn)>2$, in the above statement we have to add such a condition. However, if we add the assumption on $\mathcal{B}$ to be linear, then the above result holds for every characteristic.
\end{remark}

Moreover, from the Construction \ref{constr:clubviascattered}, we have examples of linear blocking sets of R\'edei type of the following form.

\begin{remark}\label{prop:constbsfromclubs}
Let $n=rt$, $t,r>1$, $f \in \mathcal{L}_{t,q}$ a scattered polynomial. 
Let 
\[U_{a,b}=\left\{ \left(f(x_0)-ax_0,bx_0+\sum_{i=1}^{r-1}x_i \omega^i,0\right) \colon x_i \in \F_{q^t} \right\},\]
for some fixed $a,b \in \F_{q^t}$ with $b\ne 0$ and $\{1,\omega,\ldots,\omega^{r-1}\}$ an $\F_{q^t}$-basis of $\F_{q^n}$.
Let 
\[ i=\left\{ \begin{array}{ll} t(r-1), & \text{if}\,\, f(x)-ax\,\, \text{is invertible over}\,\,\F_{q^t},\\
t(r-1)+1, & \text{otherwise}.\end{array} \right. \]
Then
\[ W=U_{a,b}\oplus \langle (0,0,1) \rangle_{\fq} \]
defines an $\fq$-linear blocking set of R\'edei type with size $q^{n}+q^{n-1}+\ldots+q^i+1$.
\end{remark}

We now prove that in the family of blocking set described in the above proposition, there are several inequivalent examples.
To this aim we recall the following result.

\begin{lemma}\cite[Proposition 2.3]{BoPol}\label{lem:equivBoPol}
Let $L_W$ be an $\fq$-linear blocking set with a $(q+1)$-secant line. Then $L_W=L_{W'}$ if and only if there exists $\lambda \in \fqn^*$ such that $W'=\lambda W$.
\end{lemma}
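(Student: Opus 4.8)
The plan is to prove the two implications separately, the first being immediate and the second being the substantive one. For the ``if'' direction, suppose $W'=\lambda W$ with $\lambda\in\fqn^*$. Since $\langle\lambda u\rangle_{\fqn}=\langle u\rangle_{\fqn}$ for every $u\in W\setminus\{0\}$, multiplication by $\lambda$ is a bijection $W\setminus\{0\}\to W'\setminus\{0\}$ preserving the associated projective point, so $L_{W'}=L_{\lambda W}=L_W$ and nothing more is needed.

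For the ``only if'' direction, assume $L_W=L_{W'}$, where $W,W'$ are $(n+1)$-dimensional $\fq$-subspaces of $V=\fqn^3$. First I would exploit the $(q+1)$-secant line $\ell=\PG(W_\ell,\fqn)$. On $\ell$ the intersection $L_W\cap\ell$ is an $\fq$-linear set of rank $2$ with exactly $q+1$ points, hence scattered; equivalently $M:=W\cap W_\ell$ is $2$-dimensional over $\fq$ and $L_M$ is a subline $\cong\PG(1,q)$, all of whose points have weight $1$ in $L_W$. The same holds for $W'$, and a subline is defined by its $2$-dimensional $\fq$-subspace uniquely up to an $\fqn$-scalar (every subline being $\mathrm{PGL}(2,q^n)$-equivalent to the canonical $\PG(1,q)=L_{\fq^2}$, whose defining subspaces are exactly the $\mu\,\fq^2$). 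Hence there is $\lambda_0\in\fqn^*$ with $W'\cap W_\ell=\lambda_0 M$, and replacing $W'$ by $\lambda_0^{-1}W'$ (which changes neither $L_{W'}$ nor the truth of $W'=\lambda W$) I may assume $W\cap W_\ell=W'\cap W_\ell=M$. Choosing coordinates so that $\ell\colon X_2=0$ and $M=\langle e_1,e_2\rangle_{\fq}$, this means $e_1,e_2\in W\cap W'$ and that the $q+1$ subline points carry the same representatives in $W$ and in $W'$.

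Next I would propagate this agreement off $\ell$. Let $\langle w_0\rangle$ be a weight-$1$ point of $L_W$ not on $\ell$, let $m\in M\setminus\{0\}$, and consider the line $\ell'=\langle\langle w_0\rangle,\langle m\rangle\rangle$. Since $m,w_0\in W$ represent distinct projective points, $\dim_{\fq}(W\cap W_{\ell'})\geq 2$; and if this dimension is exactly $2$, then $W\cap W_{\ell'}=\langle w_0,m\rangle_{\fq}$ defines a subline (a rank-$2$ linear set with at least two points is automatically scattered). Applying the uniqueness step above on $\ell'$ gives $W'\cap W_{\ell'}=\lambda'\langle w_0,m\rangle_{\fq}$ for some $\lambda'\in\fqn^*$; but $m\in W'\cap W_{\ell'}$ forces $\lambda'\in\fq^*$ (writing $m=\lambda'(\alpha w_0+\beta m)$ with $\alpha,\beta\in\fq$ and using that $w_0,m$ are $\fqn$-independent yields $\alpha=0$ and $\lambda'\beta=1$), whence $W'\cap W_{\ell'}=\langle w_0,m\rangle_{\fq}$ and in particular $w_0\in W'$. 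Thus every weight-$1$ point off $\ell$ that can be joined to a subline point by a \emph{weight-$2$} line contributes its representative to $W\cap W'$. Since $L_W$ has a $(q+1)$-secant it is not contained in a line, so being a blocking set it spans $\PG(2,q^n)$; provided the representatives reachable in this way $\fq$-span $W$, we obtain $W\subseteq W'$, and equality of $\fq$-dimensions gives $W=W'$, i.e.\ $W'=\lambda W$ before the rescaling.

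The hard part will be the final span: guaranteeing that the lines $\ell'$ joining subline points to weight-$1$ points off $\ell$ include enough genuine $(q+1)$-secants (equivalently, lines of weight exactly $2$) so that the propagated vectors, together with $M$, $\fq$-span the $(n+1)$-dimensional space $W$. This is where the hypotheses are really used: one must combine the blocking property (every line meets $L_W$) and the spanning of the plane with a weight count on the joining lines $\ell'$, in the spirit of the line-weight analysis of Remark \ref{rem:Redeiblocksets}, to exclude the degenerate case in which all such joining lines have weight $\geq 3$ and the reachable points fail to span $W$. Once the spanning is secured, the remainder is the elementary linear algebra carried out above.
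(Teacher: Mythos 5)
Your ``if'' direction, the rigidity statement for sublines (two $2$-dimensional $\fq$-subspaces defining the same $q+1$ collinear points are $\fqn$-proportional), and the propagation mechanism along weight-$2$ lines are all correct, and they are the natural opening moves. Two smaller remarks: the claim that a $(q+1)$-secant forces $M=W\cap W_\ell$ (and likewise $W'\cap W_\ell$) to be $2$-dimensional is asserted rather than proved --- it needs the fact that a linear set of rank $k\geq 3$ on a line with at least two points has more than $q+1$ points, which follows from a short count using \eqref{eq:pesivett} and \eqref{eq:wpointsrank} or from the minimum-size bound of \cite{DeBeuleVdV}; and note that the paper itself gives no proof of this lemma (it quotes \cite[Proposition 2.3]{BoPol}), so your attempt has to stand entirely on its own.

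It does not, because the step you yourself label ``the hard part'' is exactly the mathematical content of the statement, and it is never carried out. What you prove is conditional: \emph{if} a weight-one point of $L_W$ off $\ell$ is joined to some subline point by a line of weight exactly $2$, \emph{then} its representative lies in $W'$. What is missing is any argument that the representatives reachable this way $\fq$-span the $(n+1)$-dimensional space $W$; you only describe the kind of argument (``blocking property plus a weight count'') that might do it. This is a genuine obstruction, not bookkeeping: any point $Q\in L_W\setminus\ell$ of weight $j\geq 2$ forces every line joining $Q$ to a subline point to have weight at least $j+1\geq 3$, so the entire $j$-dimensional space $W\cap\langle q\rangle_{\fqn}$ is invisible to your propagation, and nothing in the proposal bounds from below the dimension of the span of the reachable vectors. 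The tools you gesture at --- the blocking hypothesis together with the pencil identity $q^{n}-1=\sum_{\ell''\ni P}\bigl(q^{\,w_{L_W}(\ell'')-1}-1\bigr)$ taken over the lines $\ell''$ through a subline point $P$ --- are plausibly the right ones, but converting them into the required spanning statement is precisely the substance of the proof of \cite[Proposition 2.3]{BoPol}. As written, your argument establishes the lemma only under an unproven hypothesis, so it is a proof strategy with a gap rather than a proof.
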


Let $L_W$ and $L_{W'}$ be two $\fq$-linear sets of R\'edei type with R\'edei lines $\ell_1$ and $\ell_2$, respectively, and let $\ell_1\cap L_W=L_{U}$ and $\ell_2\cap L_{W'}=L_{U'}$. Then by the above lemma $L_{W}$ and $L_{W'}$ are $\mathrm{P\Gamma L}(3,q^n)$-equivalent if and only if $W$ and $W'$ are $\mathrm{\Gamma L}(3,q^n)$-equivalent. Moreover, if $L_W$ and $L_{W'}$ has only one R\'edei line, then they are $\mathrm{P\Gamma L}(3,q^n)$-equivalent if and only if $U$ and $U'$ are $\mathrm{\Gamma L}(2,q^n)$-equivalent.
Therefore, from Theorem \ref{th:equivscattclub} we have the following result.

\begin{corollary}
The number of $\mathrm{P\Gamma L}(3,q^n)$-inequivalent blocking sets of the family described in Remark \ref{prop:constbsfromclubs} coincide with the number of $\mathrm{\Gamma L}(2,q^t)$-inequivalent scattered $\fq$-subspaces in $\F_{q^t}^2$ of dimension $t$.
\end{corollary}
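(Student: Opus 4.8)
The plan is to transport the classification through the chain \emph{blocking set} $\rightsquigarrow$ \emph{$i$-club on the R\'edei line} $\rightsquigarrow$ \emph{scattered subspace of $\F_{q^t}^2$}, using at each stage an equivalence result already available. First I would observe that each $L_W$ in the family of Remark \ref{prop:constbsfromclubs} is a linear blocking set of R\'edei type whose section with $\ell_\infty$ is the $i$-club $L_{U_{a,b}}$ of Construction \ref{constr:clubviascattered}; as long as $i\le n-2$ the R\'edei line is unique, so by Lemma \ref{lem:equivBoPol} together with the discussion preceding the statement, two such blocking sets are $\mathrm{P\Gamma L}(3,q^n)$-equivalent precisely when the defining subspaces $U_{a,b}$ and $U_{a',b'}$ are $\GammaL(2,q^n)$-equivalent. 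The only value of $i$ that can exceed $n-2$ is $i=n-1$, forced by $t=2$ with $f(x)-ax$ non-invertible; I would dispose of this degenerate case by hand, noting that there every club collapses to $L_{\mathrm{Tr}_{q^n/q}}$ and, correspondingly, every scattered subspace of $\F_{q^2}^2$ is of pseudoregulus type, so the two sides each contribute a single class.

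Next I would invoke Theorem \ref{th:equivscattclub}, which says that $U_{a,b}$ and $U_{a',b'}$ are $\GammaL(2,q^n)$-equivalent if and only if the rank-$t$ subspaces $\overline{U}_f=\{(f(x_0)-ax_0,bx_0):x_0\in\F_{q^t}\}$ and $\overline{U}_{f'}$ are $\GammaL(2,q^t)$-equivalent through a map fixing $\langle(0,1)\rangle_{\F_{q^t}}$. Writing $w=bx_0$ exhibits $\overline{U}_f$ as the graph $\{(g(w),w):w\in\F_{q^t}\}$ with $g(w)=f(w/b)-(a/b)w$, and $g$ is scattered because $f$ is; hence $\overline{U}_f$ is a scattered $\fq$-subspace of $\F_{q^t}^2$ of dimension $t$. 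Conversely, any scattered subspace of dimension $t$ has at least one point of weight $0$ (there are $q^t+1-\tfrac{q^t-1}{q-1}>0$ of them), which an element of $\GammaL(2,q^t)$ can carry to $\langle(1,0)\rangle_{\F_{q^t}}$; the subspace then becomes a graph over the second coordinate and so equals some $\overline{U}_f$. This already yields a surjection from the set of blocking-set classes onto the set of $\GammaL(2,q^t)$-classes of scattered subspaces.

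The heart of the matter, and the step I expect to be the main obstacle, is to upgrade this surjection to a bijection, i.e.\ to show that the constraint ``fixes $\langle(0,1)\rangle$'' in Theorem \ref{th:equivscattclub} creates no extra classes: if $\overline{U}_f$ and $\overline{U}_{f'}$ are $\GammaL(2,q^t)$-equivalent at all, they should already be equivalent by a map fixing $\langle(0,1)\rangle$. This is delicate because $\langle(0,1)\rangle$ is a priori just one of several points of weight at most $1$, and a general equivalence need not preserve it; indeed the two possible weights of $\langle(0,1)\rangle$ on $\overline{U}_f$, namely $0$ or $1$ according to whether $f(x)-ax$ is invertible, are exactly what records the value of $i$. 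I would attack this by exploiting the freedom in the parameters $a,b$, which range over all admissible values in the family: given an unconstrained equivalence $\psi$ with $\psi(\overline{U}_f)=\overline{U}_{f'}$, I would follow the image $\psi(\langle(0,1)\rangle)$, replace $\overline{U}_{f'}$ by the member $\overline{U}_{f''}(a'',b'')$ of the family realizing that image as its own $\langle(0,1)\rangle$, and then compose with a lower-triangular map so as to return $\psi(\langle(0,1)\rangle)$ to $\langle(0,1)\rangle$ while staying inside a subspace of the prescribed graph form. Carrying out this normalization on the explicit form $\{(g(w),w)\}$, and checking that the lower-triangular subgroup does realize the relevant $\GammaL(2,q^t)$-equivalences between two such graphs, would complete the argument; combined with the surjection of the previous paragraph it gives the asserted equality of counts.
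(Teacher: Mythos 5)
Your first two reductions are exactly the ones the paper uses: Lemma \ref{lem:equivBoPol} together with the uniqueness of the R\'edei line converts $\mathrm{P\Gamma L}(3,q^n)$-equivalence of the blocking sets into $\GammaL(2,q^n)$-equivalence of the subspaces $U_{a,b}$, and Theorem \ref{th:equivscattclub} converts the latter into $\GammaL(2,q^t)$-equivalence of the spaces $\overline{U}_f$ \emph{through maps fixing} $\langle(0,1)\rangle_{\F_{q^t}}$. The paper stops there and reads the corollary off directly; you correctly isolate the remaining identification (point-fixing classes versus unrestricted $\GammaL(2,q^t)$-classes) as the real content of the statement.

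The difficulty is that this identification, which you plan to establish by a normalization argument, is in fact false, so the ``upgrade to a bijection'' cannot be carried out. The quantity $\dim_{\fq}(\overline{U}_f\cap\langle(0,1)\rangle_{\F_{q^t}})=\dim_{\fq}\ker(f(x)-ax)\in\{0,1\}$ is preserved by every equivalence fixing $\langle(0,1)\rangle_{\F_{q^t}}$, and it records $i$, hence the size $q^n+q^{n-1}+\cdots+q^i+1$ of the blocking set; but it is not preserved by unrestricted equivalence. Concretely, fix a scattered $f\in\mathcal{L}_{t,q}$, take $a=f(v)/v$ for some $v\neq 0$ (so $f(x)-ax$ is not invertible) and $a'$ outside the slope set $\{f(u)/u\colon u\neq 0\}$ (so $f(x)-a'x$ is invertible); both choices are admissible in Remark \ref{prop:constbsfromclubs}, and with $b=b'=1$ one has
\[
\overline{U}_2=\{(f(x_0)-a'x_0,x_0)\colon x_0\in\F_{q^t}\}=\begin{pmatrix} 1 & a-a' \\ 0 & 1\end{pmatrix}\overline{U}_1,
\qquad \overline{U}_1=\{(f(x_0)-ax_0,x_0)\colon x_0\in\F_{q^t}\},
\]
so the two scattered subspaces are $\mathrm{GL}(2,q^t)$-equivalent, yet the corresponding blocking sets have different sizes and are therefore $\mathrm{P\Gamma L}(3,q^n)$-inequivalent. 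Hence the surjection you construct from blocking-set classes onto unrestricted scattered classes is genuinely non-injective, and no composition with lower-triangular maps can repair this: such maps fix $\langle(0,1)\rangle_{\F_{q^t}}$ and therefore cannot alter the weight invariant. The same phenomenon undermines your treatment of the degenerate case: for $t=2$ the family produces blocking sets of two distinct sizes (from $i=n-2$ and $i=n-1$), so ``both sides contribute a single class'' is not accurate. What your chain of reductions actually proves is a bijection between blocking-set classes and classes of scattered subspaces under the point-fixing equivalence of Theorem \ref{th:equivscattclub} (equivalently, orbits of pairs consisting of a scattered subspace and a marked point); the passage from that statement to the unrestricted count is precisely the step the paper leaves implicit, and the example above shows it cannot be taken for granted but changes the count.
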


KM-arcs have been originally introduced in \cite{KM} by Korchmar\'os and Mazzocca.
A \emph{KM-arc} $\mathcal{A}$ of type $t$ is a set of $q+t$ points in $\PG(2,q)$ such that every line contains $0,2$ or $t$ points. When $2<t<q$, in \cite[Theorem 2.5]{KM} it has been proved that if KM-arc of type $t$ exists then $q$ is even and $t \mid q$.
Moreover, $\mathcal{A}$ is called a \emph{translation} KM-arc if there exists a line $\ell$ of $\PG(2,q)$ such that the group of elations with axis $\ell$ and fixing $\mathcal{A}$ acts transitively on the points of $\mathcal{A}\setminus \ell$. 
In \cite{DeBoeckVdV2016}, De Boeck and Van de Voorde proved several and important results on translation KM-arcs and pointed out that the main question on studying this object regards for which values of $q$ and $t$ a KM-arc of type $t$ exists in $\PG(2,q)$ and which nonequivalent one there exist. 
Using the link between translation KM-arcs and $i$-clubs established in \cite[Theorem 2.2]{DeBoeckVdV2016} we will give a classification for translation KM-arcs of type $2^{n-2}$ in $\PG(2,2^n)$. Although this result has been already established in \cite[Theorem 4.12]{DeBoeckVdV2016}, our result give a more direct expression of these sets. 
To this aim let's recall \cite[Theorem 2.2]{DeBoeckVdV2016}. 

\begin{construction}\label{constr:KMarcs}
Let $\ell \colon X_2=0 \subset \PG(2,2^n)=\PG(V,\F_{2^n})$ and consider $L_U \subset \ell$ to be an $i$-club. 
Let $v \in V \notin \langle U \rangle_{\F_{2^n}}$ and let $U'=U \oplus \langle v \rangle_{\F_2}$. 
Consider the following set of points
\[ \mathcal{A}(U,v)=(L_{U'} \setminus \ell) \cup (\ell \setminus L_U). \]
In \cite[Theorem 2.1]{DeBoeckVdV2016} it has been proved that $\mathcal{A}(U,v)$ is a translation KM-arc of type $2^i$ in $\PG(2,2^n)$.
\end{construction}

Actually, all the translation KM-arcs can be constructed as before.

\begin{theorem}\cite[Theorem 2.2]{DeBoeckVdV2016}\label{th:KMarcfromclub}
Every translation KM-arc of type $2^i$ in $\PG(2,2^n)$ can be constructed as in Construction \ref{constr:KMarcs} by using an $i$-club.
\end{theorem}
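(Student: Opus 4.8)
The plan is to run Construction \ref{constr:KMarcs} in reverse: starting from an abstract translation KM-arc $\mathcal{A}$ of type $2^i$ in $\PG(2,2^n)$, I would recover an $i$-club on the translation line and show that $\mathcal{A}$ coincides with the output of the construction. First I would fix a coordinate frame so that the translation line is $\ell\colon X_2=0$, identifying the affine plane $\AG(2,2^n)=\PG(2,2^n)\setminus\ell$ with $\F_{2^n}^2$. The group $G$ of elations with axis $\ell$ fixing $\mathcal{A}$ sits inside the full elation group with axis $\ell$, which is exactly the translation group $(\F_{2^n}^2,+)$; being a subgroup of an elementary abelian $2$-group, $G$ is an $\F_2$-subspace of $\F_{2^n}^2$. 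Since a nontrivial elation fixes no affine point, transitivity of $G$ on $\mathcal{A}\setminus\ell$ upgrades to regularity, so $|G|=|\mathcal{A}\setminus\ell|=2^n$ and $|\mathcal{A}\cap\ell|=2^i$. After applying a suitable translation I may assume the origin lies in $\mathcal{A}$; then $\mathcal{A}\setminus\ell$ is the $G$-orbit of the origin, i.e. $\mathcal{A}\setminus\ell=W$ where $W:=G$ is an $n$-dimensional $\F_2$-subspace of $\F_{2^n}^2$. Setting $U:=W$ and $v=(0,0,1)$, one checks that $L_{U'}\setminus\ell=W$ with $U'=U\oplus\langle v\rangle_{\F_2}$, so the affine part of $\mathcal{A}$ already matches the construction.

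The core step is to translate the KM-arc intersection numbers into weight conditions for the $\F_2$-linear set $L_W$ on $\ell$. For a point $D=\langle(a,b)\rangle_{\F_{2^n}}\in\ell$ the affine line through the origin of direction $D$ is $\langle(a,b)\rangle_{\F_{2^n}}$, which meets $W$ in $W\cap\langle(a,b)\rangle_{\F_{2^n}}$, a set of size $2^{w}$ with $w=w_{L_W}(D)$; since $W$ is an $\F_2$-subspace, every affine line of direction $D$ that meets $W$ does so in exactly $2^{w}$ points. Intersecting such a line with $\mathcal{A}=W\cup(\mathcal{A}\cap\ell)$ yields $2^{w}+[D\in\mathcal{A}]$ points, a number that must lie in $\{0,2,2^i\}$. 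A parity argument (recall $2^i$ is even) then forces: if $w\geq 1$ then $D\notin\mathcal{A}$ and $w\in\{1,i\}$, while if $w=0$ then $D\in\mathcal{A}$. Hence $\mathcal{A}\cap\ell=\{D\colon w_{L_W}(D)=0\}=\ell\setminus L_W$, and every point of $L_W$ has weight $1$ or $i$.

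Finally I would pin down the weight distribution to confirm that $L_W$ is genuinely an $i$-club. Writing $N_1,N_i$ for the numbers of points of $L_W$ of weight $1$ and $i$, the counting relations \eqref{eq:pesicard} and \eqref{eq:pesivett} (taken over the base field $\F_2$) give $N_1+N_i=2^n-2^i+1$ and $N_1+(2^i-1)N_i=2^n-1$; subtracting gives $(2^i-2)N_i=2^i-2$, so $N_i=1$ whenever $i\geq 2$, and $|\ell\setminus L_W|=2^i=t$ as required. Assembling the pieces, $\mathcal{A}=(L_{U'}\setminus\ell)\cup(\ell\setminus L_U)$ with $L_U=L_W$ an $i$-club, which is precisely the arc $\mathcal{A}(U,v)$ of Construction \ref{constr:KMarcs}. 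I expect the main obstacle to be the bookkeeping in the parity and counting step—making rigorous that the weight-zero directions are exactly the arc points on $\ell$ and that a single heavy point survives—rather than any one deep idea; the conceptual crux is the passage, via regularity of $G$, from the arc to the subspace $W$ and its associated linear set $L_W$.
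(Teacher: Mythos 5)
The paper itself offers no proof of this statement: it is imported with a citation from De Boeck and Van de Voorde \cite{DeBoeckVdV2016} (their Theorem 2.2), so there is no internal argument to compare yours against. Your reconstruction is correct and is essentially the standard argument behind the cited result: realize the elation group $G$ with axis $\ell$ as an $\F_2$-subspace $W$ of the translation group $(\F_{2^n}^2,+)$, use the fact that nontrivial elations fix no affine point to upgrade transitivity to regularity, identify $\mathcal{A}\setminus\ell$ with (a coset of) $W$, and then convert the $\{0,2,2^i\}$ secant numbers into weight conditions on $L_W$ via the parity count on the lines through a point of $W$; the final count from \eqref{eq:pesicard} and \eqref{eq:pesivett} pins down $N_i=1$. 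Two spots deserve one extra line each. First, you assert $|G|=|\mathcal{A}\setminus\ell|=2^n$ rather than derive it: regularity only gives $|G|=|\mathcal{A}\setminus\ell|$, and you then need that $|G|$ is a power of $2$ (which you have, since $G$ is an $\F_2$-subspace) together with $|\mathcal{A}\setminus\ell|=2^n+2^i-|\mathcal{A}\cap\ell|$ and $|\mathcal{A}\cap\ell|\in\{0,2,2^i\}$; for $2\le i<n$ only the value $2^i$ makes this a power of $2$. Second, the preliminary translation moving a point of $\mathcal{A}$ to the origin can be dispensed with: since the affine part of $\mathcal{A}(U,v)$ is the coset of $U$ determined by the affine point of $v$, you may take $U=W\times\{0\}$ and $v=(p_1,p_2,1)$ for any $p=(p_1,p_2)\in\mathcal{A}\setminus\ell$, so that $L_{U\oplus\langle v\rangle_{\F_2}}\setminus\ell=p+W=\mathcal{A}\setminus\ell$ exactly; this keeps the identification literal instead of up to a further elation. (Also note that for $i=1$ your final subtraction is vacuous, but then all weights equal $1$ and $L_W$ is scattered, which is the degenerate case.)
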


The first example of KM-arc of type $2^{m(\ell-1)}$ in $\PG(2,2^{\ell m})$ was presented in \cite{KM} and can be described via the club $L_{T}$, where $T(x)$ is as in Equation \ref{eq:firstKMarc}; see \cite[Theorem 3.2]{DeBoeckVdV2016}.
The clubs arising from Construction \ref{constr:clubviascattered} define the KM-arcs found by G\'acs and Weiner in \cite{GW2003}.

We are now able to prove the following classification result for translation KM-arcs of type $2^{n-2}$ in $\PG(2,2^n)$. These KM-arcs have been already classified in \cite[Theorem 4.12]{DeBoeckVdV2016}, but our result give a more explicit description.

\begin{theorem}
Let $\mathcal{A}$ be a translation KM-arcs of type $2^{n-2}$ in $\PG(2,2^n)$. Then $\mathcal{A}$ is $\mathrm{PGL}(2,q^n)$-equivalent to $\mathcal{A}(U,(0,0,1))$, where $U$ is described as in Corollary \ref{cor:classn-2club}.
\end{theorem}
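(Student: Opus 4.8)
The plan is to combine three ingredients: the structural reduction Theorem~\ref{th:KMarcfromclub}, the classification Corollary~\ref{cor:classn-2club}, and a normalisation of the auxiliary point appearing in Construction~\ref{constr:KMarcs}. First I would invoke Theorem~\ref{th:KMarcfromclub}: since $\mathcal{A}$ has type $2^{n-2}=2^{i}$ with $i=n-2$, it is of the form $\mathcal{A}(W,v)=(L_{W'}\setminus\ell)\cup(\ell\setminus L_W)$ for some $(n-2)$-club $L_W$ contained in the line $\ell\colon X_2=0$ of $\PG(2,2^n)$ and some $v\in V\setminus\langle W\rangle_{\F_{2^n}}$, where $W'=W\oplus\langle v\rangle_{\F_2}$. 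Because the club $L_W$ has rank $n$ it spans $\ell$, so $\langle W\rangle_{\F_{2^n}}=\ell$ and hence $v$ has nonzero last coordinate.

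Next I would put the club into canonical form and lift the equivalence to the plane. By Corollary~\ref{cor:classn-2club} there is $\psi\in\PGL(2,q^n)$, acting on the line $\ell$, with $\psi(L_W)=L_U$, where $U$ is exactly the subspace described in the statement. If $M\in\mathrm{GL}(2,q^n)$ represents $\psi$, I would lift it to the collineation $\Psi$ of $\PG(2,2^n)$ induced by the block matrix $\mathrm{diag}(M,1)$, which fixes $\ell$ setwise. Since $\Psi$ fixes $\ell$ setwise and restricts to $\psi$ on $\ell$, it sends $\ell\setminus L_W$ to $\ell\setminus L_U$ and $L_{W'}\setminus\ell$ to $L_{\Psi(W')}\setminus\ell$; hence $\Psi(\mathcal{A}(W,v))=\mathcal{A}(U_0,\Psi(v))$ for a subspace $U_0$ with $L_{U_0}=L_U$. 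As $U_0$ and $U$ define the same club, they differ by an $\F_{q^n}$-scalar (this is the content of Lemma~\ref{lem:equivBoPol} in the associated blocking-set language), and this scalar can be absorbed by a homology with axis $\ell$. After this step $\mathcal{A}$ is equivalent to $\mathcal{A}(U,v')$ with $U$ in canonical form and $v'$ a point off $\ell$.

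It then remains to normalise $v'$. Writing $v'=(v_0,v_1,v_2)$ with $v_2\neq 0$, I would apply the collineation fixing $\ell$ pointwise given by
\[
\begin{pmatrix} 1 & 0 & -v_0 v_2^{-1} \\ 0 & 1 & -v_1 v_2^{-1} \\ 0 & 0 & v_2^{-1} \end{pmatrix}.
\]
Such a map (an elation or homology with axis $\ell$) fixes $U\subseteq\ell$ and therefore fixes both $L_U$ and $\ell\setminus L_U$, while it sends $v'$ to $(0,0,1)$. Consequently it maps $\mathcal{A}(U,v')$ onto $\mathcal{A}(U,(0,0,1))$, which is the asserted form, with $U$ exactly as in Corollary~\ref{cor:classn-2club}.

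The step I expect to be the main obstacle is the bookkeeping in the middle paragraph: one must verify that the club-equivalence on the line genuinely lifts to an equivalence of the full planar configuration, i.e.\ that the two pieces $L_{W'}\setminus\ell$ and $\ell\setminus L_W$ transform coherently under $\Psi$, and that the two sources of ambiguity---the difference between a club and an $\fq$-subspace defining it, and the freedom in the choice of $v$---do not yield inequivalent KM-arcs. Both ambiguities are governed by the group of elations and homologies with axis $\ell$, which acts transitively enough on the points off $\ell$ and on the scalars to absorb them; making this transitivity precise (and checking it respects the whole set $\mathcal{A}$) is the only genuinely nonroutine part, the rest being formal manipulation of Construction~\ref{constr:KMarcs}.
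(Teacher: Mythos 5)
Your skeleton is the same as the paper's (reduce to $\mathcal{A}(W,v)$ via Theorem~\ref{th:KMarcfromclub}, put the club in canonical form via Corollary~\ref{cor:classn-2club}, lift to the plane, normalise $v$ to $(0,0,1)$), and your first and last steps are sound --- in particular the final elation argument fixing $\ell$ pointwise is correct. The genuine gap is the middle claim that $L_{U_0}=L_U$ forces $U_0=\lambda U$ for some $\lambda\in\F_{q^n}^*$, which you justify by appeal to Lemma~\ref{lem:equivBoPol}. That lemma concerns two subspaces of $\F_{q^n}^3$ whose associated \emph{planar blocking sets coincide as point sets}; here you only know that two clubs on the line $\ell$ coincide as point sets. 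The planar objects built from $U_0$ and from $U$ need not coincide: the affine part of Construction~\ref{constr:KMarcs} is $\{\langle u+v\rangle_{\F_{2^n}}\colon u\in U_0\}$, which encodes the subspace $U_0$ itself and not merely the point set $L_{U_0}$, so the hypothesis of Lemma~\ref{lem:equivBoPol} is exactly what you are missing and the citation is circular. Worse, the underlying claim is itself nontrivial: that a club point set determines its defining subspace up to an $\F_{q^n}$-scalar is a question about the $\GammaL$-class of the linear set, and a priori the same set of $q^{n-1}+q^{n-2}+1$ points could carry a different weight distribution with respect to $U_0$ (for instance $q^{n-4}+\dots+q+1$ points of weight $2$ and the rest of weight $1$ is compatible with \eqref{eq:pesicard} and \eqref{eq:pesivett}); nothing in the paper rules this out for $(n-2)$-clubs.

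The gap is repairable, and the repair explains why the paper's two-line proof is sound: the classification is actually proved at the level of subspaces, not just of point sets. In Lemma~\ref{lem:canonicalform} (hence in Theorem~\ref{th:classclass} and Corollary~\ref{cor:classn-2club}) the projectivity $\psi$ is induced by an explicit $f\in\mathrm{GL}(2,q^n)$ with $U=f(W)$ \emph{as $\F_q$-subspaces}. Using this stronger output, your lift $\Psi$ induced by $\mathrm{diag}(f,1)$ sends $\mathcal{A}(W,v)$ directly to $\mathcal{A}(U,\Psi(v))$ with $U$ already in canonical form, no scalar absorption is needed, and your final normalisation of $\Psi(v)$ to $(0,0,1)$ (which fixes $U$, $L_U$ and $\ell\setminus L_U$) completes the argument. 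So replace the scalar-absorption step by an appeal to the subspace-level form of the classification; as written, that step is unsupported.
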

\begin{proof}
Since $\mathcal{A}$ be a translation KM-arcs of type $2^{n-2}$ in $\PG(2,2^n)$, by Theorem \ref{th:KMarcfromclub}  $\mathcal{A}$ is $\mathrm{PGL}(2,2^n)$-equivalent to $\mathcal{A}(U,(0,0,1))$ where $L_U$ is an $(n-2)$-club. Using the classification of $(n-2)$-clubs provided in Corollary \ref{cor:classn-2club} we obtain the assertion.
\end{proof}

\section{Linearized polynomials with conditions on its value set}

The problem of estimating the size of the value set of a polynomial over finite field or finding polynomials with large/small value set is a quite classical problem in the theory of polynomials over finite field; see \cite{valuesets}. When considering linearized polynomials, we have more information when considering its quotient with $x$. More precisely, let $f(x)$ be a linearized polynomial, then the value set we are interested in is
\[ \mathcal{V}\left(\frac{f(x)}x\right)=\left\{ \frac{f(\alpha)}{\alpha}  \colon \alpha \in \F_{q^n}^*\right\}. \]
If $f(x)$ is a $q$-polynomial having $\F_q$ as maximum field of linearity, i.e. $f(x)$ does not define an $\F_{q^i}$-linear map from $\F_{q^n}$ to $\F_{q^n}$ for $1<i<n$ and $i \mid n$, by the results in \cite{Ball,BBBSSz}, we have the following bounds
\begin{equation}\label{eq:size}
q^{n-1}+1 \leq \left|\mathcal{V}\left(\frac{f(x)}x\right)\right|\leq \frac{q^n-1}{q-1}.
\end{equation}
This is indeed connected with the directions of $f$.
Consider the projective plane $\mathrm{PG}(2, q^n)$ as the projective closure of  $\mathrm{AG}(2, q^n)$ via the line $\ell_\infty(= \mathrm{PG}(1, q^n))$, that without loss of generality we may assume to have equation $X_2=0$.
Let $f(x)$ be any linearized polynomial in $\mathcal{L}_{n,q}$, then
\[ \mathcal{V}\left(\frac{f(x)}x\right)=\mathcal{D}_f=\left\{\frac{f(x)-f(y)}{x-y} \colon x,y \in \fqn, x \ne y \right\} \]
is the set $\mathcal{D}_f$ of directions determined by the graph of $f$, that is the set $\mathcal{G}_f=\{\langle (x,f(x),1)\rangle_{\fqn} \colon x \in \fqn\}\subseteq \mathrm{AG}(2,q^n)$. 
In the case in which $f(x)$ is a $q$-polynomial having $\F_q$ as maximum field of linearity, then by \eqref{eq:size} we have
\[
q^{n-1}+1 \leq \left|\mathcal{D}_f\right|\leq \frac{q^n-1}{q-1}.
\]
If $\mathcal{D}_f$ is a scattered $\fq$-linear set in $\mathrm{PG}(1,q^n)$, then $\left|\mathcal{V}\left(\frac{f(x)}x\right)\right|= \frac{q^n-1}{q-1}$, whereas if $f(x)=\mathrm{Tr}_{q^n/q}(x)$ or $f(x)$ is as in \cite{NPSZ2022minsize} then $\left|\mathcal{V}\left(\frac{f(x)}x\right)\right|= q^{n-1}+1$. So, both the bounds in \eqref{eq:size} are sharp.

Two polynomials $f$ and $g$ are said to be \emph{equivalent} if there exists $\varphi \in \mathrm{\Gamma L}(3,q^n)$ such that $\varphi (\mathcal{G}_f)=\mathcal{G}_g$, see \cite{CsMPq5}.
Using this definition of equivalence, in \cite[Problem 4]{CsMPq5} the authors asked to classify and to find more examples, up to equivalence, of $q$-polynomials $f(x)\in \mathcal{L}_{n,q}$ such that in the multiset $\{f(\alpha)/\alpha \colon \alpha \in \fqn^* \}$ there is a unique element which is represented more than $q-1$ times, namely $q^i-1$ times for some $i \in \{1,\ldots,n-1\}$. We call these polynomials \emph{$i$-club polynomials} because they correspond to those polynomials for which $L_f$ is an $i$-club.

Up to our knowledge, only two examples of $i$-clubs polynomials were previously known: the one presented in \eqref{eq:firstKMarc} and some examples when $n\leq 5$ (see \cite{BoPol,CsZ2018,BMZZ}).

We will first show the new examples of club polynomials.

\begin{proposition}
Let $\lambda \in \fqn^*$ such that $\{1,\lambda,\ldots,\lambda^{n-1}\}$ is an $\fq$-basis of $\fqn$, $\omega \in \fqn$ is such that $(1,\lambda,\ldots,\lambda^{n-3},\lambda^{n-2}+\omega,\omega\lambda)$ is an order $\fq$-basis of $\fqn$ and $(b_0,\ldots,b_{n-1})$ is its dual basis.
Then
\[ p(x)=\mathrm{Tr}_{q^n/q}(b_{n-2}x)+\lambda\mathrm{Tr}_{q^n/q}(b_{n-1}x) \]
is an $(n-2)$-club polynomial.
Let $n=rt$, $t,r>1$, $f\in \mathcal{L}_{t,q}$ be a scattered $q$-polynomial.
Then 
\[ p(x)=\mathrm{Tr}_{q^n/q^t}(f(x)-x),\]
is an $i$-club polynomial with 
\[ i=\left\{ \begin{array}{ll} t(r-1), & \text{if}\,\, f(x)-x\,\, \text{is invertible over}\,\,\F_{q^t},\\
    t(r-1)+1, & \text{otherwise}.\end{array} \right. \]
\end{proposition}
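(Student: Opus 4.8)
The plan is to prove both claims by recognising each of the two polynomials as the polynomial representative, already manufactured earlier in this section, of a linear set that a previous Construction has shown to be a club; the only substantive point to isolate is that the club property survives the $\mathrm{PGL}(2,q^n)$-equivalences involved.

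For the first polynomial I would start from Construction \ref{constr:(n-2)clubpolynomial}, which asserts that the linear set $L_{\lambda}$ of \eqref{eq:exKMnopol} is an $(n-2)$-club of rank $n$. Theorem \ref{prop:policlub1} then supplies a $\mathrm{PGL}(2,q^n)$-equivalence between $L_{\lambda}$ and $L_{U_p}$, where $U_p=\{(x,p(x))\colon x\in\fqn\}$ and $p(x)=\mathrm{Tr}_{q^n/q}(b_{n-2}x)+\lambda\mathrm{Tr}_{q^n/q}(b_{n-1}x)$; here the dual basis $(b_0,\dots,b_{n-1})$ of the statement is precisely the basis denoted $(c_0,\dots,c_{n-1})$ in Theorem \ref{prop:policlub1}, so the two polynomials literally coincide. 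It remains only to transport the club property across this equivalence.

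The observation I would make explicit is that every element of $\mathrm{PGL}(2,q^n)$ is induced by an $\F_{q^n}$-linear bijection $g$ of $\fqn^2$, which is in particular $\fq$-linear; such a $g$ carries the $\fq$-subspace $U$ onto $gU$ and the point $\langle v\rangle_{\fqn}$ to $\langle gv\rangle_{\fqn}$, so that $g(U\cap\langle v\rangle_{\fqn})=gU\cap\langle gv\rangle_{\fqn}$ and hence $\dim_{\fq}(gU\cap\langle gv\rangle_{\fqn})=\dim_{\fq}(U\cap\langle v\rangle_{\fqn})$. Thus weights of points, and therefore the whole weight distribution of a linear set, are invariants of $\mathrm{PGL}(2,q^n)$-equivalence. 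Applied to the equivalence of the previous paragraph, this shows $L_{U_p}$ inherits the weight distribution of $L_{\lambda}$ and is itself an $(n-2)$-club; by the very definition of an $i$-club polynomial, $p$ is then an $(n-2)$-club polynomial.

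The second polynomial is treated identically: Construction \ref{constr:clubviascattered} shows that $L_{U_{a,b}}$, built from a scattered $f\in\mathcal{L}_{t,q}$, is an $i$-club with $i=t(r-1)$ when $f(x)-x$ is invertible over $\F_{q^t}$ and $i=t(r-1)+1$ otherwise, while Theorem \ref{th:polscattclub} produces a $\mathrm{PGL}(2,q^n)$-equivalence between $L_{U_{a,b}}$ and $L_{U_p}$ for $p(x)=\mathrm{Tr}_{q^n/q^t}(f(x)-x)$. Invoking once more the invariance of the weight distribution, $L_{U_p}$ is an $i$-club with exactly this value of $i$, so $p$ is an $i$-club polynomial. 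I do not expect a genuine obstacle here, since the whole argument is an assembly of Constructions \ref{constr:(n-2)clubpolynomial} and \ref{constr:clubviascattered} with Theorems \ref{prop:policlub1} and \ref{th:polscattclub}; the only care needed is the dual-basis relabelling in the first part, the one-line invariance principle above, and matching the invertibility hypothesis and the value of $i$ reported by Construction \ref{constr:clubviascattered} to the normalisation $p(x)=\mathrm{Tr}_{q^n/q^t}(f(x)-x)$ used in Theorem \ref{th:polscattclub}.
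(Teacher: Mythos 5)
Your proposal is correct and follows exactly the paper's route: the paper's own proof is the one-line observation that $L_p$ is an $i$-club by Constructions \ref{constr:(n-2)clubpolynomial} and \ref{constr:clubviascattered} together with Theorems \ref{prop:policlub1} and \ref{th:polscattclub}, and that a polynomial whose associated linear set is an $i$-club is, by definition, an $i$-club polynomial. You merely make explicit what the paper leaves implicit (the invariance of weight distributions under $\mathrm{PGL}(2,q^n)$-equivalence and the dual-basis relabelling), which is a faithful, slightly more detailed rendering of the same argument.
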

\begin{proof}
It follows from the fact that if $L_f$ is an $i$-club, then $f(x)$ is an $i$-club polynomial.
\end{proof}

We are now able to give a classification of $(n-2)$-club polynomials.

\begin{theorem}
Let $f\in \mathcal{L}_{n,q}$ an $(n-2)$-club polynomial. Then $f$ is equivalent to $p(x)=\mathrm{Tr}_{q^n/q}(\xi x)+b\mathrm{Tr}_{q^n/q}(\eta x)$, where $b,\xi$ and $\eta$ are as in Theorem \ref{th:(n-2)clubpoldescript}.
\end{theorem}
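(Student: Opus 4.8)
The plan is to read the statement as the polynomial reformulation of Theorem~\ref{th:(n-2)clubpoldescript}: the polynomial $p(x)=\Tr_{q^n/q}(\xi x)+b\,\Tr_{q^n/q}(\eta x)$ is exactly the one produced there, so the whole task is to upgrade the $\PGL(2,q^n)$-equivalence of \emph{linear sets} granted by that theorem into the graph equivalence of \emph{polynomials} defined in Section~\ref{sec:rankmetric}. First I would record the elementary observation that $f$ being an $(n-2)$-club polynomial means precisely that $L_f=L_{U_f}$ is an $(n-2)$-club, where $U_f=\{(x,f(x))\colon x\in\fqn\}$; this places $L_{U_f}$ in the scope of Theorem~\ref{th:(n-2)clubpoldescript}.

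The key bridge I would establish is the following elementary lemma: if the graph subspaces $U_f$ and $U_g$ of two $q$-polynomials are $\GammaL(2,q^n)$-equivalent, then $f$ and $g$ are equivalent in the sense of Section~\ref{sec:rankmetric}, i.e.\ $\cG_f$ and $\cG_g$ are $\GammaL(3,q^n)$-equivalent. Concretely, if $U_g=M\,U_f^{\rho}$ with $M\in\mathrm{GL}(2,q^n)$ and $\rho\in\Aut(\fqn)$, then the semilinear map of $\fqn^3$ given by $\left(\begin{smallmatrix} M & 0 \\ 0 & 1 \end{smallmatrix}\right)$ composed with the coordinatewise action of $\rho$ sends $\langle(x,f(x),1)\rangle_{\fqn}$ to $\langle(M(x,f(x))^{\rho},1)\rangle_{\fqn}$; as $x$ runs over $\fqn$ the pair $M(x,f(x))^{\rho}$ runs bijectively over $U_g=\{(y,g(y))\colon y\in\fqn\}$, so this map carries $\cG_f$ onto $\cG_g$. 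This direction is the only one I need, and it may alternatively be quoted from \cite{CsMPq5}.

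It then suffices to produce a $\mathrm{GL}(2,q^n)$-equivalence between $U_f$ and $U_p$. Here I would \emph{not} invoke Theorem~\ref{th:(n-2)clubpoldescript} as a black box, because that statement only asserts equality of the point sets $L_{U_f}$ and $L_{U_p}$ up to $\PGL(2,q^n)$, which does not by itself pin down the defining $\fq$-subspaces. Instead I would follow the subspace-level content of its proof with $W=U_f$: Lemma~\ref{lem:canonicalform} supplies an $\fqn$-linear isomorphism carrying $U_f$ onto the canonical subspace $U$ of Corollary~\ref{cor:classn-2club}, and the final coordinate change by $\left(\begin{smallmatrix}1&\omega\\0&1\end{smallmatrix}\right)$ carries $U$ onto $U_p$. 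Composing these two $\fqn$-linear maps exhibits $U_p=M(U_f)$ for some $M\in\mathrm{GL}(2,q^n)$, whence $U_f$ and $U_p$ are $\GammaL(2,q^n)$-equivalent, and the bridge lemma yields $f\sim p$ with $b,\xi,\eta$ as in Theorem~\ref{th:(n-2)clubpoldescript}.

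The main obstacle is exactly this passage from linear-set equivalence to genuine subspace equivalence: since distinct $\fq$-subspaces can define the same linear set, polynomial equivalence is a strictly finer relation than equivalence of the associated linear sets, and one must track the actual $\fqn$-linear transformations of $U_f$ rather than merely the induced maps on points. Once the chain of transformations in the proof of Theorem~\ref{th:(n-2)clubpoldescript} is recognised as acting at the subspace level, the remainder of the argument is immediate.
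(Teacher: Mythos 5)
Your proposal is correct and takes essentially the same route as the paper: the paper's own two-sentence proof consists exactly of the observations that $f$ is an $(n-2)$-club polynomial if and only if $L_{U_f}$ is an $(n-2)$-club, and that this holds if and only if $U_f$ is $\mathrm{GL}(2,q^n)$-equivalent to $U_g$ with $g$ one of the polynomials of Theorem \ref{th:(n-2)clubpoldescript}. Your explicit bridge lemma (subspace equivalence of graphs implies $\mathrm{\Gamma L}(3,q^n)$-equivalence of $\mathcal{G}_f$ and $\mathcal{G}_g$) and your subspace-level reading of the proof of Theorem \ref{th:(n-2)clubpoldescript} via Lemma \ref{lem:canonicalform} simply supply the details the paper leaves implicit.
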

\begin{proof}
The proof again follows from the fact that $f$ is an $(n-2)$-club if and only if $U_f$ defines an $(n-2)$-club. This can happen if and only if $U_f$ is $\mathrm{GL}(2,q^n)$-equivalent to $U_g$, where $g$ is one of the polynomials described in Theorem \ref{th:(n-2)clubpoldescript}.
\end{proof}

\section{Linear rank metric codes}\label{sec:rankmetric}

Rank metric codes were introduced by Delsarte \cite{Delsarte} in 1978 as subsets of matrices and they have been intensively investigated in recent years because of their applications; we refer to \cite{sheekey_newest_preprint,PZ}.
In this section we will be interested in rank metric codes in $\F_{q^m}^n$.
The \emph{rank} (weight) $w(v)$ of a vector $v=(v_1,\ldots,v_n) \in \F_{q^m}^n$ is defined as the dimension of the vector space generated over $\F_q$ by its entries, i.e $w(v)=\dim_{\fq} (\langle v_1,\ldots, v_n\rangle_{\fq})$. 

A \emph{(linear vector) rank metric code} $\C $ is an $\F_{q^m}$-subspace of $\F_{q^m}^n$ endowed with the rank distance, where such a distance is defined as
\[
d(x,y)=w(x-y),
\]
where $x, y \in \F_{q^m}^n$. 

Let $\C \subseteq \F_{q^m}^n$ be a linear rank metric code. We will write that $\C$ is an $[n,k,d]_{q^m/q}$ code (or $[n,k]_{q^m/q}$ code) if $k=\dim_{\F_{q^m}}(\C)$ and $d$ is its minimum distance, that is 
\[
d=\min\{d(x,y) \colon x, y \in \C, x \neq y  \}.
\]

By the classification of $\F_{q^m}$-linear isometry of $\F_{q^m}^n$ (see \cite{berger2003isometries}), we say that two rank metric codes $\C,\C' \subseteq \F_{q^m}^n$ are \emph{(linearly) equivalent} if and only if there exists a matrix $A \in \mathrm{GL}(n,q)$ such that
$\C'=\C A=\{vA : v \in \C\}$. 
Moreover, we will say that $\C$ and $\C'$ are \emph{semilinearly equivalent} if and only if there exist a matrix $A \in \mathrm{GL}(n,q)$ and an automorphism $\rho \in \mathrm{Aut}(\F_{q^m})$ such that
$\C'=\C^{\rho} A=\{v^{\rho}A : v \in \C\}$,  where the action of $\rho$ is extended entrywise. 
Most of the codes we will consider are \emph{non-degenerate}, i.e. those for which the columns of any generator matrix of $\C$ are $\fq$-linearly independent. Denote by $ \mathfrak{C}[n,k,d]_{q^m/q}$ the set of all linear $[n,k,d]_{q^m/q}$ rank metric codes in $\F_{q^m}^n$.

The geometric counterpart of rank metric are the systems. 
An $[n,k,d]_{q^m/q}$ \emph{system} $U$ is an $\F_q$-subspace of $\F_{q^m}^k$ of dimension $n$, such that
$ \langle U \rangle_{\F_{q^m}}=\F_{q^m}^k$ and
$$ d=n-\max\left\{\dim_{\F_q}(U\cap H) \mid H \textnormal{ is an $\F_{q^m}$-hyperplane of }\F_{q^m}^k\right\}.$$
Moreover, two $[n,k,d]_{q^m/q}$ systems $U$ and $U'$ are \emph{equivalent} if there exists an $\F_{q^m}$-isomorphism $\varphi\in\mathrm{GL}(k,\F_{q^m})$ such that
$$ \varphi(U) = U'.$$
We denote the set of equivalence classes of $[n,k,d]_{q^m/q}$ systems by $\mathfrak{U}[n,k,d]_{q^m/q}$.

\begin{theorem}(see \cite{Randrianarisoa2020ageometric}) \label{th:connection}
Let $\C$ be a non-degenerate $[n,k,d]_{q^m/q}$ rank metric code and let $G$ be an its generator matrix.
Let $U \subseteq \F_{q^m}^k$ be the $\F_q$-span of the columns of $G$.
The rank weight of an element $x G \in \C$, with $x \in \F_{q^m}^k$ is
\begin{equation}\label{eq:relweight}
w(x G) = n - \dim_{\fq}(U \cap x^{\perp}),\end{equation}
where $x^{\perp}=\{y \in \F_{q^m}^k \colon x \cdot y=0\}.$ In particular,
\begin{equation} \label{eq:distancedesign}
d=n - \max\left\{ \dim_{\fq}(U \cap H)  \colon H\mbox{ is an } \F_{q^m}\mbox{-hyperplane of }\F_{q^m}^k  \right\}.
\end{equation}
\end{theorem}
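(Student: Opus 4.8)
The plan is to reduce the weight computation to a single application of the rank--nullity theorem for an $\F_q$-linear map. First I would fix notation: write the columns of $G$ as $u_1,\ldots,u_n \in \F_{q^m}^k$, so that, by non-degeneracy, these are $\F_q$-linearly independent and hence form an $\F_q$-basis of $U=\langle u_1,\ldots,u_n\rangle_{\fq}$. For $x \in \F_{q^m}^k$ the codeword $xG$ is the vector $(x\cdot u_1,\ldots,x\cdot u_n)$, where $x\cdot y$ denotes the standard bilinear form on $\F_{q^m}^k$. By definition the rank weight is $w(xG)=\dim_{\fq}\langle x\cdot u_1,\ldots,x\cdot u_n\rangle_{\fq}$.

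The key step is to introduce the $\F_q$-linear evaluation map $\phi_x\colon U \to \F_{q^m}$, $u \mapsto x\cdot u$, which is the restriction to $U$ of the linear functional $y\mapsto x\cdot y$. Since $u_1,\ldots,u_n$ span $U$, the image $\mathrm{Im}(\phi_x)$ is exactly $\langle x\cdot u_1,\ldots,x\cdot u_n\rangle_{\fq}$, so that $w(xG)=\dim_{\fq}(\mathrm{Im}(\phi_x))$. Its kernel is $\{u\in U\colon x\cdot u=0\}=U\cap x^{\perp}$. Applying rank--nullity to $\phi_x$ and using $\dim_{\fq}(U)=n$ yields $w(xG)=n-\dim_{\fq}(U\cap x^{\perp})$, which is \eqref{eq:relweight}.

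For the ``in particular'' part I would minimize over nonzero codewords. Because $G$ has $\F_{q^m}$-rank $k$, the map $x\mapsto xG$ is injective, so nonzero codewords correspond exactly to $x\neq 0$, and hence $d=\min_{x\neq 0} w(xG)=n-\max_{x\neq 0}\dim_{\fq}(U\cap x^{\perp})$. It then remains to observe that $x\mapsto x^{\perp}$ sets up a bijection between the one-dimensional $\F_{q^m}$-subspaces of $\F_{q^m}^k$ (equivalently, nonzero $x$ up to $\F_{q^m}^*$-scalar) and the $\F_{q^m}$-hyperplanes of $\F_{q^m}^k$; since $x^{\perp}$, and therefore $U\cap x^{\perp}$, depends only on the line spanned by $x$, the two maxima coincide and \eqref{eq:distancedesign} follows. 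There is no genuine obstacle here, as the whole argument is bookkeeping around rank--nullity. The only points requiring care are the use of non-degeneracy, which guarantees that the $u_i$ form a basis so that $\mathrm{Im}(\phi_x)$ is really the $\fq$-span of the codeword's entries, and the standard identification of orthogonal complements with hyperplanes over $\F_{q^m}$.
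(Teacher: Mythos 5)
Your proof is correct. The paper itself states this theorem without proof, citing Randrianarisoa, and your argument — identifying $w(xG)$ with $\dim_{\F_q}(\mathrm{Im}(\phi_x))$ for the evaluation map $\phi_x\colon u\in U\mapsto x\cdot u$, applying rank--nullity, and then matching nonzero $x$ up to $\F_{q^m}^*$-scalars with hyperplanes $x^{\perp}$ — is exactly the standard proof of this correspondence, with the role of non-degeneracy (so that $\dim_{\F_q}(U)=n$) correctly isolated.
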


Actually, the above result allows us to give a one-to-one correspondence between equivalence classes of non-degenerate $[n,k,d]_{q^m/q}$ codes and equivalence classes of $[n,k,d]_{q^m/q}$ systems, see \cite{Randrianarisoa2020ageometric}.
The system $U$ and the code $\C$ and in Theorem \ref{th:connection} are said to be \emph{associated}.

Moreover, the semilinear inequivalence on linear rank metric codes can be read also on the associated systems via the action of $\mathrm{\Gamma L}(k,q^m)$ on the $\fq$-subspaces of $\F_{q^m}^n$.

\begin{theorem}(see \cite{John} and \cite{JohnGeertrui}) \label{th:semilinearequivalence}
Let $\C$ and $\C'$ two linear $[n,k,d]_{q^m/q}$ non-degenerate rank metric codes and let $U$ and $U'$ be two associated systems with $\C$ and $\C'$, respectively.
Then $\C$ and $\C'$ are semilinearly equivalent if and only if $U$ and $U'$ are $\mathrm{\Gamma L}(k,q^m)$-equivalent.
\end{theorem}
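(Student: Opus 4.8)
The plan is to track how the three basic operations on a generator matrix---left multiplication by $\mathrm{GL}(k,q^m)$, right multiplication by $\mathrm{GL}(n,q)$, and the entrywise action of an automorphism $\rho\in\Aut(\F_{q^m})$---are reflected on the associated system, and then to read off each implication from this dictionary. First I would fix a generator matrix $G$ of $\C$, so that $\C=\{xG\colon x\in\F_{q^m}^k\}$ and, by Theorem \ref{th:connection}, $U=\langle u_1,\ldots,u_n\rangle_{\fq}$ where $u_1,\ldots,u_n$ are the columns of $G$; non-degeneracy guarantees that these columns form an $\fq$-basis of $U$. The dictionary then reads: replacing $G$ by $MG$ with $M\in\mathrm{GL}(k,q^m)$ leaves $\C$ unchanged but sends $U$ to $M(U)$; replacing $G$ by $GA$ with $A\in\mathrm{GL}(n,q)$ leaves $U$ unchanged, since the columns of $GA$ are $\fq$-combinations of the $u_i$ and $A$ is $\fq$-invertible; and $G^{\rho}$ generates $\C^{\rho}$ with associated system $U^{\rho}$. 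In particular any two generator matrices of a single code differ by a left factor in $\mathrm{GL}(k,q^m)$, so the system associated with a code is well defined up to $\mathrm{GL}(k,q^m)$-equivalence.

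For the direction from codes to systems, suppose $\C'=\C^{\rho}A$ for some $A\in\mathrm{GL}(n,q)$ and $\rho\in\Aut(\F_{q^m})$. By the dictionary $G^{\rho}A$ is a generator matrix of $\C'$, and the system it produces is the $\fq$-span of the columns of $G^{\rho}A$, which equals the $\fq$-span of the columns of $G^{\rho}$, namely $U^{\rho}$. Since any two systems associated with $\C'$ are $\mathrm{GL}(k,q^m)$-equivalent, $U'$ is $\mathrm{GL}(k,q^m)$-equivalent to $U^{\rho}$; composing this linear map with $\rho$ produces an element of $\mathrm{\Gamma L}(k,q^m)$ carrying $U$ to $U'$.

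For the converse, let $\varphi\in\mathrm{\Gamma L}(k,q^m)$ satisfy $\varphi(U)=U'$, and write $\varphi=M\circ\rho$ with $M\in\mathrm{GL}(k,q^m)$ and $\rho\in\Aut(\F_{q^m})$ acting entrywise, so that $M(U^{\rho})=U'$. The columns of $MG^{\rho}$ are the images $\varphi(u_i)$, hence form an $\fq$-basis of $U'$; but the columns of any generator matrix $G'$ of $\C'$ are also an $\fq$-basis of $U'$, so $MG^{\rho}=G'A^{-1}$ for some $A\in\mathrm{GL}(n,q)$, i.e. $G'=MG^{\rho}A$. Using that $G^{\rho}$ generates $\C^{\rho}$, the substitution $x\mapsto xM$ yields $\C'=\{yMG^{\rho}A\colon y\in\F_{q^m}^k\}=\{wG^{\rho}A\colon w\in\F_{q^m}^k\}=\C^{\rho}A$, so $\C$ and $\C'$ are semilinearly equivalent.

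I expect no genuine obstacle beyond careful bookkeeping: one must stay consistent about whether the automorphism $\rho$ is applied before or after the linear part of $\varphi$, and about the side on which each group acts---$\mathrm{GL}(n,q)$ on columns, preserving $U$, versus $\mathrm{GL}(k,q^m)$ on rows, changing the generator matrix but not the code. Once the dictionary is set up cleanly, both implications are immediate and no quantitative estimate or structural result beyond the correspondence of Theorem \ref{th:connection} is required.
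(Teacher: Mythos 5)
Your proof is correct. Note that the paper itself gives no proof of this statement --- it is quoted from \cite{John} and \cite{JohnGeertrui} --- so there is nothing internal to compare against; your ``dictionary'' argument (left multiplication by $\mathrm{GL}(k,q^m)$ changes the system but not the code, right multiplication by $\mathrm{GL}(n,q)$ changes the code but not the system, and entrywise $\rho$ sends $G$ to a generator matrix of $\C^{\rho}$ with system $U^{\rho}$) is precisely the standard proof from those references, and every step is sound, including the implicit use of the fact that any $\rho\in\Aut(\F_{q^m})$ fixes $\F_q$ setwise, so that semilinear maps carry $\F_q$-bases to $\F_q$-bases. One wording slip in the converse direction: the columns of \emph{any} generator matrix of $\C'$ do not form an $\F_q$-basis of $U'$ --- that would contradict your own dictionary, since replacing $G'$ by $NG'$ with $N\in\mathrm{GL}(k,q^m)$ replaces the column span by $N(U')$. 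What you need, and what holds by the definition of ``associated'', is that \emph{some} generator matrix $G'$ of $\C'$ has column span exactly $U'$; choosing that particular $G'$, the identity $G'=MG^{\rho}A$ and the rest of your argument go through unchanged, so this is a local fix rather than a gap.
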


\subsection{$2$-dimensional linear rank metric codes}

Let $U$ be an $\fq$-subspace of $\F_{q^n}^2$ of dimension $n$ such that $\langle U \rangle_{\F_{q^n}}=\F_{q^n}^2$. Then $U$ is an $[n,2,d]_{q^n/q}$ system where $d=n-\max \{ w_{L_U}(P) \colon P \in \mathrm{PG}(1,q^n) \}$.
Up to the action of $\mathrm{GL}(2,q^n)$, as already said in Section \ref{sec:linset}, we can assume that 
\[ U=\{(x,f(x)) \colon x \in \mathbb{F}_{q^n}\}, \]
for some $f \in \mathcal{L}_{n,q}$. Then a code associated with $U$ is the code $\mathcal{C}_f$ whose generator matrix is
\[ G=\left(
\begin{array}{cccc}
    \xi_1 &   \xi_2 & \cdots & \xi_n \\
    f(\xi_1) & f(\xi_2) & \cdots & f(\xi_n)
\end{array}
\right),
\]
where $\xi_1,\ldots,\xi_n$ is an $\fq$-basis of $\fqn$.
This means that a code associated with $U$ is given by the evaluation of the polynomials in $\langle x, f(x) \rangle_{\fqn}$.

Let determine the weight distribution of the codes associated with $i$-clubs.

\begin{proposition}\label{prop:codefromiclub}
Let $U$ be an $\fq$-subspace of $\F_{q^n}^2$ of dimension $n$ such that $\langle U \rangle_{\F_{q^n}}=\F_{q^n}^2$.
If $L_U$ is an $i$-club, then the weight distribution of an associated code is the following: 
\begin{itemize}
    \item $q^n-1$ codewords of weight $n-i$;
    \item $(q^n-1)(q^{n-1}+\ldots+q^i)$ of weight $n-1$;
    \item $(q^n-1)(q^n-q^{n-1}-\ldots-q^i)$ of weight $n$.
\end{itemize}  
Conversely, let $\C$ be a linear $[n,2,n-i]_{q^n/q}$ non-degenerate rank metric code with $q^n-1$ codewords of weight $n-i$ and all the remaining nonzero codewords having weight either $n-1$ or $n$, then any associate system with $\mathcal{C}$ defines an $i$-club.
\end{proposition}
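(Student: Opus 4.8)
The plan is to read everything off the weight formula in Theorem~\ref{th:connection}. Since $k=2$, an $\fqn$-hyperplane of $\fqn^2$ is a $1$-dimensional subspace, so for a nonzero $x\in\fqn^2$ the space $x^\perp$ is a point of $\PG(1,q^n)$ and $\dim_{\fq}(U\cap x^\perp)=w_{L_U}(\langle x^\perp\rangle)$. By \eqref{eq:relweight} the codeword $xG$ then has weight $n-w_{L_U}(\langle x^\perp\rangle)$. Two facts organise the count: the rank weight is invariant under scaling $x\mapsto \lambda x$ with $\lambda\in\fqn^*$, so the $q^{2n}-1$ nonzero codewords split into the $q^n+1$ classes indexed by the points $\langle x\rangle$ of $\PG(1,q^n)$, each class of size $q^n-1$ and of constant weight; and the polarity $\langle x\rangle\mapsto\langle x^\perp\rangle$ is a bijection on the points of $\PG(1,q^n)$. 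Hence, for each $w$, the number of points $\langle x\rangle$ for which $xG$ has weight $n-w$ equals the number of points of weight $w$ in $L_U$.

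For the direct statement I would first record the weight distribution of an $i$-club of rank $n$. From \eqref{eq:pesivett} with a single point of weight $i$ one gets exactly $q^{n-1}+\dots+q^i$ points of weight $1$, one point of weight $i$, and the remaining $q^n-(q^{n-1}+\dots+q^i)$ points (out of the $q^n+1$ points of $\PG(1,q^n)$) of weight $0$. Multiplying each of these three counts by the class size $q^n-1$ and using the translation above yields exactly the claimed numbers of codewords of weights $n-i$, $n-1$ and $n$; as a sanity check the three terms sum to $(q^n-1)(q^n+1)=q^{2n}-1$.

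For the converse I would run the same dictionary backwards. Fix any generator matrix $G$ of $\C$ and let $U$ be the $\fq$-span of its columns; non-degeneracy gives $\dim_{\fq}U=n$ and $\langle U\rangle_{\fqn}=\fqn^2$, so $L_U$ is a linear set of rank $n$. The hypothesis of exactly $q^n-1$ codewords of weight $n-i$ forces, after dividing by the class size $q^n-1$, exactly one point of weight $i$ in $L_U$; since all other nonzero codewords have weight $n-1$ or $n$, every remaining point has weight $1$ or $0$. As $d=n-i<n-1$ forces $i\geq 2$, this is precisely the definition of an $i$-club, and the conclusion is independent of the chosen $G$ because all systems associated with $\C$ are $\mathrm{GL}(2,q^n)$-equivalent (Theorem~\ref{th:semilinearequivalence}) and being an $i$-club is preserved under this action.

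The computations here are entirely routine; the only point that needs care, and the place where a careless argument could go wrong, is keeping the two roles of a point separate---the point $\langle x\rangle$ that indexes a class of codewords versus the point $\langle x^\perp\rangle$ at which the linear-set weight is measured---together with the bookkeeping of the factor $q^n-1$ coming from scalar multiples. Once the polarity and this factor are pinned down, both directions are immediate.
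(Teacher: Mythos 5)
Your proof is correct and follows essentially the same route as the paper's (much terser) proof: both reduce everything to Theorem~\ref{th:connection}, translating codeword weights $n-w$ into point weights $w$ of $L_U$ and using the weight distribution of an $i$-club (one point of weight $i$, $q^{n-1}+\ldots+q^i$ points of weight one). Your write-up merely makes explicit the bookkeeping — the $q^n-1$ scalar classes, the polarity $\langle x\rangle\mapsto\langle x^\perp\rangle$, and the well-definedness in the converse — that the paper leaves implicit.
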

\begin{proof}
The proof follows by Theorem \ref{th:connection} and from the fact that an $i$-club has size $q^{n-1}+\ldots+q^i+1$ points, one point of weight $i$ and all the remaining points have weight one.
\end{proof}

Maximum rank distance codes of dimension $2$ are exactly those that have only codewords of weight $n-1$ and $n$. However, the codes associated with $i$-clubs are very \emph{close} to them and for this reason they are certainly of interest. Also, they are codes for which the nonzero weights are only three, see \cite{PSSZ202x}.

Therefore, we call the examples of rank metric codes as in Proposition \ref{prop:codefromiclub} \emph{$i$-club rank metric codes}. 
We list now the examples of $i$-club rank metric codes using the polynomial description we found in Section \ref{sec:polsoficlub}.

\smallskip

\textbf{Examples of $(n-1)$-club rank metric codes.}\\
Up to equivalence, they are all of the form $\mathcal{C}_f$ with $f(x)=\mathrm{Tr}_{q^n/q}(x)$.
This is a consequence of \cite[Theorem 3.7]{CsMP}.

\smallskip

\textbf{Examples of $(n-2)$-club rank metric codes.}\\
Up to equivalence, they are all of the form $\mathcal{C}_f$ with $f(x)=\mathrm{Tr}_{q^n/q}(\xi_{n-2}x)+b\mathrm{Tr}_{q^n/q}(\xi_{n-1}x)$, where $n=t(\ell+1)$ and there exist $a,b,c, \omega \in \F_{q^n}^*$, $\overline{S} \subseteq \F_{q^n}$ such that:
    \begin{itemize}
        \item $\F_q(b)=\F_{q^t}$, with $1 \leq t \leq n$,
        \item $\overline{S}$
        $\F_{q^t}$-subspace of dimension $\ell$ such that $c\F_{q^t} \cap \overline{S}=\{0\}$,
        \item $a \notin \overline{S}\oplus c\langle 1,b,\ldots,b^{t-2} \rangle_{\F_q} $,
        \item $(s_1,\ldots,s_{\ell t},c,cb,\ldots,cb^{t-3},1+\omega, a+\omega b)$ is an ordered $\fq$-basis and $(\xi_0,\ldots,\xi_{n-1})$ is its dual basis.
    \end{itemize} 
This follows by Theorem \ref{th:(n-2)clubpoldescript}.

\smallskip

\textbf{Examples of $i$-club rank metric codes.}\\
Let $n=rt$ with $t,r>1$.
Examples of $i$-club rank metric codes are the code $\mathcal{C}_f$ with $f(x)=\mathrm{Tr}_{q^n/q^t}(g(x)-x)$, where $g(x)\in \mathcal{L}_{t,q}$ is a scattered $q$-polynomial.
This follows by Theorem \ref{th:polscattclub}.

\subsection{$3$-dimensional linear rank metric codes with minimum distance $1$}

As a consequence on the classification of the linear R\'edei blocking sets of Theorem \ref{th:classRedeibs} we also obtain the following classification of rank metric codes with the following properties: let $\C$ be a linear $[n+1,3,1]_{q^n/q}$ non-degenerate rank metric code with at least a codeword of weight $n-1$.
Let $U$ be any system associated with $\C$ and let $W$ be a $2$-dimensional $\F_{q^n}$-subspace such that $\dim_{\fqn}(\langle U\cap W\rangle_{\fqn})=2$, then we say that $U$ is \emph{$q$-non-degenerate}.

\begin{theorem}
Let $\C$ be a linear $[n+1,3,1]_{q^n/q}$ $q$-non-degenerate rank metric code. Then any associated system with $\C$ is a linear R\'edei type blocking set in $\PG(2,q^n)$. Then
\begin{itemize}
    \item If $\C$ has more than $q^n-1$ codewords of weight one, then $\C$ is semilinearly equivalent to the code generated by 
\[ G=\left(
\begin{array}{ccccc}
    \xi_1 &   \xi_2 & \cdots & \xi_n & 0 \\
    \mathrm{Tr}_{q^n/q}(\xi_1) & \mathrm{Tr}_{q^n/q}(\xi_2) & \cdots & \mathrm{Tr}_{q^n/q}(\xi_n) & 0\\
    0 & 0 & \cdots & 0 & 1
\end{array}
\right),
\]
where $\xi_1,\ldots,\xi_n$ is an $\fq$-basis of $\fqn$.
\item If $n=4$, then $\C$ is semilinearly equivalent to the code generated by 
\[ G=\left(
\begin{array}{ccccc}
    \xi_1 &   \xi_2 & \cdots & \xi_n & 0 \\
    f(\xi_1) & f(\xi_2) & \cdots & f(\xi_n) & 0\\
    0 & 0 & \cdots & 0 & 1
\end{array}
\right),
\]
where $\xi_1,\ldots,\xi_n$ is an $\fq$-basis of $\fqn$ and $f$ is one of the polynomial described in \cite[Theorem A.3]{NPSZ2022complwei}.
\item If $\C$ has exactly $q^n-1$ codewords of weight one, $(q^n-1)q^2$ codewords of weight two and all the remaining nonzero codewords have weight greater than or equal to $n$, then $\C$ is semilinearly equivalent to the code generated by 
\[ G=\left(
\begin{array}{ccccc}
    \xi_1 &   \xi_2 & \cdots & \xi_n & 0 \\
    f(\xi_1) & f(\xi_2) & \cdots & f(\xi_n) & 0\\
    0 & 0 & \cdots & 0 & 1
\end{array}
\right),
\]
where $\xi_1,\ldots,\xi_n$ is an $\fq$-basis of $\fqn$ and $f$ is as described in Theorem \ref{th:(n-2)clubpoldescript}.
\end{itemize}
\end{theorem}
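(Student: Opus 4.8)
The plan is to pass through the code--system dictionary and to reduce the whole statement to the structure of the $\fq$-linear set induced on the R\'edei line. By Theorem~\ref{th:connection} an associated system $U$ is an $\fq$-subspace of $\fqn^3$ of dimension $n+1$ with $\langle U\rangle_{\fqn}=\fqn^3$, so $L_U$ is an $\fq$-linear set of rank $n+1$ in $\PG(2,q^n)$, that is, an $\fq$-linear blocking set. The hypothesis $d=1$ means, through $w(xG)=(n+1)-\dim_{\fq}(U\cap x^{\perp})$, that $\max_H\dim_{\fq}(U\cap H)=n$ over the $\fqn$-hyperplanes $H$; hence there is a line $\ell_\infty$ with $w_{L_U}(\ell_\infty)=n$, and $L_U$ is of R\'edei type with R\'edei line $\ell_\infty$. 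Writing $\overline U=U\cap H_{\ell_\infty}$ (of $\fq$-dimension $n$), the $q$-non-degeneracy, together with the presence of a codeword of weight $n-1$ (a line of weight $2$), guarantees that $\langle\overline U\rangle_{\fqn}=H_{\ell_\infty}$ and that $L_{\overline U}$ is a genuine $\fq$-linear set of rank $n$ on $\ell_\infty\cong\PG(1,q^n)$, not concentrated in a single heavy point. This already yields the first assertion.

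First I would translate the weight enumerator of $\C$ into the weight distribution of $L_{\overline U}$. Under $w(xG)=(n+1)-w_{L_U}(x^{\perp})$, codewords of weight $w$ correspond to lines of weight $(n+1)-w$, and each line of a fixed weight accounts for exactly $q^n-1$ codewords (the nonzero scalar multiples of a fixed functional). In particular weight-one codewords correspond to lines of weight $n$, i.e.\ to R\'edei lines. Since the R\'edei line of a strictly $\fq$-linear blocking set is unique when $L_{\overline U}$ is not an $(n-1)$-club, the contrapositive shows that strictly more than $q^n-1$ codewords of weight one forces at least two R\'edei lines, hence $L_{\overline U}$ is an $(n-1)$-club and therefore $\mathrm{P\Gamma L}(2,q^n)$-equivalent to $L_{\mathrm{Tr}_{q^n/q}}$ by \cite[Theorem 3.7]{CsMP}. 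Taking $U=\{(x,\mathrm{Tr}_{q^n/q}(x),0)\colon x\in\fqn\}\oplus\langle(0,0,1)\rangle_{\fq}$ and evaluating at an $\fq$-basis $\xi_1,\dots,\xi_n$ reproduces exactly the displayed trace generator matrix, and Theorem~\ref{th:semilinearequivalence} upgrades this equivalence of systems to a semilinear equivalence of codes. This settles the first item.

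For the remaining items we are in the regime of exactly $q^n-1$ codewords of weight one, so the R\'edei line is unique and $L_{\overline U}$ is a strictly $\fq$-linear set of rank $n$ that is not an $(n-1)$-club. The decisive computation, which is the planar refinement of Remark~\ref{rem:Redeiblocksets}, is that a point of weight $j$ of $L_{\overline U}$ produces exactly $q^{\,n-j}$ lines of weight $j+1$ through it in $L_U$; reading this backwards, the weight-two codeword count $(q^n-1)q^2$ (that is, $q^2$ lines of weight $n-1$) together with the constraint on the remaining weights forces $L_{\overline U}$ to carry a single point of weight $n-2$ and all others of weight one, i.e.\ to be an $(n-2)$-club. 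For $n\geq 5$ I would then invoke Corollary~\ref{cor:classn-2club} and its polynomial form Theorem~\ref{th:(n-2)clubpoldescript} to write $L_{\overline U}=L_p$ with $p(x)=\mathrm{Tr}_{q^n/q}(\xi x)+b\,\mathrm{Tr}_{q^n/q}(\eta x)$, and build $U=\{(x,p(x),0)\colon x\in\fqn\}\oplus\langle(0,0,1)\rangle_{\fq}$ to obtain the generator matrix of the last item. The value $n=4$ must be isolated because Theorem~\ref{th:(n-2)clubpoldescript} assumes $n\geq 5$: there $L_{\overline U}$ is a $2$-club in $\PG(1,q^4)$, whose defining polynomials are precisely those listed in \cite[Theorem A.3]{NPSZ2022complwei}, producing the corresponding $f$ and the middle generator matrix. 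In every case Theorem~\ref{th:semilinearequivalence} converts the $\GammaL(3,q^n)$-equivalence of associated systems into semilinear equivalence of codes.

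The main obstacle is the faithful inversion of the weight data: one must show that the prescribed codeword counts determine the \emph{entire} weight distribution of $L_{\overline U}$ on $\ell_\infty$ --- a unique point of weight $n-2$, or the trace pattern --- rather than merely constraining a few weights, and in particular one must exclude rank-$n$ linear sets carrying several points of intermediate weight, each of which would contribute its own families of lines and hence spurious codeword weights. This demands the exact bookkeeping of how each point weight on $\ell_\infty$ contributes lines of each weight to $L_U$, and then a clean reading of that correspondence in reverse. A secondary technical point is the separate handling of $n=4$, where the general $(n-2)$-club classification of Section~\ref{sec:classn-2} is unavailable and one must fall back on the explicit $n=4$ polynomial list.
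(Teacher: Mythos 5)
Your overall architecture (code $\to$ system $\to$ R\'edei blocking set $\to$ classification of the club on the R\'edei line $\to$ back to codes) is the paper's, but there is a genuine gap at the final step of every bullet: you pass from $\mathrm{P\Gamma L}$-equivalence of \emph{linear sets} to $\GammaL(3,q^n)$-equivalence of \emph{systems} as if they were the same thing, and they are not. Theorem~\ref{th:semilinearequivalence} requires a semilinear map carrying the $\fq$-subspace $U$ onto $U'$; knowing that $L_{\overline{U}}$ and $L_{\mathrm{Tr}_{q^n/q}}$ (or $L_p$) coincide as point sets after a collineation does not produce such a map, because inequivalent subspaces can define one and the same linear set, and because even a map matching the two clubs on the line must still be shown to behave correctly on the complementary vector $v$ with $U=\overline{U}\oplus\langle v\rangle_{\fq}$ (the blocking set $L_{\overline{U}\oplus\langle v\rangle_{\fq}}$ depends on the subspace $\overline{U}$, not only on the point set $L_{\overline{U}}$). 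This is precisely what Lemma~\ref{lem:equivBoPol} is for: a blocking set with a $(q+1)$-secant determines its defining subspace up to a scalar of $\fqn^*$, so point-set equivalence of two such blocking sets upgrades to $\GammaL(3,q^n)$-equivalence of the subspaces. The paper invokes this lemma in each of the three cases, and this is also the real purpose of the $q$-non-degeneracy hypothesis (it supplies the $(q+1)$-secant that makes the lemma applicable), whereas in your argument $q$-non-degeneracy is only used to ensure that $\overline{U}$ spans the R\'edei line. Without this ingredient your proof does not reach semilinear equivalence of the codes.

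Two secondary remarks. For the first bullet you take a different route from the paper: you use the uniqueness of the R\'edei line plus \cite[Theorem 3.7]{CsMP}, while the paper uses the codeword of weight $n-1$ together with \cite[Theorem 4.1]{DeBeuleVdV} to bound the size from below and then the classification of R\'edei blocking sets with at least two R\'edei lines of \cite[Theorem 5]{LunPol2000}. Your route is workable in spirit, but the uniqueness statement only holds for \emph{strictly} $\fq$-linear sets that are not $(n-1)$-clubs, so from two R\'edei lines you may only conclude ``$(n-1)$-club \emph{or} not strictly $\fq$-linear''; you must explicitly rule out the second alternative (the $(q+1)$-secant does this, since an $\F_{q^t}$-linear set with $t>1$ meets every line in $1$ or at least $q^t+1$ points), and you never do. For the third bullet, by contrast, your point--line weight bookkeeping is more detailed than the paper's one-line justification and is consistent with Remark~\ref{rem:Redeiblocksets}; the deduction that the club structure follows from the prescribed codeword counts is the part the paper leaves implicit.
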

\begin{proof}
Let $U$ be any associated system with $\C$. Then $\dim_{\fq}(U)=n+1$ and, since the minimum distance is one, by Theorem \ref{th:connection} there exists at least one $\fqn$-subspace of dimension $2$ such that $\dim_{\fq}(U\cap W)=n$. Therefore, $L_U$ is a linear blocking set of R\'edei type with $\ell=\PG(W,\fqn)$ an its R\'edei line. 
Moreover, since $\C$ is $q$-non-degenerate, then there exists $\overline{W}$ $\fqn$-subspace of dimension $2$ such that $\dim_{\fq}(U\cap \overline{W})=2$ and $\dim_{\fqn}(\langle U\cap \overline{W}\rangle_{\fqn})=2$, that is $L_U$ has a $(q+1)$-secant line.
\begin{itemize}
    \item If $\C$ has more than $q^n-1$ codewords of weight one, then there is more than one R\'edei line to $L_U$. Moreover, since there exists a codeword of weight $n-1$. By \cite[Theorem 4.1]{DeBeuleVdV} we have that $L_U$ has size at least $q^n+q^{n-1}+1$.
    In \cite[Theorem 5]{LunPol2000}, it has been proved that an $\fq$-linear blocking set in $\PG(2,q^n)$ of R\'edei type having size at least $q^n+q^{n-1}+1$ and with at least two R\'edei lines is $\mathrm{PGL}(2,q^n)$-equivalent to $L_U'$, where
    \[ U'=U_{\mathrm{Tr}_{q^n/q}}\oplus \langle (0,0,1) \rangle_{\fq}. \]
    By Lemma \ref{lem:equivBoPol}, then $U$ and $U'$ are $\mathrm{GL}(3,q^n)$-equivalent and hence the assertion.
    \item This point follows from the classification result \cite[Section 4]{BoPol} and from Lemma \ref{lem:equivBoPol} and Theorem \ref{th:semilinearequivalence}.
    \item Because of the conditions on the weight distribution of $\C$, we have that $L_{U\cap W}$ is an $(n-2)$-club contained in the line $\PG(W,\fqn)$. Therefore, the result follows from Theorem \ref{th:(n-2)clubpoldescript} and again from Lemma \ref{lem:equivBoPol} and Theorem \ref{th:semilinearequivalence}.
\end{itemize}
\end{proof}

\section{Conclusions and open problems}

In this paper we have investigated clubs of rank $n$ in $\PG(1,q^n)$.
We have been able to provide a classification result for $(n-2)$-clubs. Then we have analyzed the $\GammaL(2,q^n)$-equivalence of the known subspaces defining clubs, for some of them the problem is then translated in determining whether or not certain scattered spaces are equivalent. Then we have detected all the linearized polynomials defining the known families of clubs. Then we have applied our results to the theory of blocking sets, KM-arcs and rank metric codes.

Here, we list some open problems and questions that naturally arise:

\begin{itemize}
    \item To classify $i$-clubs for $i<n-2$. 
    \item Can a coding-theoretical approach be used  to the first problem?
    \item Recently De Boeck and Van de Voorde in \cite{DeBoeckVdV20} proved that $2$-clubs do not exist in $\PG(1,q^5)$. Can this result be extended to $\PG(1,q^n)$ for any value of $n\geq 5$?
    \item As for the blocking sets, are KM-arcs constructed from two $\GammaL(2,q^n)$-inequivalent clubs  $\mathrm{P\Gamma L}(3,q^n)$-inequivalent?
    \item Can we find other conditions on the parameters of a linear rank metric code in such a way that results on linear blocking sets can be still used to classify them?
    \item Similarly to what happens to the traditional KM-arcs, can linear sets be used to construct example of generalized KM-arcs introduced in \cite{CsWei}?
\end{itemize}

\section*{Acknowledgement}

The authors were supported by the project ``VALERE: VAnviteLli pEr la RicErca" of the University of Campania ``Luigi Vanvitelli'' and by the Italian National Group for Algebraic and Geometric Structures and their Applications (GNSAGA - INdAM).


\begin{thebibliography}{100}

\bibitem{BSZ2017}
{\sc C. Bachoc, O. Serra and G. Z\'emor:} An analogue of Vosper's theorem for extension fields, 
\emph{Math. Proc. Camb. Philos. Soc.}{\bf 163(3)} (2017), 423--452.

\bibitem{BSZ2015}
{\sc C. Bachoc, O. Serra and G. Z\'emor:}
Revisiting Kneser's theorem for field extensions,
\emph{Combinatorica} {\bf 38(4)} (2018), 759--777.

\bibitem{Ball}
{\sc S. Ball:}
The number of directions determined by a function over finite field,
\emph{J. Combin. Theory Ser. A} {\bf 104} (2003), 341--350.

\bibitem{BBBSSz}
{\sc S. Ball, A. Blokhuis, A. E. Brouwer, L. Storme and T. Sz\H{o}nyi:}
On the number of slopes of the graph of a function defined on a finite field,
\emph{J. Combin. Theory Ser. A} {\bf 86(1)} (1999), 187–-196.

\bibitem{BMZZ}
{\sc D. Bartoli, G. Micheli, G. Zini and F. Zullo:}
$r$-fat linearized polynomials over finite fields,
\emph{J. Combin. Theory Ser. A} {\bf 109} (2022), 105609.

\bibitem{berger2003isometries}
{\sc T.P. Berger:} 
Isometries for rank distance and permutation group of Gabidulin codes, 
\emph{IEEE Trans. Inform. Theory} {\bf 49(11)} (2003), 3016--3019.


\bibitem{BL2000}
{\sc A. Blokhuis and M. Lavrauw:}
Scattered spaces with respect to a spread in $\mathrm{PG}(n,q)$,
\emph{Geom.\ Dedicata} {\bf 81} (2000), 231--243.

\bibitem{surveybloset}
{\sc A. Blokhuis, P. Sziklai, and T. Sz\H{o}nyi:} 
Blocking sets in projective spaces, 
\emph{Current research topics in Galois geometry} (2011), 61--84.

\bibitem{BoPol}
{\sc G. Bonoli and O. Polverino:}
$\mathbb{F}_q$-linear blocking sets in $\mathrm{PG}(2,q^4)$,
{\it Innov. Incidence Geom.} {\bf 2} (2005), 35--56.

\bibitem{CsMPq5}
{\sc B. Csajb\'ok, G. Marino and O. Polverino:}
A Carlitz type result for linearized polynomials,
\emph{Ars Math. Contemp.} {\bf 16(2)} (2019), 585--608.

\bibitem{CsMP}
{\sc B. Csajb\'ok, G. Marino and O. Polverino:}
{Classes and equivalence of linear sets in $\mathrm{PG}(1,q^n)$},
\emph{J. Combin. Theory Ser. A }{\bf 157} (2018), 402--426.

\bibitem{CsWei}
{\sc B. Csajb\'ok and Zs. Weiner:}
Generalizing Korchm\'aros--Mazzocca arcs, \emph{Combinatorica} {\bf 41(5)} (2021), 601--623.

\bibitem{CsZ2018}
{\sc B. Csajb\'ok and C. Zanella:}
Maximum scattered $\F_q$-linear sets of $\PG(1,q^4)$,
\emph{Discrete Math.} {\bf 341} (2018), 74–-80.

\bibitem{DeBeuleVdV}
{\sc J. De Beule and G. Van de Voorde:}
The minimum size of a linear set,
{\it J.\ Combin.\ Theory Ser.\ A} {\bf 164} (2019), 109--124.

\bibitem{DeBoeckVdV2016}
{\sc M. De Boeck and G. Van de Voorde:}
A linear set view on KM-arcs,
\emph{J. Algebraic Combin.} {\bf 44(1)} (2016), 131--164.

\bibitem{DeBoeckVdV19}
{\sc M. De Boeck and G. Van de Voorde:}
Elation KM-arcs, 
\emph{Combinatorica} {\bf 39(3)} (2019), 501--544.

\bibitem{DeBoeckVdV20}
{\sc M. De Boeck and G. Van de Voorde:}
The weight distributions of linear sets in $\mathrm{PG}(1,q^5)$,
\emph{Finite Fields Appl.} {\bf 82} (2022), 102034.

\bibitem{Delsarte}
{\sc P. Delsarte:}
Bilinear forms over a finite field, with applications to coding theory,
{\it J.\ Combin.\ Theory Ser.\ A} {\bf 25} (1978), 226--241.

\bibitem{FanSzi1}
{\sc Sz. Fancsali and P. Sziklai:} 
About maximal partial $2$-spreads in PG$(3m-1,q)$, 
\emph{Innov. Incidence Geom.} {\bf 4(1)} (2006), 89--102.

\bibitem{FanSzi2}
{\sc Sz. Fancsali and P. Sziklai:} 
Description of the clubs, 
\emph{Annales Univ. Rolando Eötvös}, 2009.

\bibitem{GW2003}
{\sc A. G\'acs and Zs. Weiner:} 
On $(q+t)$-arcs of type $(0,2,t)$,
\emph{Des. Codes Cryptogr.} {\bf 29}(1–3) (2003), 131--139.

\bibitem{combbook}
{\sc Y.J. Ionin and M.S. Shrikhande:} 
Combinatorics of symmetric designs, 
Cambridge University Press, 2006.

\bibitem{KM}
{\sc G. Korchm\'aros and F. Mazzocca:}
On $(q+t)$-arcs of type $(0,2,t)$ in a desarguesian plane of order $q$,
\emph{Math. Proc. Camb. Philos. Soc.} {\bf 108(3)} (1990), 445--459.

\bibitem{LavVdV}
{\sc M. Lavrauw and G. Van de Voorde:} 
Field reduction and linear sets in finite geometry, 
In: Topics in Finite Fields, \emph{AMS Contemporary Math}, vol. 623, pp. 271-293. American Mathematical Society, Providence (2015).

\bibitem{lidl_finite_1997}
{\sc R.~Lidl and H.~Niederreiter:}
Finite fields, volume~20 of {\em Encyclopedia of Mathematics and its Applications},
Cambridge University Press, Cambridge, second edition, 1997.

\bibitem{LMTZ}
{\sc G. Longobardi, G. Marino, R. Trombetti and Y. Zhou:}
A large family of maximum scattered linear sets of $\mathrm{PG}(1,q^n)$ and their associated MRD codes,
\href{https://arxiv.org/abs/2102.08287}{arXiv:2102.08287}.

\bibitem{LZ2}
{\sc G. Longobardi and C. Zanella:}
Linear sets and MRD-codes arising from a class of scattered linearized polynomials, \emph{J. Algebr. Combin.} (2021), \href{https://doi.org/10.1007/s10801-020-01011-9}{https://doi.org/10.1007/s10801-020-01011-9}.

\bibitem{LunPol2000}
{\sc G. Lunardon and O. Polverino:} 
Blocking sets of size $q^t + q^{t-1} + 1$,
\emph{J. Combin. Theory Ser. A} {\bf 90} (2000), 148–-158.

\bibitem{valuesets}
{\sc G. Mullen and M. Zieve:} 
Value sets of polynomials, 
In: \emph{Handbook of finite fields}, CRC Press, Boca Raton, FL, 2013, pp. 232--235.

\bibitem{NPSZ2022minsize}
{\sc V. Napolitano, O. Polverino, P. Santonastaso and F. Zullo:}
Classifications and constructions of minimum size linear sets, 
\href{https://arxiv.org/pdf/2201.02003}{arXiv:2201.02003}.

\bibitem{NPSZ2022complwei}
{\sc V. Napolitano, O. Polverino, P. Santonastaso and F. Zullo:}
Linear sets on the projective line with complementary weights, 
\emph{Discrete Math.} {\bf 345(7)} (2022), 112890.

\bibitem{NPSZ2022codes}
{\sc V. Napolitano, O. Polverino, P. Santonastaso and F. Zullo:}
Two pointsets in $\mathrm {PG}(2, q^ n) $ and the associated codes, 
\emph{Adv. Math. Commun.} (2022).

\bibitem{NSZ}
{\sc A. Neri, P. Santonastaso and F. Zullo:}
Extending two families of maximum rank distance codes,
{\it Finite Fields Appl.} {\bf 81} (2022), 102045.

\bibitem{Polverino}
{\sc O. Polverino:}
Linear sets in finite projective spaces,
\emph{Discrete Math.} {\bf 310(22)} (2010), 3096--3107.

\bibitem{PSSZ202x}
{\sc O. Polverino, P. Santonastaso, J. Sheekey and F. Zullo:}
On the linearity of rank metric codes,
\emph{in preparation}.

\bibitem{PZ}
{\sc O. Polverino and F. Zullo:}
Connections between scattered linear sets and MRD-codes,
\emph{Bulletin of the ICA} {\bf 89} (2020), 46-74.

\bibitem{Randrianarisoa2020ageometric}
{\sc T.H. Randrianarisoa:}
A geometric approach to rank metric codes and a classification of constant weight codes, 
\emph{Des. Codes Cryptogr.} {\bf 88(7)} (2020), 1331--1348.

\bibitem{John}
{\sc J. Sheekey:}
A new family of linear maximum rank distance codes, 
\emph{Adv. Math. Commun.} {\bf 10}(3) (2016), 475--488.

\bibitem{sheekey_newest_preprint}
{\sc J. Sheekey:}
MRD codes: constructions and connections,
\emph{Combinatorics and finite fields: Difference sets, polynomials, pseudorandomness and applications}, Radon Series on Computational and
Applied Mathematics {\bf 23}, K.-U. Schmidt and A. Winterhof (eds.), De Gruyter (2019).

\bibitem{JohnGeertrui}
{\sc J. Sheekey and G. Van de Voorde:} 
Rank-metric codes, linear sets, and their duality,
\emph{Des. Codes Cryptogr.} {\bf 88(4)} (2020), 655--675.

\bibitem{AutLunPol}
{\sc W. Tang, Y. Zhou and F. Zullo:} 
On the automorphism groups of Lunardon-Polverino scattered linear sets, 
\href{https://arxiv.org/abs/2201.12777}{arXiv:2201.12777}.

\bibitem{wl}
{\sc B. Wu and Z. Liu:}
Linearized polynomials over finite fields revisited,
{\it Finite Fields Appl.} {\bf 22} (2013), 79--100.

\end{thebibliography}
\end{document}